\newtheoremstyle{mystyle}
{11pt}				% above space 
{11pt}				% below space 
{}					% body font 
{}					% indent amount 
{\bfseries}			% head font 
{}					% post head punctuation 
{5.5pt}				% space between head and body
{}					% head spec
\theoremstyle{mystyle}
\newtheorem{theorem}{Theorem}[section]
\newtheorem{definition}[theorem]{Definition}
\newtheorem{proposition}[theorem]{Proposition}
\newtheorem{corollary}[theorem]{Corollary}
\newtheorem{example}[theorem]{Example}
\renewenvironment{proof}[1][Proof.]{\vspace{-16.5pt} \begin{trivlist}
	\item[\hskip \labelsep {\bfseries #1}]}{\qed \end{trivlist}}
\appto\normalsize{
	\abovedisplayskip=5.5pt plus 2pt minus 2pt
	\belowdisplayskip=5.5pt plus 2pt minus 2pt
	\abovedisplayshortskip=5.5pt plus 2pt minus 2pt
	\belowdisplayshortskip=5.5pt plus 2pt minus 2pt}
\appto\small{
	\abovedisplayskip=5.5pt plus 2pt minus 2pt
	\belowdisplayskip=5.5pt plus 2pt minus 2pt
	\abovedisplayshortskip=5.5pt plus 2pt minus 2pt
	\belowdisplayshortskip=5.5pt plus 2pt minus 2pt}
\newcommand{\gap}{\vspace{11pt}}
\newcommand{\conv}{\operatorname{conv}}
\newcommand{\R}{\mathcal{R}}
\newcommand{\Rn}{\mathcal{R}^n}
\newcommand{\Rnp}{\mathcal{R}_+^n}
\newcommand{\Sn}{\mathcal{S}^n}
\newcommand{\Hn}{\mathcal{H}^n}
\newcommand{\V}{{\cal V}}
\newcommand{\Z}{{\cal Z}}
\newcommand{\W}{{\cal W}}
\newcommand{\G}{{\cal G}}
\newcommand{\barx}{\overline{x}}
\newcommand{\wg}{\widetilde{\gamma}}
\newcommand{\wl}{\widetilde{\lambda}}
\newcommand{\lchi}{\raisebox{2pt}{$\chi$}}
\title{\bf Optimizing certain combinations of  spectral  and  \\linear/distance
 functions over    spectral sets }
\author{
 M. Seetharama Gowda\\
        Department of Mathematics and Statistics\\
        University of Maryland, Baltimore County\\
        Baltimore, Maryland 21250, USA\\
        gowda@umbc.edu}
\date{\today}
\begin{document}

\maketitle

\begin{abstract}
In the settings  of   Euclidean Jordan algebras, normal decomposition systems (or Eaton triples), and  structures 
induced by complete isometric hyperbolic polynomials, 
we consider the problem of  optimizing   a certain combination (such as  the sum) of  spectral and  
linear/distance  functions over a  spectral set. To present a unified theory, 
we introduce a new system called Fan-Theobald-von Neumann system which is a triple $(\V,\W,\lambda)$, 
where $\V$ and $\W$ are  real inner product spaces and $\lambda:\V\rightarrow \W$ is a norm preserving map 
satisfying a Fan-Theobald-von Neumann type inequality together with a condition for equality. 
In this general setting, we show that optimizing  a certain combination of   
spectral  and   linear/distance
 functions over a set of the form $E=\lambda^{-1}(Q)$ in $\V$, where $Q$ is a subset of $\W$,  is equivalent to optimizing a corresponding combination over the set $\lambda(E)$ and 
relate the attainment of the optimal value to a commutativity concept. We also study related results for convex functions in place of linear/distance 
functions. 
Particular instances include the classical results of Fan and  Theobald,  von Neumann,
results of Tam, Lewis, and Bauschke et al., and recent results of Ram\'{i}rez et al.
As an application, we present 
a commutation principle for variational inequality problems over such a system.
\end{abstract}

\vspace{1cm}
\noindent{\bf Key Words}: Fan-Theobald-von Neumann system, 
Euclidean Jordan algebra, normal decomposition system, Eaton triple, hyperbolic polynomial, 
spectral set, eigenvalue map, strong operator commutativity, variational inequality problem 
\\

%\begin{AMS}
\noindent{\bf AMS Subject Classification:}  
15A27, 17C20, 46N10, 52A41, 90C25, 90C33.
%\end{AMS}
\newpage

%----------------------------------------------------------------------------------------

\section{Introduction}
Let $\V$ and $\W$ be two real inner product spaces, $\lambda:\V\rightarrow \W$ be  a map, 
$Q$ be a subset of $\W$, and $E:=\lambda^{-1}(Q)$. In analogy with certain concepts in Euclidean Jordan algebras, 
we say that   $\lambda$  is an eigenvalue map and $E$ is a  spectral set; a function $\Phi:\V\rightarrow \R$ is said to 
be a  spectral function if it is of the form $\Phi=\phi\,\circ \lambda$ for some function $\phi:\W\rightarrow \R$. 
Given a real valued function on  $\V$, we are interested in reformulating the problem of optimizing (that is, finding the  
infimum/supremum/minimum/maximum of) this function  over  $E$
equivalently as a problem of optimizing a related function  over $\lambda(E)$ 
with the expectation that the latter problem is relatively easy to solve and/or gives some information on the former problem. For example, if $E$ is a spectral set and $\Phi$ is a spectral function on $\V$, then a mere change of 
variable will show that such a reformulation holds.
The main objective of this paper is to show that in some settings, such a reformulation can be carried out for
certain combinations of  $\Phi$ and linear/distance/convex functions. To elaborate, let $c\in \V$ and consider the 
linear function $f(x):=\langle c,x\rangle$ and the distance function $g(x):=||c-x||$ over $\V$. Define the corresponding 
linear and distance functions over $\W$:  $f^*(w)=\langle \lambda(c),w\rangle$ and
$g_*(w)=||\lambda(c)-w||.$
Then,  under certain conditions on the triple $(\V,\W,\lambda)$ -- thus defining a new system  called 
Fan-Theobald-von Neumann system -- 
we will show that 
\begin{equation}\label{basic identities}
\sup_{E}\,(f+\Phi)=\sup_{\lambda(E)}\,(f^*+\phi)\quad \mbox{and}\quad \inf_{E} (g+\Phi)=\inf_{\lambda(E)}(g_*+\phi),
\end{equation}
with similar statements where `supremum' is replaced by `maximum' and `infimum' by `minimum'.
Additionally, 
we relate the attainment in each of these problems to a  condition of the form
$$\langle c,x\rangle=\langle \lambda(c),\lambda(x)\rangle, $$
thus defining the concept of {\it commutativity} in this system. We also handle problems of the 
form $\inf_{E} (f+\Phi)$ and  $\sup_{E}(g+\Phi)$ in a similar way and  consider replacing the sum by other combinations, and replacing linear and distance functions by convex functions.   By specializing, we show that all these hold in 
Euclidean Jordan algebras \cite{faraut-koranyi}, normal decomposition systems \cite{lewis} (in particular, Eaton 
triples \cite{eaton-perlman}), and certain structures induced by complete hyperbolic polynomials \cite{bauschke et al}.
\\ 
As a simple illustration,   consider the following  optimization problem stated in the setting of  $\Sn$, the 
(Euclidean Jordan) algebra of  all $n\times n$ real symmetric matrices:
for a given $C\in \Sn$ ($n\geq 2$), find 
$$\max\big \{ \langle C,X\rangle: X\succeq 0,\, 1\leq \lambda_{max}(X)\leq 2\Big \},$$
where $X\succeq 0$ means that $X$  is positive semidefinite and $\lambda_{max}(X)$ is the maximum eigenvalue of $X$. 
Here, the objective function is linear and the constraint set (defined by eigenvalues) is nonconvex.
This problem turns out to be equivalent to finding $$\max\big \{ \langle
\lambda(C),q\rangle: q=(q_1,q_2,\ldots,q_n)\in \Rnp: q_1\geq q_2\geq \cdots\geq q_n,\, 1\leq q_1\leq 2\Big \},$$
where $\lambda(C)$ is the vector of eigenvalues of $C$ written in the decreasing order.
Clearly, the latter problem, stated in $\Rn$, is easier to solve than the former.
In this reformulation, $\V=\Sn$, $\W=\Rn$, $\lambda:\V\rightarrow \Rn$ is the eigenvalue map that takes any real symmetric matrix  
to its vector of eigenvalues written in the decreasing order, $E=\{X\in \Sn: X\succeq 0,\,1\leq \lambda_{max}(X)\leq 2\}$, 
and $\Phi=0$. Another illustrative example in the setting of $\Sn$ 
is the problem $\inf_{E}(f+\Phi)$, where $f(X)=\langle C,X\rangle$, $\Phi(X)=-\log\,\det(X)$, and 
$E$ is an appropriate spectral set.\\

A Fan-Theobald-von Neumann system (FTvN system, for short) introduced in this paper  
is a triple $(\V,\W,\lambda)$, where $\V$ and $\W$ are  real inner product spaces and 
$\lambda:\V\rightarrow \W$ is a norm preserving map satisfying the
property 
\begin{equation} \label{intro ftvn}
\max\Big \{\langle c,x\rangle:\,x\in [u]\Big \}=\langle \lambda(c),\lambda(u)\rangle\quad (c,u\in \V),
\end{equation}
with $[u]:=\lambda^{-1}(\{\lambda(u)\})$ denoting the (so-called)  $\lambda$-orbit of $u$,  
 see Section 2 for  an elaborated version   and a formulation in terms of the distance function.
The above property can be regarded as a combination of   Fan-Theobald-von Neumann type inequality, namely,
$\langle c,x\rangle\leq \langle \lambda(c),\lambda(x)\rangle$, and a ``commutativity" condition for equality.
Perhaps, the simplest nontrivial example of such a system is the triple $(\V,\R,\lambda)$, where $\V$ is a real inner product space and 
$\lambda$ denotes the corresponding norm; in this case, the defining property reduces to the Cauchy-Schwarz 
inequality together  with a condition for equality. When $\V$ is a Euclidean Jordan algebra of rank $n$ carrying the 
trace inner product with $\lambda:\V\rightarrow \Rn$ denoting the eigenvalue map, the triple $(\V,\Rn,\lambda)$ 
becomes a FTvN system. More generally, if $\V$ is a finite dimensional real vector space and $p$ is a real 
homogeneous polynomial of degree $n$ on $\V$ that is  hyperbolic with respect to an element $e\in \V$, complete, and isometric 
\cite{bauschke et al}, then $(\V,\Rn,\lambda)$ becomes a FTvN system, where for any element $x$ in $\V$, $\lambda(x)$ denotes the vector of 
roots of the univariate polynomial $t\rightarrow p(te-x)$ written in the decreasing order. Also, when 
$(\V,\G,\gamma)$ is a normal decomposition system (in particular, an Eaton triple) with $\G$ denoting a closed subgroup of the orthogonal group of a real inner product space $\V$ and $\gamma:\V\rightarrow \V$ satisfying some specified properties, the triple
$(\V,\W,\gamma)$ becomes a FTvN system, where $\W:=\mbox{span}(\gamma(\V))$. 
\\

The motivation for our work comes from  several results mentioned below.
\begin{itemize}
\item [$\bullet$]  
For two $n\times n$ complex Hermitian matrices $C$ and $A$ with eigenvalues 
$c_1\geq c_2\geq \cdots \geq c_n$ and $a_1\geq a_2\geq \cdots\geq a_n$, a classical result of Fan \cite{fan} (see also \cite{theobald,tam}) states that
$$\max \Big \{\mbox{tr}(CUAU^*):\,U\in {\cal C}^{n\times n} \,\,\mbox{is unitary} \Big\}= \sum_{i=1}^{n}c_ia_i,$$
where `tr' refers to the trace.
By working in the Euclidean Jordan algebra $\Hn$ (of all $n\times n$ complex Hermitian matrices) with $\langle X,Y\rangle=tr(XY)$ and $\lambda(X)$ denoting the eigenvalues of $X$ written in the decreasing order, this result can be viewed as 
describing the maximum of the linear function $f(X):=\langle C,X\rangle$ over the eigenvalue orbit 
$E=\lambda^{-1}(\{\lambda(A)\})=\{X\in \Hn: \lambda(X)=\lambda(A)\}$ with the optimal value given by $\langle \lambda(C),\lambda(A)\rangle$. Furthermore, the attainment is described in the form of simultaneous order eigenvalue decomposition.  
\item [$\bullet$]
A result of  von Neumann  
deals with two $n\times n$ complex matrices $C$ and $A$ with singular values $c_1\geq c_2\geq \cdots \geq c_n$ and $a_1\geq a_2\geq \cdots\geq a_n$. It asserts that  
$$\max\Big \{\mbox{Re\,tr}(CUA^*V): \,U,V\in {\cal C}^{n\times n} \,\,\mbox{are unitary}\Big \}= \sum_{i=1}^{n}c_ia_i.$$ 
By working in the normal decomposition system (Eaton triple) corresponding to the space $M_n$ (of all $n\times n$ complex matrices) with $\lambda(X)$  denoting the singular values of $X$ written in the decreasing order, and $\langle X,Y\rangle=\mbox{Re\,tr}(XY^*)$, we can view this result as 
describing the maximum of the linear function $f(X):=\langle C,X\rangle$ over the singular value orbit 
$E=\lambda^{-1}(\{\lambda(A)\})=\{X\in M_n:\lambda(X)=\lambda(A)\}$ with the optimal value given by 
$\langle \lambda(C),\lambda(A)\rangle$. Here, the attainment is described in the form of simultaneous order singular
 decomposition.

\item [$\bullet$]
In \cite{chu-driessel}, Chu and Driessel considered the problems of minimizing the distance function $||C-X||$ over the eigenvalue orbit of a 
matrix $A$ in $\Hn$ and over the  singular value orbit of a matrix $A$ in $M_n$.
	Their results were refined by Tam (\cite{tam}, Corollaries 2.2 and 2.3), Tam and Hill (\cite{tam-hill}, Theorem 27) by working in the setting of Lie algebras/Eaton triples.  
Related works 
\cite{li-tsing,holmes-tam} deal with  distance to the convex hull of eigenvalue/singular value orbits. 

\item [$\bullet$] In the setting of a  normal decomposition system $(\V,\G,\gamma)$, Lewis (\cite{lewis}, Proposition 2.3 and Theorem 2.4]) describes the property
        $$ \max_{A \in \G}\, \langle Ax,y\rangle = \langle \gamma(x),\gamma(y)\rangle $$
with a condition for equality: $\langle x,y\rangle = \langle \gamma(x),\gamma(y)\rangle $
if and only if there exists an $A \in \G$ such that $x = A\gamma(x)$ and $y = A\gamma(y)$.
This property can be viewed as a statement on maximizing a linear function over an orbit of the form $\{Ax:A\in \G\}$.

\item[$\bullet$]
In the setting of a triple $(\V,\Rn,\,\lambda)$ induced by a complete hyperbolic polynomial $p$ on a finite dimensional real vector space $\V$, Bauschke et al., \cite{bauschke et al} introduce the concept of `isometric hyperbolic polynomial' and state a result (\cite{bauschke et al}, Proposition 5.3) describing the maximum of a linear function over a $\lambda$-orbit.
In this result, the optimality condition is given in the form $\lambda(x+y)=\lambda(x)+\lambda(y)$.

\item [$\bullet$] In the setting of a simple Euclidean Jordan algebra $\V$ of rank $n$ carrying the
trace inner product with $\lambda:\V\rightarrow \Rn$ denoting the eigenvalue map, it is known that for any $c\in \V$, 
\begin{equation}\label{moldovan result}
\max\Big \{\langle c,x\rangle: x\in {\mathcal{J}}^k(\V)\Big \}=\lambda_1(c)+\lambda_2(c)+\cdots+\lambda_k(c),
\end{equation}
where ${\mathcal{J}}^k(\V)$ is the set of all idempotents of rank $k$ in $\V$, $1\leq k\leq n$, see \cite{moldovan}, Theorem 17. This result can be viewed as a statement on maximizing a linear function
over the $\lambda$-orbit of (any) one idempotent of rank $k$.

\item [$\bullet$]
In \cite{ramirez et al},  Ram\'{i}rez, Seeger, and Sossa formulate a commutation principle in the setting of Euclidean Jordan algebras:
If $a$ is a local optimizer of the problem 
$$\mbox{min/max}\,\{h(x)+\Phi(x):x\in E\},$$
 where $E$ is a spectral 
set, $\Phi$ is a spectral function, and $h$ is Fr\'{e}chet differentiable, then $a$ and $h^\prime(a)$ operator commute. 
Based on this, they present a commutation principle for a variational inequality problem and consider the problem of describing the distance to a spectral set. See \cite{gowda-jeong} for a slight weakening of the conditions and a similar result proved in the setting of normal decomposition systems.
See also \cite{niezgoda-commutation}  for certain elaborations and applications. 
\end{itemize}

Our contributions in this paper are as follows. Motivated by the above results, 
we formulate the definition of  
a Fan-Theobald-von Neumann system and study some of its basic properties. We show that Euclidean Jordan algebras, normal decomposition systems (in particular, Eaton triples), and structures induced by complete isometric hyperbolic polynomials are particular instances.
In this general framework, we describe results of the form (\ref{basic identities}) 
which extend/recover many of the above mentioned results.
We also introduce the concept of commutativity in a FTvN system that encompasses (or extends)  the concepts of simultaneous order eigenvalue/singular value decompositions and operator commutativity.  
Additionally, we present a commutation principle for a variational inequality problem in a FTvN system that 
even strengthens the commutation principle of Ram\'{i}rez, Seeger, and Sossa  (\cite{ramirez et al}, Proposition 8) stated in the setting of  
 Euclidean Jordan algebras. \\

The outline of the paper is as follows. In Section 2, we describe FTvN systems and present some basic properties.
Section 3 deals with  optimization problems coming from 
certain combinations of  spectral and  
linear/distance/convex functions, and a 
commutation principle for  variational inequality problems. 
Section 4 deals with Euclidean Jordan algebras and some specialized results.
Section 5 deals with an FTvN system induced by certain hyperbolic polynomials. 
In Section 6, we cover normal decomposition systems and Eaton triples.

%%%%%%%%%%%%%%%%%%%%%%%%%%%%%%%%%%%%%%%%%%%%%%%%%%%%%%%%%%%%%%%%%%%%%%%%%%%%%%%%%%%%%%%%%%%%%%%%%%%%%%%%
\section{Fan-Theobald-von Neumann system}

Motivated by the results of Fan, Theobald, and von Neumann mentioned in the Introduction, we now formulate the definition of 
a Fan-Theobald-von Neumann system.
Let $\V$ and $\W$ be two real inner product spaces where, for convenience, we use the same inner product (and norm) 
notation. Let $\lambda:\V\rightarrow \W$ be a map. We define the  {\it $\lambda$-orbit of an element} $u\in \V$ as the set 
$$[u]:=\{x\in \V: \lambda(x)=\lambda(u)\}.$$ 
We have   the following elementary result.

\begin{proposition} \label{equivalence of linear and distance functions}
{\it Suppose $||\lambda(x)||=||x||$ for all $x\in \V$. Then, for any $u, c\in \V$, the following are equivalent:
\begin{itemize}
\item [$(a)$] $\max\Big \{\langle c,x\rangle:\,x\in [u]\Big \}=\langle \lambda(c),\lambda(u)\rangle.$
\item [$(b)$] $\min\Big \{||c-x||:x\in [u]\Big \}=||\lambda(c)-\lambda(u)||.$
\end{itemize}
}
\end{proposition}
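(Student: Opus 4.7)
The plan is to reduce both statements to a common quadratic form by expanding the squared norms and exploiting the norm-preserving property of $\lambda$.

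First I would observe that on the orbit $[u]$, every element has the same norm as $u$: since $\lambda(x) = \lambda(u)$ for $x \in [u]$ and $\lambda$ is norm preserving, $\|x\| = \|\lambda(x)\| = \|\lambda(u)\| = \|u\|$. This is the key rigidity that makes the two formulations interchangeable.

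Next, for any $x \in [u]$ I would expand
\begin{equation*}
\|c-x\|^2 = \|c\|^2 - 2\langle c, x\rangle + \|x\|^2 = \|c\|^2 - 2\langle c, x\rangle + \|u\|^2,
\end{equation*}
and in parallel, using $\|\lambda(c)\| = \|c\|$ and $\|\lambda(u)\| = \|u\|$,
\begin{equation*}
\|\lambda(c) - \lambda(u)\|^2 = \|c\|^2 - 2\langle \lambda(c), \lambda(u)\rangle + \|u\|^2.
\end{equation*}
Thus minimizing $\|c-x\|$ over $[u]$ is the same as maximizing $\langle c, x\rangle$ over $[u]$, via the identity $\min_{x\in[u]} \|c-x\|^2 = \|c\|^2 + \|u\|^2 - 2\max_{x\in[u]} \langle c, x\rangle$, and the target value $\|\lambda(c)-\lambda(u)\|^2$ corresponds in exactly the same way to the target value $\langle \lambda(c), \lambda(u)\rangle$.

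From here, the equivalence of $(a)$ and $(b)$ is immediate: the maximum in $(a)$ attains the value $\langle \lambda(c), \lambda(u)\rangle$ if and only if, after the algebraic substitution above, the minimum in $(b)$ attains the value $\|\lambda(c) - \lambda(u)\|$. There is no real obstacle here; the only subtle point is the preliminary observation that the orbit has constant norm, which relies squarely on the hypothesis $\|\lambda(x)\| = \|x\|$. Without norm preservation, the two problems would no longer differ by an additive constant and the equivalence would fail.
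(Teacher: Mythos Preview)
Your proof is correct and follows essentially the same approach as the paper: expand $\|c-x\|^2$ using the constant-norm property of the orbit (coming from $\|\lambda(\cdot)\|=\|\cdot\|$), expand $\|\lambda(c)-\lambda(u)\|^2$ the same way, and observe that the two expressions differ only in the cross term, so the max in $(a)$ matches its target value exactly when the min in $(b)$ does. The paper writes the constants as $\|\lambda(c)\|^2+\|\lambda(u)\|^2$ rather than $\|c\|^2+\|u\|^2$, but by norm preservation these are the same.
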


\begin{proof}
 Fix $u,c\in \V$ and let $x\in [u]$. Then, $||x||=||\lambda(x)||=||\lambda(u)||$. Hence,
$$||c-x||^2=||c||^2+||x||^2-2\langle c,x\rangle=||\lambda(c)||^2+||\lambda(u)||^2-2\langle c,x\rangle.$$
As $x$ varies over $[u]$, we have
$$\min\, ||c-x||^2=||\lambda(c)||^2+||\lambda(u)||^2-2\max\,\langle c,x\rangle.$$ Comparing this to
$$||\lambda(c)-\lambda(u)||^2=||\lambda(c)||^2+||\lambda(u)||^2-2\langle \lambda(c),\lambda(u)\rangle$$
we see that $(a)$ holds if and only if $(b)$ holds.
\end{proof}

Note that condition $(a)$ deals with  the inner product induced linear function $x\rightarrow \langle c,x\rangle$ and 
attainment of its maximum over the $\lambda$-orbit $[u]$.
We now define a  Fan-Theobald-von Neumann system  as a triple $(\V,\W,\lambda)$ where $||\lambda(x)||=||x||$ for all $x\in \V$ and condition $(a)$ in the above proposition holds for all $c,u\in \V$. 
An equivalent definition is given in the following expanded form.

\begin{definition} \label{ftvn}
{\it A Fan-Theobald-von Neumann system (FTvN system, for short) is a triple $(\V,\W,\lambda)$, where 
$\V$ and $\W$ are  real inner product spaces and $\lambda:\V\rightarrow \W$ is a map satisfying the following conditions:
\begin{itemize}
\item [$(A1)$] $||\lambda(x)||=||x||$ for all $x\in \V$.
\item [$(A2)$] $\langle x,y\rangle \leq \langle \lambda(x),\lambda(y)\rangle$ for all $x,y\in \V$.
\item [$(A3)$] For any $c\in \V$ and $q\in \lambda(\V)$, there exists $x\in \V$ such that
\begin{equation}\label{A3}
\lambda(x)=q\quad\mbox{and}\quad \langle c,x\rangle=\langle \lambda(c),\lambda(x)\rangle.
\end{equation}
\end{itemize}
}
\end{definition}

\gap

A simple  example of a  FTvN system is $(\V,\R,\lambda)$, where 
$\V$ is a real inner product space and $\lambda(x):=||x||$ for all $x\in \V$. Here, $(A2)$ is just the Cauchy-Schwarz inequality and  condition $(A3)$,
 for $c\neq 0$, is seen by taking $x=(\frac{q}{||c||})c.$ 
Another  simple example is $(\V,\V,A)$, where $\V$ is a real inner product space and $A$ is a linear isometry on $\V$ (which is not necessarily invertible).
Also, if $(\V,\W,\lambda)$ is  a FTvN system, then so is 
$(\V,\W,A\circ\lambda)$, where $A:\W\rightarrow \W$ is a linear isometry.
Cartesian product of a finite number of  FTvN systems can be made into an FTvN system in an obvious way (by considering the sum of inner products and creating a $\lambda$ in a componentwise manner). 
We remark that conditions $(A1)$ and $(A2)$ need not imply $(A3)$; see Section 4 for an example.

\gap

In the next several results, we state some basic properties that hold in a FTvN  system. Some of these are elementary, 
and some proofs are modeled after similar ones existing in the literature \cite{lewis,bauschke et al}.

\gap

In a FTvN system $(\V,\W,\lambda)$, for any $c\in \V$, we define
$$\wl(c):=-\lambda(-c).$$

\begin{proposition}\label{fts-basic properties}
{\it In a FTvN system $(\V,\W,\lambda)$, the following statements hold for all $x,y,c,u\in \V$: 
\begin{itemize}
\item [$(a)$] $\lambda(\alpha\,x)=\alpha\,\lambda(x)$ for all $\alpha\geq 0$.
\item [$(b)$] $\langle\, \wl(c),\lambda(x)\rangle\leq \langle c,x\rangle \leq \langle \lambda(c),\lambda(x)\rangle$.
\item [$(c)$] $||\lambda(c)-\lambda(x)||\leq ||c-x||\leq ||\,\wl(c)-\lambda(x)||$.
\item [$(d)$] $\min\Big \{\langle c,x\rangle:\,x\in [u]\Big \}=\langle \,\wl(c),\lambda(u)\rangle.$
\item [$(e)$] $\max\Big \{||c-x||:x\in [u]\Big \}=||\wl(c)-\lambda(u)||.$
\end{itemize}
}
\end{proposition}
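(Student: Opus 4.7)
The plan is to observe that, under axiom (A1), every squared-norm quantity in the statement reduces to a statement about inner products, so parts (c), (d), (e) will follow mechanically from parts (a), (b) together with the expansion $\|c-x\|^2 = \|c\|^2 + \|x\|^2 - 2\langle c,x\rangle$. Thus the real work lives in (a) and (b); the rest is organized bookkeeping modeled on the proof of Proposition~\ref{equivalence of linear and distance functions}.

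For part (a), I would first dispose of $\alpha=0$ using (A1) directly: $\|\lambda(0)\| = \|0\| = 0$, hence $\lambda(0) = 0$. For $\alpha > 0$, my plan is to chain (A2), Cauchy--Schwarz, and (A1) into
\[
\alpha\|x\|^{2} \;=\; \langle \alpha x,\,x\rangle \;\leq\; \langle \lambda(\alpha x),\,\lambda(x)\rangle \;\leq\; \|\lambda(\alpha x)\|\,\|\lambda(x)\| \;=\; \|\alpha x\|\,\|x\| \;=\; \alpha\|x\|^{2},
\]
forcing equality throughout. The Cauchy--Schwarz equality case then gives $\lambda(\alpha x) = \beta\,\lambda(x)$ for some $\beta\geq 0$, and matching norms via (A1) pins down $\beta = \alpha$ (handling the degenerate case $\lambda(x)=0$, which by (A1) forces $x=0$, separately). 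This is the one step that actually uses structural information beyond a one-line manipulation, and I expect it to be the main (though minor) obstacle.

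Part (b) I would obtain by unfolding $\wl(c) = -\lambda(-c)$: the right-hand inequality is (A2) verbatim, while the left-hand inequality comes from applying (A2) to the pair $(-c, x)$ and negating. Once (b) is in hand, part (c) is immediate from expanding the three squared norms using (A1) (note $\|\wl(c)\| = \|\lambda(-c)\| = \|{-c}\| = \|c\|$): each of the two inequalities in (c) is exactly twice one of the inequalities in (b).

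For (d) and (e), my plan is to reduce both to the defining property $(A3)$-plus-$(A2)$. For (d), rewrite $\min\{\langle c,x\rangle : x\in[u]\} = -\max\{\langle -c,x\rangle : x\in[u]\}$; the inner maximum is bounded above by $\langle \lambda(-c),\lambda(u)\rangle$ by (A2) and is attained by (A3) (applied with $-c$ in place of $c$ and $q = \lambda(u)$), yielding exactly $\langle \wl(c),\lambda(u)\rangle$ after negation. Finally (e) follows by the same identity used in Proposition~\ref{equivalence of linear and distance functions}: for $x\in[u]$ one has $\|x\| = \|\lambda(u)\| = \|u\|$ by (A1), so maximizing $\|c-x\|$ over $[u]$ is equivalent to minimizing $\langle c,x\rangle$ over $[u]$, which by (d) equals $\langle \wl(c),\lambda(u)\rangle$; reassembling the squared norm gives $\|\wl(c) - \lambda(u)\|$.
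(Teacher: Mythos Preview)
Your proposal is correct and, for parts (b)--(e), essentially identical to the paper's argument. The only noteworthy difference is in part (a): whereas you force equality in a Cauchy--Schwarz chain and then invoke the equality case (with a side case for $x=0$), the paper dispatches (a) in a single line by expanding
\[
\|\lambda(\alpha x)-\alpha\lambda(x)\|^2=\|\lambda(\alpha x)\|^2+\alpha^2\|\lambda(x)\|^2-2\alpha\langle\lambda(\alpha x),\lambda(x)\rangle\leq \|\alpha x\|^2+\alpha^2\|x\|^2-2\alpha\langle \alpha x,x\rangle=0,
\]
using (A1) on the norm terms and (A2) on the cross term. This avoids the case split and the appeal to the Cauchy--Schwarz equality clause, so the step you flagged as ``the main (though minor) obstacle'' is in fact no obstacle at all.
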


\begin{proof} We will use conditions $(A1)-(A3)$ in Definition \ref{ftvn}. \\
$(a)$  Let $\alpha\geq 0$. Using $(A1)$ and $(A2)$, we have  
$$||\lambda (\alpha\,x)-\alpha\lambda(x)||^2=||\lambda (\alpha\,x)||^2+\alpha^2||\lambda(x)||^2-2\alpha\langle \lambda(\alpha x),\lambda(x)\rangle \leq ||\alpha\,x||^2+\alpha^2||x||^2-2\alpha\langle \alpha x,x\rangle=0,$$
leading to the given statement.\\ 
\noindent $(b)$ The first inequality is obtained from $(A2)$ by putting $y=-c$. The second  inequality is just $(A2)$ with $y=c$. \\
$(c)$ Since 
$||\lambda(c)-\lambda(x)||^2-||c-x||^2=
2[\langle c,x\rangle-\langle \lambda(c),\lambda(x)\rangle]$ and $||\,\wl(c)-\lambda(x)||^2-||c-x||^2=
2[\langle c, x\rangle-\langle\, \wl(c),\lambda(x)\rangle]$, the inequalities in $(c)$ follow from Item $(b)$. 
\\
$(d)$ 
This is seen by  replacing  $c$ by $-c$ in Item $(a)$ of Proposition \ref{equivalence of linear and distance functions}, which holds because it is equivalent to $(A2)$ and $(A3)$.\\
$(e)$ Let $x\in [u]$. We have, as in the proof of Proposition \ref{equivalence of linear and distance functions},
$$||c-x||^2=||c||^2+||x||^2-2\langle c,x\rangle=||\,\wl(c)||^2+||\lambda(u)||^2-2\langle c,x\rangle.$$
Then, as $x$ varies over $[u]$, we have
$$\max\, ||c-x||^2=||\,\wl(c)||^2+||\lambda(u)||^2-2\min\,\langle c,x\rangle$$ which, by Item $(d)$, equals 
$||\,\wl(c)||^2+||\lambda(u)||^2-2\langle \,\wl(c),\lambda(u)\rangle=||\,\wl(c)-\lambda(u)||^2.$
\end{proof}

\gap

Note: All linear functions from $\V$ to $R$ considered in this paper are of the form $x\mapsto \langle c,x\rangle$ for some $c\in \V$. (These are continuous linear functionals on $\V$ and when $\V$ is a Hilbert space, every continuous linear functional arises this way.) As the map $\lambda$ is Lipschitz continuous (see Item $(c)$ in the above proposition) and norm preserving, every $\lambda$-orbit is closed in $\V$ and
lies on a sphere with origin as the center; it is compact
when $\V$ is finite dimensional.
{\it Throughout this paper, depending on the context, we use the same notation to denote an optimization problem as well as
 its optimal value.} For example, $\sup_{E}\,f$ denotes the problem of finding/describing the supremum of $f$ over the set $E$ as well as the supremum value.

\gap

In the setting of a FTvN system, the  following statements are simple consequences of convexity/concavity of a linear function and convexity of a distance function; they are based on  Propositions \ref{equivalence of linear and distance functions} and
 \ref{fts-basic properties}. Here, for a set $S$, we let $\conv(S)$ denote the convex hull of $S$.
\begin{itemize}
\item [$\bullet$]
$\max\Big \{\langle c,x\rangle: x\in \conv([u])\Big \}=\max\Big \{\langle c,x\rangle: \in [u]\Big\}=\langle \lambda(c),\lambda(u)\rangle$ and
\item [$\bullet$]
$\min\Big \{\langle c,x\rangle: x\in \conv([u])\Big \}=\min\Big \{\langle c,x\rangle: \in [u]\Big\}=\langle \,\wl(c),\lambda(u)\rangle.$
\item [$\bullet$]
$\max\Big \{||c-x||: x\in \conv([u])\Big \}=\max\Big \{||c-x||: x\in [u]\Big\}=||\,\wl(c)-\lambda(u)||.$
\end{itemize}

\gap

\begin{theorem} (Sublinearity theorem) \label{sublinearity theorem}
{\it Let $(\V,\W,\lambda)$ be a FTvN system. Then, for any $w\in \lambda(\V)$, the function $x\mapsto \langle w,\lambda(x)\rangle$ is sublinear, that is, for all $c,x,y\in \V$, we have:
\begin{equation}\label{sublinear} 
\langle \lambda(c),\lambda(x+y)\rangle \leq \langle \lambda(c),\lambda(x)\rangle+\langle \lambda(c),\lambda(y)\rangle.
\end{equation}
Consequently, $||\lambda(x+y)||\leq ||\lambda(x)+\lambda(y)||$.
}
\end{theorem}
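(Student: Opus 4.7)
The plan is to use axiom $(A3)$ to replace $\lambda(c)$ inside the inner product $\langle \lambda(c),\lambda(x+y)\rangle$ by a concrete vector $c'\in\V$ lying in the $\lambda$-orbit of $c$ and achieving the equality case with the sum $x+y$; once we have such a $c'$, the linearity of the inner product separates the sum, and $(A2)$ handles each piece.

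More precisely, since $w=\lambda(c)\in\lambda(\V)$, I apply $(A3)$ with the roles swapped: take the element $x+y\in\V$ in place of the ``$c$'' variable and $q:=\lambda(c)\in\lambda(\V)$ in place of ``$q$''. This produces a $c'\in\V$ with
$$\lambda(c')=\lambda(c)\quad\text{and}\quad \langle x+y,\,c'\rangle=\langle \lambda(x+y),\,\lambda(c')\rangle=\langle \lambda(c),\,\lambda(x+y)\rangle.$$
Then I expand the left-hand side and apply $(A2)$ to each term:
$$\langle \lambda(c),\lambda(x+y)\rangle=\langle c',x\rangle+\langle c',y\rangle\leq \langle \lambda(c'),\lambda(x)\rangle+\langle \lambda(c'),\lambda(y)\rangle=\langle \lambda(c),\lambda(x)\rangle+\langle \lambda(c),\lambda(y)\rangle,$$
which is (\ref{sublinear}).

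For the consequence, I put $c:=x+y$ into (\ref{sublinear}) and use $(A1)$ on the left, giving
$$\|x+y\|^2=\|\lambda(x+y)\|^2=\langle \lambda(x+y),\lambda(x+y)\rangle\leq \langle \lambda(x+y),\,\lambda(x)+\lambda(y)\rangle.$$
Applying the Cauchy--Schwarz inequality to the right-hand side and cancelling one factor of $\|\lambda(x+y)\|$ (trivial when it vanishes) yields $\|\lambda(x+y)\|\leq \|\lambda(x)+\lambda(y)\|$.

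The only substantive step is the correct ``dual'' invocation of $(A3)$; once one sees that $\lambda(c)$ in $\langle \lambda(c),\lambda(x+y)\rangle$ should be realized as $\lambda(c')$ for a $c'\in\V$ paired optimally with $x+y$ (rather than trying to realize $\lambda(x+y)$), the rest is automatic from bilinearity and $(A2)$. I do not anticipate any obstacle beyond identifying this swap.
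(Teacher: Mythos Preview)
Your proof is correct and follows essentially the same approach as the paper: both arguments pick an element $c'$ (the paper calls it $z$) in the $\lambda$-orbit $[c]$ achieving $\langle c',x+y\rangle=\langle\lambda(c),\lambda(x+y)\rangle$, split by bilinearity, and apply $(A2)$ termwise. The paper phrases this via the max formulation $\max_{z\in[c]}\langle z,x+y\rangle=\langle\lambda(c),\lambda(x+y)\rangle$ rather than invoking $(A3)$ directly, and the consequence is handled identically via $c=x+y$ and Cauchy--Schwarz.
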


\begin{proof} 
Fix $c,x,y\in \V$. For any $z\in [c]$, we have $\lambda(z)=\lambda(c)$ and 
$$\langle z,x+y\rangle =\langle z,x\rangle+\langle z,y\rangle\leq \langle \lambda(z),\lambda(x)\rangle+\langle\lambda(z),\lambda(y)\rangle=\langle \lambda(c),\lambda(x)\rangle+\langle \lambda(c),\lambda(y)\rangle.$$
Taking the maximum over $z$ and noting  
$\max\Big \{\langle z,x+y\rangle:\,z\in [c]\Big \}=\langle \lambda(c),\lambda(x+y)\rangle$
we get (\ref{sublinear}).
\\Now, letting $c=x+y$ in (\ref{sublinear}), we have 
\begin{equation}\label{CS}
||\lambda(x+y)||^2=\langle \lambda(x+y),\lambda(x+y)\rangle \leq \langle \lambda(x+y),\lambda(x)+\lambda(y)\rangle\leq 
||\lambda(x+y)||\,||\lambda(x)+\lambda(y)||,
\end{equation}
leading to  $||\lambda(x+y)||\leq ||\lambda(x)+\lambda(y)||$.
\end{proof}

\gap

The concept of commutativity, defined below, is central to the study of FTvN systems. 

\begin{definition}\label{commute}
{\it In a FTvN system $(\V,\W,\lambda)$ we say that 
elements $x,y\in \V$ commute if 
$$\langle x,y\rangle = \langle \lambda(x),\lambda(y)\rangle.$$
}
\end{definition}

As we shall see later, this concept, specialized to Euclidean Jordan algebras, is related to (in fact, stronger than) operator commutativity, which, in the settings of $\Sn$ and $\Hn$ reduces to the commutativity of two matrices.
In the presence of $(A2)$, we  can now interpret condition $(A3)$ in Definition \ref{ftvn}:  Every element $c$ in $\V$ commutes with some element in any given $\lambda$-orbit. Alternatively, defining 
$$C(x):=\{y\in \V:\, y \,\,\mbox{commutes with}\,\,x\},$$
$(A3)$ says that $C(c)\cap [u]\neq \emptyset$ for all $c,u\in \V$. {
\it We also note that in the optimization problem
$\max\Big \{\langle c,x\rangle:\,x\in [u]\Big \}$ whose optimal value is $\langle \lambda(c),\lambda(u)\rangle$,
the objective function attains its maximum at an $x^*$ if and only if $x^*$ commutes with $c$.}
Because of this, the concept of commutativity can be viewed as (part of) an  optimality condition. This may explain why
commutativity comes up in various optimization and variational inequality settings.

\gap
The following result describes commutativity in alternate ways.

\begin{proposition}\label{commutativity proposition}
{\it In a FTvN system $(\V,\W,\lambda)$, the following are equivalent:
\begin{itemize}
\item [$(a)$] $x$ and $y$ commute, that is, $\langle x,y\rangle=\langle \lambda(x),\lambda(y)\rangle.$
\item [$(b)$] $||\lambda(x)-\lambda(y)||=||x-y||.$
\item [$(c)$] $||\lambda(x+y)||= ||\lambda(x)+\lambda(y)||$.
\item [$(d)$] $\lambda(x+y)=\lambda(x)+\lambda(y).$
\end{itemize}
}
\end{proposition}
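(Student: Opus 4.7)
The plan is to prove the chain (a)$\Leftrightarrow$(b), (a)$\Leftrightarrow$(c), and (c)$\Leftrightarrow$(d), with the nontrivial content living in (c)$\Rightarrow$(d). Throughout, the norm-preservation axiom $(A1)$ is the workhorse: it lets us freely swap $\|x\|$ with $\|\lambda(x)\|$, $\|y\|$ with $\|\lambda(y)\|$, and (most importantly) $\|x+y\|$ with $\|\lambda(x+y)\|$.

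First I would dispatch (a)$\Leftrightarrow$(b) by the polarization identity already used in Proposition~\ref{equivalence of linear and distance functions}: writing $\|x-y\|^2 = \|x\|^2+\|y\|^2-2\langle x,y\rangle$ and $\|\lambda(x)-\lambda(y)\|^2 = \|\lambda(x)\|^2+\|\lambda(y)\|^2-2\langle \lambda(x),\lambda(y)\rangle$, and invoking $(A1)$ to identify the first two terms on each side, the two conditions become equivalent. The same expansion, applied to $\|x+y\|^2$ and $\|\lambda(x)+\lambda(y)\|^2$ and combined with $\|\lambda(x+y)\|=\|x+y\|$ from $(A1)$, yields (a)$\Leftrightarrow$(c).

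The implication (d)$\Rightarrow$(c) is immediate (take norms). For the converse (c)$\Rightarrow$(d), which is the only step requiring more than bookkeeping, I would revisit the chain (\ref{CS}) from the proof of the sublinearity theorem:
\begin{equation*}
\|\lambda(x+y)\|^2 \leq \langle \lambda(x+y),\lambda(x)+\lambda(y)\rangle \leq \|\lambda(x+y)\|\,\|\lambda(x)+\lambda(y)\|.
\end{equation*}
Assuming (c), the outer terms coincide, so equality holds throughout. Equality in the Cauchy--Schwarz step forces $\lambda(x+y) = t\bigl(\lambda(x)+\lambda(y)\bigr)$ for some $t\geq 0$ (or $\lambda(x)+\lambda(y)=0$, in which case $\|\lambda(x+y)\|=0$ by (c) and the conclusion is trivial); equating norms via (c) then pins down $t=1$, giving $\lambda(x+y)=\lambda(x)+\lambda(y)$.

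I do not expect a real obstacle here: every step is a direct application of either $(A1)$ or the sublinearity computation. The only subtlety is the degenerate Cauchy--Schwarz case $\lambda(x)+\lambda(y)=0$ in (c)$\Rightarrow$(d), which needs to be split off but is handled in one line.
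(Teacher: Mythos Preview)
Your proposal is correct and follows essentially the same approach as the paper: the paper also reduces (a)$\Leftrightarrow$(b)$\Leftrightarrow$(c) to the single identity $\|\lambda(x)-\lambda(y)\|^2-\|x-y\|^2 = 2[\langle x,y\rangle-\langle \lambda(x),\lambda(y)\rangle] = \|\lambda(x+y)\|^2-\|\lambda(x)+\lambda(y)\|^2$ via $(A1)$, and then derives (c)$\Rightarrow$(d) from the equality case of the Cauchy--Schwarz step in (\ref{CS}). Your treatment of the degenerate case $\lambda(x)+\lambda(y)=0$ is slightly more explicit than the paper's, but otherwise the arguments coincide.
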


\begin{proof}
Using $(A1)$, we get the equalities  
$$||\lambda(x)-\lambda(y)||^2-||x-y||^2=
2[\langle x,y\rangle-\langle \lambda(x),\lambda(y)\rangle]=||\lambda(x+y)||^2- ||\lambda(x)+\lambda(y)||^2.$$
The equivalence $(a)\Leftrightarrow (b)\Leftrightarrow (c)$ follows.\\
$(c)\Rightarrow (d)$: When $(c)$ holds, we have the equality in the Cauchy-Schwarz inequality (\ref{CS}). Hence, 
one of the vectors in the (in)equality is  a nonnegative multiple of the other vector. Since $||x+y||=||\lambda(x+y)||= ||\lambda(x)+\lambda(y)||$, $(d)$ follows. \\
Finally, $(d)\Rightarrow (c)\Rightarrow (a)$.
\end{proof}

\gap

Arguments similar to the above will show that 
\begin{equation}\label{x and -c commute}
x\, \,\mbox{{\it commutes with}}\,\,-c \Longleftrightarrow \langle c,x\rangle=\langle \,\wl(c),\lambda(x)\rangle\Longleftrightarrow ||c-x||=||\,\wl(c)-\lambda(x)||.
\end{equation}

For ease of reference, we collect various statements equivalent to $(A3)$.

\begin{proposition}\label{items equivalent to A3}
{\it In a  FTvN system, the following hold:  
\begin{itemize}
\item [$(a)$] For any $c\in \V$ and $q\in \lambda(\V)$, there exists $x\in \V$ such that
$\lambda(x)=q$ and satisfying  one/all of the following conditions:
$$\langle c,x\rangle=\langle \,\lambda(c),\lambda(x)\rangle,\,\,\, 
||c-x||=||\lambda(c)-\lambda(x)||,\,\,\,\lambda(c+x)=\lambda(c)+\lambda(x).$$ 
\item [$(b)$]  
For any $c\in \V$ and $q\in \lambda(\V)$, there exists $x\in \V$ such that
$\lambda(x)=q$ and satisfying  one/both of the following conditions: 
$$\langle c,x\rangle=\langle \,\wl(c),\lambda(x)\rangle,\,\,\,||c-x||=||\,\wl(c)-\lambda(x)||.$$
\end{itemize}
}
\end{proposition}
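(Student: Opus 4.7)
The plan is to show that both items reduce to straightforward bookkeeping applications of the equivalences already established in Proposition \ref{commutativity proposition} and the line (\ref{x and -c commute}). For part $(a)$, I would invoke Proposition \ref{commutativity proposition} with $y=c$: for any $x\in\V$, the three statements $\langle c,x\rangle=\langle \lambda(c),\lambda(x)\rangle$, $||c-x||=||\lambda(c)-\lambda(x)||$, and $\lambda(c+x)=\lambda(c)+\lambda(x)$ are mutually equivalent (each being equivalent to $c$ and $x$ commuting). In particular, the subset of $x$ with $\lambda(x)=q$ satisfying any one of the three conditions coincides with the subset satisfying all three. Since $(A3)$ in Definition \ref{ftvn} produces an $x$ meeting the first condition, the same $x$ automatically meets the other two, so the ``one/all'' formulation in $(a)$ is immediate.

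For part $(b)$, the idea is to apply $(A3)$ with $-c$ in place of $c$; this is legitimate since $(-c,q)$ is still a pair of the required form. This yields an $x\in\V$ with $\lambda(x)=q$ satisfying $\langle -c,x\rangle=\langle \lambda(-c),\lambda(x)\rangle$. Using the definition $\wl(c):=-\lambda(-c)$ and multiplying through by $-1$, this identity becomes $\langle c,x\rangle=\langle \wl(c),\lambda(x)\rangle$, which is the first condition in $(b)$. The equivalence of that identity with $||c-x||=||\wl(c)-\lambda(x)||$ is exactly the content of the displayed chain (\ref{x and -c commute}), itself obtained by arguments parallel to Proposition \ref{commutativity proposition}.

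The argument presents no serious obstacle; the content is entirely combinatorial. The only point requiring care is the sign tracking when passing from $c$ to $-c$ in part $(b)$, and being sure to invoke the ``$-c$ version'' of commutativity recorded in (\ref{x and -c commute}) rather than Proposition \ref{commutativity proposition} itself.
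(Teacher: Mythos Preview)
Your proposal is correct and follows essentially the same approach as the paper: for $(a)$ you combine $(A3)$ with the equivalences in Proposition~\ref{commutativity proposition}, and for $(b)$ you replace $c$ by $-c$ in $(A3)$ and invoke (\ref{x and -c commute}). The paper's proof is a terser statement of exactly this argument.
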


\begin{proof}
$(a)$ This follows from $(A3)$ and Proposition \ref{commutativity proposition}.\\
$(b)$ We replace $c$ in $(A3)$ by $-c$ and use (\ref{x and -c commute}).
\end{proof}

A simple consequence of the above result is the following.

\begin{corollary}
{\it If $(\V,\W,\lambda)$ is a FTvN system, then $\lambda(\V)$ is a convex cone in $\W$; It is closed when $\V$ is finite dimensional.}
\end{corollary}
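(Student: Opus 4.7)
The plan is to verify three things in turn: (i) $\lambda(\V)$ is closed under nonnegative scalar multiplication, (ii) $\lambda(\V)$ is closed under addition, and (iii) (when $\V$ is finite dimensional) $\lambda(\V)$ is closed in $\W$. Once (i) and (ii) are in hand, convexity of the cone is immediate: for $q_1, q_2 \in \lambda(\V)$ and $t \in [0,1]$, the scaled vectors $t q_1$ and $(1-t) q_2$ both lie in $\lambda(\V)$, and their sum also lies in $\lambda(\V)$, giving $tq_1 + (1-t)q_2 \in \lambda(\V)$.

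For (i), I would simply invoke Proposition~\ref{fts-basic properties}(a): if $q = \lambda(x)$ and $\alpha \geq 0$, then $\alpha q = \alpha \lambda(x) = \lambda(\alpha x) \in \lambda(\V)$.

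For (ii), which is the heart of the argument, the idea is to combine $(A3)$ with the characterization of commutativity from Proposition~\ref{commutativity proposition}. Let $q_1, q_2 \in \lambda(\V)$ and pick any $c \in \V$ with $\lambda(c) = q_1$. By $(A3)$ applied to $c$ and $q_2 \in \lambda(\V)$, there exists $x \in \V$ with $\lambda(x) = q_2$ and $\langle c, x\rangle = \langle \lambda(c), \lambda(x)\rangle$, i.e., $c$ and $x$ commute. By the equivalence $(a)\Leftrightarrow (d)$ in Proposition~\ref{commutativity proposition}, this gives $\lambda(c + x) = \lambda(c) + \lambda(x) = q_1 + q_2$, so $q_1 + q_2 \in \lambda(\V)$. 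I expect this to be the main (conceptual) step, since it is exactly the place where $(A3)$ is used in an essential way; the rest of the statement could be proved with only $(A1)$ and $(A2)$ on the margins.

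For (iii), assume $\V$ is finite dimensional and let $(x_n) \subset \V$ be a sequence with $\lambda(x_n) \to q$ in $\W$. By $(A1)$, $\|x_n\| = \|\lambda(x_n)\|$, so $(x_n)$ is bounded in the finite-dimensional space $\V$ and admits a subsequence $x_{n_k} \to x \in \V$. Proposition~\ref{fts-basic properties}(c) shows that $\lambda$ is $1$-Lipschitz, hence continuous, so $\lambda(x_{n_k}) \to \lambda(x)$ and therefore $q = \lambda(x) \in \lambda(\V)$. This shows that $\lambda(\V)$ is closed in $\W$, completing the proof.
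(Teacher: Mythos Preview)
Your proof is correct and follows essentially the same route as the paper's: positive homogeneity from Proposition~\ref{fts-basic properties}(a), closure under addition via $(A3)$ combined with the equivalence of commutativity and $\lambda(c+x)=\lambda(c)+\lambda(x)$, and closedness in the finite-dimensional case from $(A1)$ plus the Lipschitz continuity of $\lambda$. The only difference is that the paper cites Proposition~\ref{items equivalent to A3}(a) (which packages $(A3)$ and Proposition~\ref{commutativity proposition} together) and leaves the closedness argument implicit, whereas you invoke $(A3)$ and Proposition~\ref{commutativity proposition} separately and spell out the sequential-compactness argument; the substance is identical.
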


\begin{proof}
In view of Item $(a)$ in Proposition \ref{fts-basic properties}, $\lambda(\V)$ is a cone. If $\lambda(u)$ and $\lambda(v)$ are two elements in $\lambda(\V)$, then, applying Item $(a)$ in the above proposition with $q=\lambda(u)$ and $c=v$, we get an $x\in \V$ such that $\lambda(x)=q=\lambda(u)$ and $\lambda(x+c)=\lambda(x)+\lambda(c)=\lambda(u)+\lambda(v)$. Hence, $\lambda(u)+\lambda(v)\in \lambda(\V)$. 
Thus, $\lambda(\V)$ is a convex cone. Finally, if $\V$ is finite dimensional, we can use $(A1)$ and the continuity of $\lambda$ to show that $\lambda(\V)$ is closed.
\end{proof}

\gap
Motivated by certain concepts in Euclidean Jordan algebras, we now introduce the following.

\begin{definition}\label{spectral set}
{\it Let $(\V,\W,\lambda)$ be a FTvN system.
\begin{itemize}
\item [$\bullet$] A set $E$ in $\V$ is called a spectral set if it is of the form $E=\lambda^{-1}(Q)$ for some
$Q\subseteq \W$.
\item [$\bullet$] A function $\Phi:\V\rightarrow \R$ is said to be a spectral function if it is of the form
$\Phi=\phi\circ \lambda$ for some $\phi:\W\rightarrow \R$.
\end{itemize}
}
\end{definition}

\gap

It is clear that a spectral set is a union of  $\lambda$-orbits. The following implication is an intrinsic test for spectrality:   
$$\left [x\in E,\,\lambda(x)=\lambda(y)\right ]\Rightarrow y\in E.$$
(Then, we can let $Q:=\lambda(E)$ so that $E=\lambda^{-1}(Q)$.)
Also, a (real valued) function on $\V$ is a spectral function if and only if it is a constant on any $\lambda$-orbit.

\gap

Previously, we listed some elementary examples.
In the FTvN system $(\V,\W,\lambda)$, where $\W=\R$ and $\lambda(x)=||x||$, two elements  commute if and only if  one of them is a nonnegative  (scalar) 
multiple of the other. Also, $\lambda$-orbits are spheres centered at the origin and spectral functions are radial. In the FTvN system $(\V,\V,S)$, where $S$ is a linear isometry, any two elements commute. In fact, due to Item $(a)$ in Proposition \ref{fts-basic properties} and Item $(d)$ in Proposition \ref{commutativity proposition}, every FTvN
system where any two elements commute arises this way. 
\\

In the subsequent sections, we will provide nontrivial examples of FTvN systems. In particular, we will show/see
 the following: 
\begin{itemize}
\item [$\bullet$] If $\V$ is a Euclidean Jordan algebra of rank $n$ carrying the trace inner product 
and $\lambda:\V\rightarrow \Rn$ denotes the eigenvalue map, then  the triple $(\V,\Rn,\lambda)$ becomes a 
FTvN system. In this setting, a set in $\V$ is a spectral set if it is of the form $\lambda^{-1}(Q)$ for some (permutation invariant) set $Q$ in $\Rn$; a function $\Phi:\V\rightarrow \R$ is a spectral function if it is of the form 
$\phi\circ \lambda$ for some (permutation invariant) function $\phi:\Rn\rightarrow \R$. When $\V$ is simple, these are precisely sets and functions that are invariant under automorphisms of $\V$. Commutativity of elements $x$ and $y$ in the FTvN system $(\V,\W,\lambda)$ means that there is a 
Jordan frame ${\cal E}=\{e_1,e_2,\ldots, e_n\}$ in $\V$ such that  $x$ and $y$ have simultaneous order diagonal 
decomposition with respect to ${\cal E}$, that is, $x=\lambda_1(x)e_1+\lambda_2(x)e_2+\cdots+\lambda_n(x)e_n$ and 
$y=\lambda_1(y)e_1+\lambda_2(y)e_2+\cdots+\lambda_n(y)e_n$. This will be  referred to as the 
{\it strong operator commutativity} in the algebra $\V$. The algebras of $n\times n$ real/complex Hermitian matrices are primary examples of Euclidean Jordan algebras of rank $n$.  
\item [$\bullet$] If $\V$ is a finite dimensional real vector space and $p$ is a real homogeneous polynomial of degree $n$, hyperbolic with respect to a vector $e\in \V$, and  additionally complete and isometric \cite{bauschke et al}, then $(\V,\Rn,\lambda)$ becomes a FTvN system, where $\lambda(x)$ denotes the vector of roots of the univariate polynomial $t\rightarrow p(te-x)$ written in the decreasing order. In this setting, elements $x$ and $y$ commute if and only if 
$\lambda(x+y)=\lambda(x)+\lambda(y)$ (which is part of the definition of $p$ being `isometric').
\item [$\bullet$] If $(\V,\G,\gamma)$ is a normal decomposition system, then with $\W=\gamma(\V)-\gamma(V)$ and 
$\lambda=\gamma$, the triple $(\V,\W,\lambda)$ becomes a FTvN system. Here $\V$ is a  real inner product space and $\G$ is a closed subgroup of the orthogonal group of $\V$. Spectral sets and (real valued) functions are those that are invariant under elements of $\G$. In this setting, $x$ and $y$ commute in $(\V,\W,\lambda)$ if and only if there exists $A\in \G$ such that $x=A\gamma(x)$ and $y=A\gamma(y)$. The space of all $n\times n$ complex matrices is a primary example of a normal decomposition system. 
\item [$\bullet$] If $(\V,\G,F)$ is an Eaton triple, then with $\W:=F-F$ and $\lambda(x)$ denoting the unique element in 
$Orb(x)\cap F$, the triple $(\V,\W,\lambda)$ becomes a FTvN system. Here, $\V$ is a finite dimensional real inner product space, $\G$ is a closed subgroup of the orthogonal group of $\V$ and $F$ is a closed convex cone in $\V$.
(It is known that every Eaton triple is a normal decomposition system.)
\end{itemize}

\gap

We end this section with a remark about the `completion' of a FTvN system. Given a FTvN system $(\V,\W,\lambda)$, let $\overline{\V}$ and $\overline{\W}$ be 
the completions of the inner product spaces $\V$ and $\W$ respectively. Since $\lambda:\V\rightarrow \W$ is Lipschitz (see Proposition \ref{fts-basic properties}), there is a unique extension $\overline{\lambda}:\overline{\V}\rightarrow \overline{\W}$. Using elementary arguments and the Eberlein-Smulian Theorem \cite{megginson} (that in a Banach space, weak compactness is the same as weak sequential compactness), one can show that $(\overline{\V},\overline{\W},\overline{\lambda})$ is a FTvN system.

%%%%%%%%%%%%%%%%%%%%%%%%%%%%%%%%%%%%%%%%%%%%%%%%%%%%%%%%%%%%%%%%t
%
\section{Equivalent formulations of certain optimization problems over spectral sets}
Throughout this section, we consider  a FTvN system $(\V,\W,\lambda)$;  let 
$E$ be a spectral set in $\V$ and $\Phi$ be a spectral function on $\V$ with $\Phi=\phi\circ \lambda$ for some $\phi:\W\rightarrow \R$. Our goal is to reformulate an optimization problem over $E$ as a problem over $\lambda(E)$. In this section, we present several results dealing with combinations of linear/distance/convex functions and spectral functions. We start with an elementary result.

\begin{proposition}\label{elementary proposition}
{\it 
Suppose $A$ and $B$ are two sets in $\R$ with $B\subseteq A$. Then the following statements hold:
\begin{itemize}
\item [$(i)$] If every element of $A$ is less than or equal to some element of $B$, then $\sup\,A=\sup\,B.$ In this setting, attainment of one supremum implies that of the other. Moreover, if $\sup\,A$ is attained at $\overline{a}$, then $\overline{a}\in B$ and $\sup\,B$ is also attained at $\overline{a}$.
\item [$(ii)$] If every element of $A$ is greater than or equal to some element of $B$, then $\inf\,A=\inf\,B.$ In this setting, attainment of one infimum implies that of the other. Moreover, if $\inf\,A$ is attained at $\overline{a}$, then $\overline{a}\in B$ and $\inf\,B$ is also attained at $\overline{a}$.
\end{itemize}
}
\end{proposition}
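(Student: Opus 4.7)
The plan is to handle part $(i)$ directly from the definitions of supremum and the containment $B\subseteq A$, and then to note that part $(ii)$ follows by applying $(i)$ to the sets $-A:=\{-a:a\in A\}$ and $-B:=\{-b:b\in B\}$ (which swaps supremum with infimum and reverses the ``$\leq$ some element of'' hypothesis into ``$\geq$ some element of''). So the real work is in $(i)$.

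For the equality $\sup A=\sup B$ in $(i)$, I would proceed in two steps. First, from $B\subseteq A$ I get $\sup B\leq \sup A$ immediately. For the reverse, I would observe that the hypothesis means every $a\in A$ satisfies $a\leq b$ for some $b\in B$, hence $a\leq \sup B$; taking the supremum over $a\in A$ gives $\sup A\leq \sup B$. Combining the two yields $\sup A=\sup B$ (both could be $+\infty$, and the argument is insensitive to this).

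For the attainment part I would argue in both directions. If $\sup B$ is attained at some $\overline{b}\in B$, then $\overline{b}\in A$ (by $B\subseteq A$) and $\overline{b}=\sup B=\sup A$, so $\sup A$ is also attained at $\overline{b}$. Conversely, suppose $\sup A$ is attained at $\overline{a}\in A$. By hypothesis there exists $b\in B$ with $\overline{a}\leq b$; since $b\in B\subseteq A$ we also have $b\leq \sup A=\overline{a}$, forcing $b=\overline{a}$. Thus $\overline{a}\in B$ and $\overline{a}=\sup A=\sup B$, so $\sup B$ is attained at $\overline{a}$ as claimed.

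There is no real obstacle; the only mild subtlety is keeping track of what ``attainment of one supremum implies that of the other'' means in the direction ``$\sup A$ attained $\Rightarrow$ $\sup B$ attained at the \emph{same} point'', which is precisely what the pigeonhole-style squeeze $\overline{a}\leq b\leq \overline{a}$ delivers. Part $(ii)$ is then simply the negation-mirror of $(i)$ and requires no separate argument.
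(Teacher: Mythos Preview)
Your proof is correct and essentially matches the paper's argument: the paper establishes $\sup B\leq\sup A$ from $B\subseteq A$, then $\sup A\leq\sup B$ via the same ``$a\leq b\leq\sup B$'' step, and handles attainment with the identical squeeze $\overline{a}\leq\overline{b}\leq\beta=\alpha$. For part $(ii)$ the paper simply says ``the proof is similar to that of $(i)$'', whereas you reduce it to $(i)$ by passing to $-A$, $-B$; this is a cosmetic difference only.
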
 

\begin{proof}
$(i)$ The inclusion $B\subseteq A$ implies that (in the extended real number system) $\beta:=\sup\,B\leq \sup\,A=:\alpha$. 
On the other hand, for any $a\in A$, there is a $b\in B$ such that $a\leq b\leq \beta$. This implies that 
$\alpha\leq \beta$. Hence, $\alpha=\beta$. Now suppose $\alpha$ is attained at $\overline{a}\in A$. Then, there 
is a $\overline{b}\in B$ such that $\overline{a}\leq \overline{b}$. But then, 
$\alpha=\overline{a}\leq\overline{b}\leq \beta=\alpha$ showing  $\overline{a}=\overline{b}\in B$ and 
$\overline{b}=\beta$. Finally, if $\sup\,B$ is attained at $b_0$, then $b_0\in A$ (recall $B\subseteq A$) and 
$b_0=\beta=\alpha$; thus, $\sup\,A$ is attained at $b_0$.\\
$(ii)$ The proof is similar to that of $(i)$.
\end{proof}

A simple example that illustrates Item $(i)$ above is: $A$ is the interval $(0,1)$ in $\R$ and $B$ is the set of all rationals in $A$.

%%%%%%%%%%%%%%%%%%%%%%%%%%%%%%%%%%%%%%%%%%%%%%%%%%%%%%%%%%%%%%%%%%%%%%%%
\subsection{Optimizing a combination of a linear function and a spectral function over a spectral set}
We fix a $c\in \V$ and   define, for $x\in \V$ and $w\in \W$,
$$f(x):=\langle c,x\rangle,\,\,\, f^*(w):=\langle \lambda(c),w\rangle,\,\mbox{and}\,\,f_*(w):=\langle \,\wl(c),w\rangle.$$
In view of Item $(b)$ in Proposition \ref{fts-basic properties}, we have
\begin{equation}\label{linear sandwich inequalities}
  f_*(\lambda(x))\leq f(x)\leq f^*(\lambda(x)).
\end{equation}
We show below that for any spectral function $\Phi$ and any spectral set $E$, $\sup_{E}\,(f+\Phi) =\sup_{\lambda(E)}\,(f^*+\phi)$ and $\inf_{E}\,(f+\Phi) =\inf_{\lambda(E)}\,(f_*+\phi)$, with attainments leading to commutativity relations.
We derive this as a special case of a broader result dealing with a certain combination of $f$ and $\Phi$ instead of the sum of $f$ and $\Phi$. The motivation to consider such  an extension comes from the  work of Niezgoda \cite{niezgoda-commutation}.
\\

Given intervals $I$ and $J$ in $\R$, we  say that a function $L:I\times J\rightarrow \R$ is strictly increasing in the first variable if for each fixed $s^*\in J$, the function $t\rightarrow L(t,s^*)$ is strictly increasing over $I$.  Two simple examples are: $L(t,s)=t+s$ on $\R\times \R$ and $L(t,s)=ts$ on $\R\times (0,\infty)$.
The definition of $L$ increasing in the second variable is similar.

\begin{theorem}\label{linear L theorem}
{\it
Consider $f$, $f^*$, $f_*$, $\Phi$, and $E$ as given above.
Suppose the real valued function $L$ (defined on a product of two appropriate intervals in $\R$) is
strictly increasing in the first variable. Then, the following statements hold:
\begin{itemize}
\item [$(i)$]
$$\sup_{E} L(f,\Phi)=\sup_{\lambda(E)} L(f^*,\phi).$$
Also,  attainment of the supremum in one  problem implies that in the other. Moreover, if the supremum
of the problem  on the left is attained at $\barx\in E$, then $\barx$ commutes with $c$ in $(\V,\W,\lambda)$ and the maximum value is given by $L\Big (f^*(\lambda(\barx)),\phi(\lambda(\barx))\Big )$.
\item [$(ii)$]
$$\inf_{E} L(f,\Phi)=\inf_{\lambda(E)} L(f_*,\phi).$$
Also,  attainment of the infimum in one  problem implies that in the other. Moreover, if the infimum
of the problem  on the left is attained at $\barx\in E$, then $\barx$ commutes with $-c$ in $(\V,\W,\lambda)$ and the minimum value is given by $L\Big (f_*(\lambda(\barx)),\phi(\lambda(\barx)\Big )$.
\end{itemize}
 }
\end{theorem}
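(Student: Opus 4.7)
The plan is to reduce both parts to Proposition~\ref{elementary proposition} applied to suitable subsets of $\mathbb{R}$. For part $(i)$, introduce
$A := \{L(f(x),\Phi(x)) : x\in E\}$ and $B := \{L(f(x),\Phi(x)) : x\in E,\ x\text{ commutes with }c\}$,
so that $B \subseteq A$ and $\sup A$ equals the left-hand optimal value. By Definition~\ref{commute}, every element of $B$ simplifies to $L(f^*(\lambda(x)),\phi(\lambda(x)))$, and condition $(A3)$ produces, for each $q\in\lambda(E)$, some $x\in E$ commuting with $c$ with $\lambda(x)=q$; hence $B$ is exactly the value set $\{L(f^*(q),\phi(q)) : q\in\lambda(E)\}$, whose supremum is the right-hand optimal value.

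To invoke Proposition~\ref{elementary proposition}$(i)$, I must check that every element of $A$ is dominated by some element of $B$. Given $x\in E$, apply $(A3)$ with $q=\lambda(x)$ to pick $\tilde x \in E$ commuting with $c$ and satisfying $\lambda(\tilde x)=\lambda(x)$; then $\Phi(\tilde x)=\Phi(x)$, and the sandwich (\ref{linear sandwich inequalities}) combined with monotonicity of $L$ in the first variable gives
\[
L(f(x),\Phi(x)) \;\leq\; L(f^*(\lambda(x)),\phi(\lambda(x))) \;=\; L(f(\tilde x),\Phi(\tilde x)).
\]
Proposition~\ref{elementary proposition}$(i)$ now yields $\sup A=\sup B$, which is the claimed equality, and also delivers the transfer of attainment between the two problems.

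The attainment-to-commutativity refinement is the one step not handed to us by Proposition~\ref{elementary proposition} and is where strict monotonicity enters. If the left-hand supremum is attained at $\barx\in E$, the chain
\[
L(f(\barx),\Phi(\barx)) \;\leq\; L(f^*(\lambda(\barx)),\phi(\lambda(\barx))) \;\leq\; \sup_{\lambda(E)} L(f^*,\phi) \;=\; \sup_{E} L(f,\Phi) \;=\; L(f(\barx),\Phi(\barx))
\]
collapses to equalities throughout; strict monotonicity of $L$ in its first argument, together with $\Phi(\barx)=\phi(\lambda(\barx))$, then forces $\langle c,\barx\rangle=\langle \lambda(c),\lambda(\barx)\rangle$, which is exactly commutativity of $\barx$ with $c$, and the common optimal value is $L(f^*(\lambda(\barx)),\phi(\lambda(\barx)))$. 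Part $(ii)$ runs in complete parallel: replace $(A3)$ by Proposition~\ref{items equivalent to A3}$(b)$ to produce $\tilde x$ satisfying $\langle c,\tilde x\rangle = \langle \wl(c),\lambda(\tilde x)\rangle = f_*(\lambda(\tilde x))$, use the left half of (\ref{linear sandwich inequalities}) to reverse the chain of inequalities, and apply Proposition~\ref{elementary proposition}$(ii)$; the dual commutativity of $\barx$ with $-c$ then falls out from the same strict-monotonicity argument. The main subtlety throughout is simply bookkeeping: tracking which half of the sandwich is needed and which flavor of $(A3)$ supplies the commutative preimage, so that strictness of $L$ is applied at exactly the right place to convert a sandwiched equality into the pointwise identity defining commutativity.
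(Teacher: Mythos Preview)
Your proposal is correct and follows essentially the same approach as the paper. The only cosmetic difference is that you define $B$ as the subset of $A$ indexed by elements of $E$ commuting with $c$ and then identify it with $\{L(f^*(q),\phi(q)):q\in\lambda(E)\}$, whereas the paper defines $B$ directly as the latter set and then argues $B\subseteq A$ via $(A3)$; in either order the content is identical, and your handling of attainment, strict monotonicity, and part $(ii)$ matches the paper exactly.
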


\noindent We note that the first variable in $L$ varies over an interval that contains the sets $f(E)$ and $f^*(\lambda(E))$ and the second variable varies over an interval that contains $\Phi(E)$. 
Also, we write $\sup_{E} L(f,\Phi)$  an abbreviation of $\sup_{x\in E} L(f(x),\Phi(x))$, etc.
\\

\begin{proof} $(i)$ Consider the following sets in $\R$:
$$A:=\Big \{ L\Big (f(x),\Phi(x)\Big ):x\in E\Big \}\quad\mbox{and}\quad 
B:=\Big \{ L\Big (f^*(q),\phi(q)\Big ):q\in \lambda(E)\Big \}.$$
Because $L$ is increasing in the first variable and (\ref{linear sandwich inequalities}) holds for any $x\in E$, we see that
every element in $A$ is less than or equal to some element of $B$. Also, from $(A3)$, for any $q\in \lambda(E)$, there is an $x\in \V$ such that $\lambda(x)=q$ and $f(x)=\langle c,x\rangle =\langle \lambda(c),\lambda(q)\rangle=f^*(q)$. (As $E$ is a spectral set, $x\in E$.) This shows that $B\subseteq A$. From Item $(i)$ in Proposition \ref{elementary proposition}, 
$\sup_{E}L(f,\Phi)=\sup\,A=\sup\,B=\sup_{\lambda(E)}L(f^*,\phi)$. Moreover, attainment in one problem implies that in the other. Now suppose that $\sup_{E}L(f,\Phi)$ is attained at $\barx\in E$. Then, with $\lambda(\barx)=\overline{q}$, we have
$$\sup_{E}L(f,\Phi)=L\Big (f(\barx),\Phi(\barx)\Big )\leq L\Big (f^*(\overline{q}),\phi(\overline{q})\Big )\leq \sup_{\lambda(E)}L(f^*,\phi)=
\sup_{E}L(f,\Phi),$$
where the first inequality comes from (\ref{linear sandwich inequalities}) and the assumed property of $L$. 
It follows that 
$$L\Big (f(\barx),\Phi(\barx)\Big )= L\Big (f^*(\overline{q}),
\phi(\overline{q})\Big ).$$
 Since $L$ is strictly increasing in the first variable and $\Phi(\barx)=\phi(\overline{q})$, we must have $f(\barx)=f^*(\lambda(\barx))$, that is $\langle c,\barx\rangle=\langle \lambda(c),\lambda(\barx)\rangle$, proving the commutativity of $c$ and $\barx.$ Clearly, the maximum value is given by 
$L\Big (f^*(\lambda(\barx)),\phi((\lambda(\barx))\Big )$.\\
$(ii)$ The proof is similar to that of $(i)$. Here we consider the sets 
$$A:=\Big \{ L\Big (f(x),\Phi(x)\Big ):x\in E\Big \}\quad\mbox{and}\quad
B:=\Big \{ L\Big (f_*(q),\phi(q)\Big ):q\in \lambda(E)\Big \}.$$
We use (\ref{linear sandwich inequalities}), Item $(b)$ in Proposition \ref{items equivalent to A3}, and Item $(ii)$ in Proposition \ref{elementary proposition} to get the equality of the two infimums and their attainment. The additional statement regarding the commutativity of $\barx$ and $-c$ comes from the equality 
$f(\barx)=f_*(\lambda(\barx))$.
\end{proof}

\gap

We specialize the above theorem by putting $L(t,s)=t+s$ on $\R\times \R$.

\begin{corollary} \label{linear sup-inf corollary} 
{\it In the setting of the above theorem, we have the equalities 
\begin{equation}\label{sup-inf equality for linear functions}
\sup_{E}\,(f+\Phi)=\sup_{\lambda(E)}\,(f^*+\phi)\quad\mbox{and}\quad \inf_{E}\,(f+\Phi)=\inf_{\lambda(E)}\,(f_*+\phi).
\end{equation}
Additionally, attainment in these lead to commutativity relations: $\barx$ commutes with $c$ in the supremum case and  
$\barx$ commutes with $-c$ in the infimum case. 
}
\end{corollary}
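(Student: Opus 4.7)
The plan is to derive the corollary as a direct specialization of Theorem \ref{linear L theorem} by taking $L(t,s) := t+s$ on $\R \times \R$. First I would verify the hypothesis: for each fixed $s^* \in \R$, the map $t \mapsto t + s^*$ is strictly increasing on $\R$, so $L$ is strictly increasing in the first variable. The two factor intervals may simply be taken to be $\R$, which certainly contains the ranges of $f$, $f^*$, $f_*$ on their respective domains as well as the range of $\Phi$ on $E$.

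With this choice of $L$, the identity $L(f(x),\Phi(x)) = f(x) + \Phi(x)$ holds pointwise on $\V$, and analogously $L(f^*(q),\phi(q)) = f^*(q)+\phi(q)$ and $L(f_*(q),\phi(q)) = f_*(q)+\phi(q)$ on $\W$. So the supremum/infimum of $L(f,\Phi)$ over $E$ coincides with that of $f + \Phi$, and similarly on $\lambda(E)$. Part $(i)$ of Theorem \ref{linear L theorem} then yields the first equality in (\ref{sup-inf equality for linear functions}) and tells us that if the supremum is attained at $\barx \in E$, then $\barx$ commutes with $c$; part $(ii)$ yields the second equality and the fact that a minimizer $\barx$ commutes with $-c$.

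There is essentially no obstacle here, since all the real content, most notably the use of condition $(A3)$ to produce a point $x \in E$ with $\lambda(x) = q$ and $\langle c,x\rangle = f^*(\lambda(x))$ (respectively $f_*(\lambda(x))$), and the strict-monotonicity argument that extracts the commutativity equality $\langle c,\barx\rangle = \langle \lambda(c),\lambda(\barx)\rangle$ from the attainment, has already been absorbed into the proof of Theorem \ref{linear L theorem}. Thus the corollary reduces to checking the trivial monotonicity of addition and rewriting $L$ as a sum.
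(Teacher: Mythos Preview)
Your proposal is correct and matches the paper's own approach exactly: the corollary is obtained by specializing Theorem \ref{linear L theorem} with $L(t,s)=t+s$, after noting that this $L$ is strictly increasing in the first variable. There is nothing to add.
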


\gap

We end this section with an application to variational inequality problems and state one consequence.
Let $(\V,\W,\lambda)$ be a FTvN system,  $E$ be a set in $\V$, and
 $G:\V\rightarrow \V$ be an arbitrary map. Then, the {\it variational 
inequality problem} VI$(G,E)$ \cite{facchinei-pang}
is to find an $a\in E$ such that
$$\langle G(a),x-a\rangle \geq 0\,\,\mbox{for all}\,\, x\in E.$$

When $E$ is a closed convex cone in $\V$, VI$(G,E)$ becomes a {\it complementarity problem} \cite{facchinei-pang}:
Find $a\in \V$ such that
$$a\in E,\,G(a)\in E^*,\,\,\mbox{and}\,\,\langle a,G(a)\rangle=0,$$
where $E^*$ is the dual of $E$ in $\V$  given by
$E^*:=\{x\in \V:\langle x,y\rangle \geq 0\,\,\forall\,y\in E\}.$
\\

Corollary \ref{linear sup-inf corollary} leads to the following commutation principle which can be regarded as a generalization and an improvement of Proposition 8 in \cite{ramirez et al}.

\begin{theorem}\label{vi result}
{\it Suppose $E$ is a spectral set and
 $a$ solves \mbox{VI}$(G,E)$.   Then $a$ and $-G(a)$ commute in the given FTvN system.
}
\end{theorem}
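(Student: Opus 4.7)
The plan is to reduce the variational inequality condition to an infimum problem of a linear function over the spectral set $E$ and then invoke the commutativity assertion in Corollary \ref{linear sup-inf corollary}.

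First, I would rewrite $\mbox{VI}(G,E)$: if $a\in E$ satisfies $\langle G(a), x-a\rangle \geq 0$ for all $x\in E$, then, setting $c := G(a)$ and $f(x):=\langle c,x\rangle$, we obtain $\langle c, a\rangle \leq \langle c, x\rangle$ for every $x\in E$. In other words, the linear function $f$ attains its infimum over $E$ at $x=a$, so
$$\inf_{x\in E}\,\langle c,x\rangle = \langle c,a\rangle.$$

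Next, I would apply Corollary \ref{linear sup-inf corollary} with this particular $c$ and with the trivial spectral function $\Phi\equiv 0$ (which is of the form $\phi\circ\lambda$ for $\phi\equiv 0$, hence spectral, and the hypothesis of the corollary is satisfied since $E$ is given to be a spectral set). The infimum statement of the corollary asserts that the infimum of $f+\Phi = f$ over $E$ equals the infimum of $f_*+\phi = f_*$ over $\lambda(E)$, and moreover that any point $\overline{x}\in E$ attaining the infimum must commute with $-c$ in the FTvN system $(\V,\W,\lambda)$. Since $a$ attains this infimum by the previous step, we conclude that $a$ commutes with $-c = -G(a)$, which is exactly the desired conclusion.

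There is essentially no obstacle beyond recognizing that the VI problem for a fixed solution $a$ is simply a linear minimization problem with linear term $\langle G(a), \cdot\rangle$, so that the machinery of Corollary \ref{linear sup-inf corollary} applies verbatim. The only point requiring a moment of care is that $\Phi\equiv 0$ is a bona fide spectral function, which is immediate. All the heavy lifting, namely the identity $\inf_{E} f = \inf_{\lambda(E)} f_*$ together with the commutativity of the minimizer with $-c$, is already packaged into the corollary; the improvement over \cite{ramirez et al}, Proposition 8 comes for free from working in the abstract FTvN framework with no differentiability or smoothness hypothesis on $G$.
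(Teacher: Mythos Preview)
Your proof is correct and follows essentially the same approach as the paper: set $c=G(a)$, observe that the VI condition makes $a$ a minimizer of $\langle c,\cdot\rangle$ over the spectral set $E$, and invoke Corollary~\ref{linear sup-inf corollary} with $\Phi=\phi\equiv 0$ to conclude that $a$ commutes with $-c=-G(a)$.
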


\begin{proof}
If $a$ solves VI$(G,E)$, then $\langle G(a),x\rangle \geq \langle G(a),a\rangle$ for all $x\in E$. Hence, with $c:=G(a)$,
we see that $a$ is a minimizer of the problem $\min\{\langle c,x\rangle:x\in E\}$.
By Corollary \ref{linear sup-inf corollary} with $\phi=0$, $a$ commutes with $-c$ (which is $-G(a)$).
\end{proof}

\gap

We now state a result that is similar to (actually generalizes) Theorem 1.3 in \cite{gowda-jeong}.

\begin{theorem}\label{differentiable opt problem in ftvn}
{\it Let  $(\V,\W,\lambda)$ be a FTvN system where $\V$ is a Hilbert space,   $E$ be a convex spectral set, and $\Phi$ be a convex spectral function.
Further, let $L$ (defined on a product of appropriate intervals in $\R$) be strictly increasing in the first variable and increasing in the second variable. Suppose $h:\V\rightarrow \R$ is   Fr\'{e}chet differentiable
and  $a$ is a local minimizer of the problem $\min_{E}L\Big (h,\Phi\Big )$.
Then $a$ and $-h^{\prime}(a)$  commute in  $(\V,\W,\lambda)$.}
\end{theorem}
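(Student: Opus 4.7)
The plan is to reduce the problem to showing that $a$ minimizes the linear function $x\mapsto \langle h^{\prime}(a),x\rangle$ over the $\lambda$-orbit $[a]$, and then to apply the commutativity characterization of attainment established after Definition \ref{commute} together with equation (\ref{x and -c commute}).

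First, I would isolate a single orbit direction. Fix any $x\in [a]$. Since $E$ is spectral and $a\in E$, we have $x\in E$, and by convexity of $E$ the segment $a_t:=a+t(x-a)=(1-t)a+tx$ lies in $E$ for $t\in [0,1]$. Because $\Phi$ is spectral and $\lambda(x)=\lambda(a)$, we have $\Phi(x)=\Phi(a)$, and convexity of $\Phi$ gives
$$\Phi(a_t)\leq (1-t)\Phi(a)+t\Phi(x)=\Phi(a).$$
Since $L$ is increasing in its second variable, this yields $L(h(a_t),\Phi(a_t))\leq L(h(a_t),\Phi(a))$. On the other hand, $a$ is a local minimizer of $L(h,\Phi)$ over $E$, so for all sufficiently small $t>0$, $L(h(a_t),\Phi(a_t))\geq L(h(a),\Phi(a))$. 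Combining the two inequalities,
$$L(h(a_t),\Phi(a))\geq L(h(a),\Phi(a)),$$
and strict monotonicity of $L$ in its first variable forces $h(a_t)\geq h(a)$ for small $t>0$.

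Next, I would extract the first-order information. Dividing by $t>0$ and invoking Fr\'echet differentiability of $h$ as $t\downarrow 0$,
$$\langle h^{\prime}(a),x-a\rangle \geq 0\quad \text{for every }x\in [a].$$
Thus $a$ attains $\min\{\langle h^{\prime}(a),x\rangle:x\in [a]\}$. By Proposition \ref{fts-basic properties}$(d)$, this minimum equals $\langle\wl(h^{\prime}(a)),\lambda(a)\rangle$, hence $\langle h^{\prime}(a),a\rangle=\langle\wl(h^{\prime}(a)),\lambda(a)\rangle$. By (\ref{x and -c commute}) applied with $c=h^{\prime}(a)$, this is exactly the statement that $a$ commutes with $-h^{\prime}(a)$ in $(\V,\W,\lambda)$, which is the desired conclusion.

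The main obstacle I anticipate is the clean coordination of the two monotonicity assumptions on $L$ with the convexity hypotheses: one needs the convexity of $E$ to stay inside $E$ along segments, convexity of $\Phi$ to upper-bound $\Phi(a_t)$ by $\Phi(a)$, the \emph{increasing} property of $L$ in its second slot to exploit that upper bound, and finally the \emph{strictly} increasing property of $L$ in its first slot to cancel $\Phi$ out of the inequality and deduce $h(a_t)\geq h(a)$. Once that chain is in place, the passage from a first-order inequality on the orbit to commutativity is immediate from the orbit-level minimization result already proved in Section 2.
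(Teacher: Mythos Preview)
Your proposal is correct and follows essentially the same approach as the paper: restrict to the orbit $[a]$, use convexity of $E$ and $\Phi$ together with the monotonicity assumptions on $L$ to deduce $h(a_t)\geq h(a)$ for small $t>0$, differentiate to get $\langle h'(a),x-a\rangle\geq 0$ on $[a]$, and conclude commutativity. The only cosmetic difference is that the paper packages the final step as ``$a$ solves $\mathrm{VI}(h',[a])$'' and invokes Theorem~\ref{vi result}, whereas you unwind that and appeal directly to Proposition~\ref{fts-basic properties}$(d)$ and (\ref{x and -c commute}).
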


\noindent Note: Because $\V$ is a Hilbert space, by the Riesz representation theorem, we can regard the continuous linear functional
$h^\prime(a)$ as an element of $\V$.
\\

\begin{proof} 
Take any $x\in [a]$. Since $E$ is spectral and convex, for any $0\leq t\leq 1$, $y:=(1-t)a+tx\in E$. As $a$ is a local minimizer,   
for all positive $t$ near zero, we have
$$L\Big (h(a),\Phi(a)\Big )\leq L\Big (h(y),\Phi (y)\Big ).$$ Fix such a $t$.
As $\Phi$ is convex and spectral, 
$ \Phi (y)\leq (1-t)\Phi(a)+t\Phi(x)= (1-t)\Phi(a)+t\Phi(a)=\Phi(a).$ Since $L$ is increasing in the second variable, we have 
$$L\Big (h(y),\Phi (y)\Big )\leq L\Big (h(y),\Phi (a)\Big ).$$
Thus,
$$L\Big (h(a),\Phi(a)\Big )\leq L\Big (h(y),\Phi (a)\Big ).$$
Since $L$ is strictly increasing in the first variable,  
$$h(a)\leq h(y)=h\Big ( (1-t)a+tx\Big).$$
As this holds for all positive $t$ near zero, 
it follows that $\langle h^\prime(a),x-a\rangle \geq 0$ for all $x\in [a]$. 
So, $a$ solves VI$(h^\prime,[a])$.
By the previous result, $a$ and $-h^{\prime}(a)$ commute in  $(\V,\W,\lambda)$.
\end{proof}

We highlight one special case by taking $L(t,s)=t+s$ and $\Phi=0$.

\begin{corollary}
{\it Let  $(\V,\W,\lambda)$ be a FTvN system where $\V$ is a Hilbert space and    $E$ be a convex spectral set.
Suppose $h:\V\rightarrow \R$ is   Fr\'{e}chet differentiable
and  $a$ is a local minimizer of the problem $\min_{E}\,h$.
Then $a$ and $-h^{\prime}(a)$  commute in  $(\V,\W,\lambda)$.}
\end{corollary}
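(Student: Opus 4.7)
The plan is to derive this corollary as a direct specialization of Theorem \ref{differentiable opt problem in ftvn}. Specifically, I would take $L:\R\times\R\to\R$ to be $L(t,s):=t+s$ and $\Phi$ to be the zero function on $\V$. Then $L$ is strictly increasing in the first variable (since adding a fixed $s^*$ preserves strict monotonicity in $t$) and increasing in the second variable; $\Phi\equiv 0$ is trivially a convex spectral function, since it factors as $0\circ\lambda$ and constants are convex. The hypotheses on $E$, on $h$ being Fr\'{e}chet differentiable, and on $a$ being a local minimizer are all transferred verbatim from the hypotheses of the corollary.

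With these choices, the objective $L(h,\Phi)$ reduces pointwise to $h+0=h$, so the local minimization problem $\min_E L(h,\Phi)$ is identical to $\min_E h$. Hence $a$ is a local minimizer of $\min_E L(h,\Phi)$, and all the hypotheses of Theorem \ref{differentiable opt problem in ftvn} are met. Applying that theorem yields immediately that $a$ and $-h'(a)$ commute in $(\V,\W,\lambda)$, which is the desired conclusion.

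There is essentially no obstacle here: the corollary is exactly what Theorem \ref{differentiable opt problem in ftvn} gives in the special case $L(t,s)=t+s$ and $\Phi=0$, so the proof amounts to a one-line verification followed by citation of the theorem.
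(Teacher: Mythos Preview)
Your proposal is correct and matches the paper's own approach exactly: the paper introduces this corollary with the sentence ``We highlight one special case by taking $L(t,s)=t+s$ and $\Phi=0$,'' and offers no further proof, so your verification that these choices satisfy the hypotheses of Theorem~\ref{differentiable opt problem in ftvn} is precisely what is intended.
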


%%%%%%%%%%%%%%%%%%%%%%%%%%%%%%%%%%%%%%%%%%%%%%%%%%%%%%%%%%%%%%%%%%%%%%%%%
\subsection{Optimizing a combination of a distance function and a spectral function over a spectral set} 
We fix  $c\in \V$ and define, for $x\in \V$ and $w\in \W$,
$$g(x):=||c-x||,\,\,\, g^*(w):=||\,\wl(c)-w||,\,\,\mbox{and}\,\,g_*(w):=||\lambda(c)-w||.$$
In view of Item $(c)$ in Proposition \ref{fts-basic properties}, we have
\begin{equation}\label{distance sandwich inequalities}
g_*\Big ( \lambda(x)\Big )\leq g(x)\leq g^*\Big (\lambda(x)\Big ).
\end{equation}

Analogous to Theorem \ref{linear L theorem} we have the following. 

\begin{theorem}\label{distance L theorem}
{\it
Consider $g$, $g^*$, $g_*$, $\Phi$, and $E$ as given above.
Suppose the real valued function $L$ (defined on a product of two appropriate intervals in $\R$) is
strictly increasing in the first variable. Then, the following statements hold:
\begin{itemize}
\item [$(i)$]
$$\sup_{E} L(g,\Phi)=\sup_{\lambda(E)} L(g^*,\phi).$$
Also,  attainment of the supremum in one  problem implies that in the other. Moreover, if the supremum
of the problem  on the left is attained at $\barx\in E$, then $\barx$ commutes with $-c$ in $(\V,\W,\lambda)$ and the maximum value is given by $L\Big (g^*(\lambda(\barx)),\phi(\lambda(\barx))\Big )$.
\item [$(ii)$]
$$\inf_{E} L(g,\Phi)=\inf_{\lambda(E)} L(g_*,\phi).$$
Also,  attainment of the infimum in one  problem implies that in the other. Moreover, if the infimum
of the problem  on the left is attained at $\barx\in E$, then $\barx$ commutes with $c$ in $(\V,\W,\lambda)$ and the minimum value is given by $L\Big (g_*(\lambda(\barx)),\phi(\lambda(\barx)\Big )$.
\end{itemize}
 }
\end{theorem}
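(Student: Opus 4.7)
The plan is to mirror the proof of Theorem~\ref{linear L theorem} step by step, with the distance sandwich \eqref{distance sandwich inequalities} playing the role of \eqref{linear sandwich inequalities} and with the two halves of Proposition~\ref{items equivalent to A3} supplying orbit representatives.

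For part $(i)$, I would introduce
$$A := \Big\{ L\big(g(x),\Phi(x)\big) : x \in E \Big\},\quad B := \Big\{ L\big(g^*(q),\phi(q)\big) : q \in \lambda(E)\Big\},$$
and show that every element of $A$ is bounded above by some element of $B$ using the right half of \eqref{distance sandwich inequalities} together with $\Phi=\phi\circ\lambda$ and monotonicity of $L$ in its first slot. For the inclusion $B\subseteq A$, I would invoke Proposition~\ref{items equivalent to A3}$(b)$: for each $q\in\lambda(E)$, pick $x\in\V$ with $\lambda(x)=q$ and $||c-x||=||\wl(c)-\lambda(x)||$; spectrality of $E$ places $x\in E$, and the equality is exactly $g(x)=g^*(q)$. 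Proposition~\ref{elementary proposition}$(i)$ then delivers equality of the two suprema and two-way transfer of attainment. If $\barx\in E$ attains $\sup_{E}L(g,\Phi)$, the squeeze
$$L\big(g(\barx),\Phi(\barx)\big)\leq L\big(g^*(\lambda(\barx)),\phi(\lambda(\barx))\big)\leq \sup_{\lambda(E)}L(g^*,\phi)=\sup_{E}L(g,\Phi)$$
forces equality throughout; since $\Phi(\barx)=\phi(\lambda(\barx))$ and $L$ is strictly monotone in its first slot, this yields $g(\barx)=g^*(\lambda(\barx))$, i.e.\ $||c-\barx||=||\wl(c)-\lambda(\barx)||$, which by \eqref{x and -c commute} is commutativity of $\barx$ with $-c$.

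Part $(ii)$ is the symmetric construction: use the left half of \eqref{distance sandwich inequalities}, Proposition~\ref{elementary proposition}$(ii)$, and Proposition~\ref{items equivalent to A3}$(a)$ (which gives $x\in\V$ with $\lambda(x)=q$ and $||c-x||=||\lambda(c)-\lambda(x)||$, i.e.\ $g(x)=g_*(q)$). The same attainment squeeze yields $g(\barx)=g_*(\lambda(\barx))$, which by Proposition~\ref{commutativity proposition} is equivalent to $\barx$ commuting with $c$.

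I expect no substantive obstacle beyond a sign/bookkeeping check: on the $\sup$ side one works with $\wl(c)=-\lambda(-c)$ and obtains commutativity with $-c$, whereas on the $\inf$ side one works with $\lambda(c)$ and obtains commutativity with $c$. Once this pairing between the two halves of the sandwich, the two halves of Proposition~\ref{items equivalent to A3}, and the two equivalent forms of commutativity is tracked correctly, the argument is essentially a verbatim transcription of the linear case.
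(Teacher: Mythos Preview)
Your proposal is correct and essentially identical to the paper's own proof: the paper also defines the sets $A$ and $B$ by replacing $f$ with $g$, invokes \eqref{distance sandwich inequalities} together with Proposition~\ref{items equivalent to A3}$(b)$ and Proposition~\ref{elementary proposition}$(i)$ for part $(i)$, and Proposition~\ref{items equivalent to A3}$(a)$ with Proposition~\ref{elementary proposition}$(ii)$ for part $(ii)$, deriving commutativity from $g(\barx)=g^*(\lambda(\barx))$ via \eqref{x and -c commute} and from $g(\barx)=g_*(\lambda(\barx))$ via Proposition~\ref{commutativity proposition}, exactly as you outline.
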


\gap

\begin{proof} $(i)$ The proof is similar to that of Item $(i)$ in Theorem \ref{linear L theorem}. We define sets $A$ and $B$ appropriately (by replacing $f$ by $g$),  
use (\ref{distance sandwich inequalities}), Item $(b)$ in Proposition \ref{items equivalent to A3},  and Item $(i)$ in Proposition \ref{elementary proposition} to get the equality of the two supremums. The attainment statement comes from the equality
$g(\barx)=g^*(\lambda(\barx))$, which, by (\ref{x and -c commute}) gives the commutativity of $\barx$ and $-c$.\\
$(ii)$ The proof is analogous to that of Item $(ii)$ in Theorem \ref{linear L theorem}. We replace $f$ by $g$,  
use (\ref{distance sandwich inequalities}), Item $(a)$ in Proposition \ref{items equivalent to A3}, and Item $(ii)$ in Proposition \ref{elementary proposition} to get the equality of the two infimums. The attainment statement comes from the equality $g(\barx)=g_*(\lambda(\barx))$, which gives the commutativity of $\barx$ and $c$. 
\end{proof} 

\noindent{\bf Remarks.} While comparing Theorems \ref{linear L theorem} and \ref{distance L theorem}, the reader will notice that in the supremum case (or the infimum case), commutativity statements are reversed: In the linear case, $\barx$ commutes with $c$ and in the distance case, $\barx$ commutes with $-c$. As we shall see in the next section this has to do with the 
both $f$ and $g$ being convex and derivatives of $f$ and $g^2$ at $\barx$ commuting with $\barx$. 
\\
 
We now specialize the above theorem by letting $L(t,s)=t+s$ on $\R\times \R$.

\begin{corollary} \label{distance sup-inf corollary}
{\it In the setting of the above theorem, we have the equalities
\begin{equation}\label{sup-inf equality for distance functions}
\sup_{E}\,(g+\Phi)=\sup_{\lambda(E)}\,(g^*+\phi)\quad\mbox{and}\quad \inf_{E}\,(g+\Phi)=\inf_{\lambda(E)}\,(g_*+\phi).
\end{equation}
Additionally, attainment in these lead to commutativity relations: $\barx$ commutes with $-c$ in the supremum case and  
$\barx$ commutes with $c$ in the infimum case. 
}
\end{corollary}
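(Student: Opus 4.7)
The plan is to obtain this corollary as an immediate specialization of Theorem \ref{distance L theorem}. Take $L : \mathbb{R} \times \mathbb{R} \to \mathbb{R}$ defined by $L(t,s) := t + s$. For any fixed $s^* \in \mathbb{R}$, the map $t \mapsto L(t, s^*) = t + s^*$ is strictly increasing on $\mathbb{R}$, so $L$ satisfies the hypothesis of Theorem \ref{distance L theorem} (strict monotonicity in the first variable). There is no issue with the product of intervals on which $L$ is defined, since we may take both intervals to be all of $\mathbb{R}$, which certainly contains the ranges of $g$, $g^*$, $g_*$, and $\phi$.

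With this choice of $L$, the combined functions become $L(g,\Phi) = g + \Phi$, $L(g^*, \phi) = g^* + \phi$, and $L(g_*, \phi) = g_* + \phi$. Item $(i)$ of Theorem \ref{distance L theorem} then yields the first equality in (\ref{sup-inf equality for distance functions}), together with the transfer of attainment; moreover, if the supremum on the left is attained at some $\barx \in E$, the theorem states that $\barx$ commutes with $-c$ in $(\V, \W, \lambda)$. Applying Item $(ii)$ in the same way gives the second equality in (\ref{sup-inf equality for distance functions}), along with the attainment transfer and the conclusion that $\barx$ commutes with $c$ when the infimum is attained at $\barx$.

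Since this is a direct specialization, there is essentially no obstacle. The only point worth verifying is that the sign asymmetry recorded in the corollary — commutativity with $-c$ in the supremum case versus with $c$ in the infimum case — is inherited correctly. This traces back to the definitions $g^*(w) = \|\wl(c) - w\|$ and $g_*(w) = \|\lambda(c) - w\|$, together with the sandwich (\ref{distance sandwich inequalities}): the supremum is controlled by $g^*$ (hence $\wl(c) = -\lambda(-c)$ enters, corresponding to commutativity with $-c$ via (\ref{x and -c commute})), while the infimum is controlled by $g_*$ (hence $\lambda(c)$ enters, corresponding to commutativity with $c$). Both are precisely the conclusions listed in Theorem \ref{distance L theorem}, so the corollary follows.
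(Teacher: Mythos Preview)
Your proof is correct and follows exactly the paper's approach: the paper obtains this corollary by specializing Theorem~\ref{distance L theorem} with $L(t,s)=t+s$ on $\R\times\R$, and your verification that this $L$ is strictly increasing in the first variable and that the commutativity conclusions transfer as stated is precisely what is needed.
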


We end this section by describing  the distance between two spectral sets. Consider spectral sets $E$ and $F$ in $\V$. Then,
$$\inf\Big \{||x-y||:x\in E,y\in F\Big \}=\inf\Big \{||q-p||:q\in \lambda(E),p\in \lambda(F)\Big \}.
$$
This follows from (\ref{sup-inf equality for distance functions}) with $\phi=0$:
$$\inf_{x\in E,y\in F}||x-y||=\inf_{x\in E}\inf_{y\in F}||x-y||=\inf_{x\in E}\inf_{y\in F}||\lambda(x)-\lambda(y)||=
\inf_{q\in \lambda(E)}\inf_{p\in \lambda(F)}||q-p||.
$$
In a similar way, we have
$$\sup_{x\in E}\,\inf_{y\in F}||x-y||=\sup_{q\in \lambda(E)}\,\inf_{p\in \lambda(E)}||q-p||.$$
This leads to the equality of Hausdorff distances
\begin{equation} \label{hausdorff equality}
d_H(E,F)=d_H(\lambda(E),\lambda(F)),
\end{equation}
where $d_H(E,F)$ is given by
$$d_H(E,F):=\max\Big \{\sup_{x\in E}\,\inf_{y\in F}||x-y||,\,\sup_{y\in F}\,\inf_{x\in E}||y-x||\Big\},$$
etc.

%%%%%%%%%%%%%%%%%%%%%%%%%%%%%%%%%%%%%%%%%%%%%%%%%%%%%%%%%%%%%%%%%%%%
\subsection{Optimizing a combination of a convex function and a spectral function over a  spectral set}
In the previous sections, we considered the problems dealing with  linear and distance functions. Noting that these functions are convex, one may raise the question of extending the results of the previous sections to
convex functions. In this section, we provide some  answers  in the setting of finite dimensional spaces.

Consider a FTvN system $(\V,\W,\lambda)$, where $\V$ is  finite dimensional.
Let $h:\V\rightarrow \R$ be a convex  function. Then, $h$ is continuous and
can be realized as the
supremum of affine functions (\cite{ekeland-temam}, page 13):  For some collection $\{(c_i,\alpha_i):i\in I\}$ in $\V\times \R$,
$$h(x)=\sup_{i\in I}\big [ \langle c_i,x\rangle+\alpha_i\big ]\quad (x\in \V).$$
Correspondingly, we define two  extended real valued  functions on $\W$:
$$h^*(w):=\sup_{i\in I}\big [ \langle \lambda(c_i),w\rangle+\alpha_i\big ]\quad (w\in \W)$$ and
$$h_*(w):=\sup_{i\in I}\big [\, \langle \,\wl(c_i),w\rangle+\alpha_i\big ]\quad (x\in \W).$$
These functions, as  supremums of affine functions, are convex (possibly, extended real valued).
Moreover, from Proposition \ref{fts-basic properties}, we have the inequalities
$$\langle\, \wl(c_i),\lambda(x)\rangle+\alpha_i\leq \langle c_i,x\rangle+\alpha_i \leq \langle \lambda(c_i),\lambda(x)\rangle+\alpha_i,$$ and consequently,
\begin{equation}\label{lower and upper convex functions}
h_*(\lambda(x))\leq h(x)\leq h^*(\lambda(x))\quad \mbox{for all}\,\,x\in \V.
\end{equation}
In the result below, we show  that {\it on $\lambda(\V)$,
$h^*$ is finite valued and is independent of the representation of $h$.}
But first, we describe a particular representation of $h$ based on subdifferentials.\\
For any $x^*\in \V$, let $\partial h(x^*)$ denote the subdifferential of $h$ at $x^*$ (which is nonempty, compact and convex). By definition, $c\in \partial h(x^*)$ means that $h(x)\geq h(x^*)+\langle c,x-x^*\rangle$ for all $x\in \V$. Then, letting $\alpha:=h(x^*)-\langle c,x^*\rangle$, we have
$h(x)\geq \langle c,x\rangle+\alpha$ for all $x\in \V$ with equality at $x^*$. This gives the representation  
\begin{equation} \label{subdifferential representation}
h(x)=\sup_{(c,\alpha)\in \Omega}\big [\langle c,x\rangle+\alpha\big ],
\end{equation}
where $$\Omega:=\Big \{(c,\alpha)\in \V\times \R: \,\,\mbox{for some}\,\,x^*\in \V,\,c\in \partial h(x^*)\,\,\mbox{and}\,\,\alpha=h(x^*)-\langle c,x^*\rangle\Big \}.$$

\gap

\begin{theorem} \label{convex theorem}
{\it Let $(\V,\W,\lambda)$ be a FTvN system where $\V$ is finite dimensional. Let $E$ be a spectral set in $\V$ and $\Phi$ be a spectral function on $\V$. 
Suppose $h:\V\rightarrow \R$ is convex. By fixing a representation of $h$, we
define the corresponding extended real valued convex function $h^*$ on $\W$.  Let $L$ (defined on the product of appropriate intervals in $\R$) be strictly increasing in the first variable.
Then we have the following:
\begin{itemize}
\item [$(a)$] For any $q\in  \lambda(\V)$, $h^*(q)=\max\Big \{h(x):\lambda(x)=q\Big \}<\infty$. Hence,  on $\lambda(\V)$, $h^*$ is independent of the representation of $h$.
\item [$(b)$] \hspace{1cm}$\sup_{E}L\Big (h,\Phi\Big )=\sup_{\lambda(E)} L\Big (h^*,\phi\Big ).$ \\
Also, attainment of supremum in one problem implies that in the other. Moreover, if the problem on the left is attained at $\overline{x}\in E$, then $\overline{x}$ commutes with every element in the subdifferential of $h$ at $\overline{x}$.
\item [$(c)$] $h_*(q)\leq \min \Big \{h(x):\lambda(x)=q\Big \} \quad \Big (q\in \lambda(\V)\Big )$.
\item [$(d)$] $\inf_{\lambda(E)}L\Big (h_*,\phi \Big )\leq \inf_{E}L\Big (h,\Phi\Big ).$
\end{itemize}
}
\end{theorem}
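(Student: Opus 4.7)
The plan is to treat parts $(a)$--$(d)$ in order, since part $(a)$ is the workhorse that identifies the abstract quantity $h^*(q)$ with an explicit maximum over an orbit, after which the sandwich inequality (\ref{lower and upper convex functions}) drives everything else. For $(a)$, I would fix $q\in \lambda(\V)$. For each affine piece $(c_i,\alpha_i)$ in the chosen representation of $h$, apply $(A3)$ to the pair $(c_i,q)$ to obtain $x_i\in \V$ with $\lambda(x_i)=q$ and $\langle c_i,x_i\rangle=\langle \lambda(c_i),q\rangle$. Then $h(x_i)\geq \langle c_i,x_i\rangle+\alpha_i=\langle \lambda(c_i),q\rangle+\alpha_i$, and taking supremum over $i$ yields $\sup\{h(x):\lambda(x)=q\}\geq h^*(q)$. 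The reverse inequality is immediate from (\ref{lower and upper convex functions}). Because the orbit $\lambda^{-1}(\{q\})$ is closed, sits on a sphere by $(A1)$, hence is compact in the finite dimensional $\V$, and $h$ is continuous, the supremum is attained, giving $h^*(q)=\max<\infty$. Representation-independence on $\lambda(\V)$ is then automatic, since this maximum is intrinsic to $h$.

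For $(b)$, I would follow the template of Theorem \ref{linear L theorem}. Define $A:=\{L(h(x),\Phi(x)):x\in E\}$ and $B:=\{L(h^*(q),\phi(q)):q\in \lambda(E)\}$. To see $B\subseteq A$: given $q\in \lambda(E)$, part $(a)$ supplies $x^*$ with $\lambda(x^*)=q$ and $h(x^*)=h^*(q)$; spectrality of $E$ puts $x^*\in E$, so $L(h^*(q),\phi(q))=L(h(x^*),\Phi(x^*))\in A$. To see every $a\in A$ is bounded above by an element of $B$: for $x\in E$, strict monotonicity of $L$ in the first slot together with (\ref{lower and upper convex functions}) gives $L(h(x),\Phi(x))\leq L(h^*(\lambda(x)),\phi(\lambda(x)))\in B$. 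Proposition \ref{elementary proposition}$(i)$ then delivers the supremum equality and the attainment equivalence. For the commutativity assertion, suppose the supremum on the left is attained at $\barx\in E$ with $\overline{q}:=\lambda(\barx)$. The chain
\[
L(h(\barx),\Phi(\barx))\leq L(h^*(\overline{q}),\phi(\overline{q}))\leq \sup_{\lambda(E)}L(h^*,\phi)=\sup_E L(h,\Phi)=L(h(\barx),\Phi(\barx)),
\]
together with $\Phi(\barx)=\phi(\overline{q})$ and strict monotonicity of $L$ in the first variable, forces $h(\barx)=h^*(\overline{q})$. Now take any $c\in \partial h(\barx)$ and set $\alpha:=h(\barx)-\langle c,\barx\rangle$, so that $(c,\alpha)\in \Omega$ and the affine piece $\langle c,\cdot\rangle+\alpha$ enters the subdifferential representation (\ref{subdifferential representation}). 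By part $(a)$ (applied to that representation, which defines the same $h^*$ on $\lambda(\V)$), we have $h^*(\overline{q})\geq \langle \lambda(c),\overline{q}\rangle+\alpha$, whence
\[
h(\barx)=h^*(\overline{q})\geq \langle \lambda(c),\lambda(\barx)\rangle+\alpha\geq \langle c,\barx\rangle+\alpha=h(\barx),
\]
using $(A2)$ in the second inequality. The chain collapses to $\langle c,\barx\rangle=\langle \lambda(c),\lambda(\barx)\rangle$, i.e., $\barx$ commutes with $c$.

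Parts $(c)$ and $(d)$ are essentially corollaries of (\ref{lower and upper convex functions}). For $(c)$, every $x$ with $\lambda(x)=q$ satisfies $h_*(q)\leq h(x)$, so $h_*(q)\leq \min\{h(x):\lambda(x)=q\}$, the minimum existing by compactness of the orbit and continuity of $h$. For $(d)$, strict monotonicity of $L$ in its first slot combined with $h_*(\lambda(x))\leq h(x)$ yields $L(h_*(\lambda(x)),\phi(\lambda(x)))\leq L(h(x),\Phi(x))$ for each $x\in E$; taking the infimum over $x\in E$ on the right, and noting that $\lambda(x)$ sweeps out $\lambda(E)$ as $x$ ranges over $E$ on the left, finishes the proof.

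The main obstacle is the commutativity conclusion in $(b)$: one has to deploy specifically the subdifferential representation (\ref{subdifferential representation}) --- rather than the arbitrary representation initially fixed --- to bring the subgradient $c$ into the inequality chain via $h^*$, and only then does axiom $(A2)$ close the loop. Part $(a)$, which pins $h^*$ on $\lambda(\V)$ down independently of representation, is what licenses this switch. Everything else reduces to routine bookkeeping with the orbit sandwich (\ref{lower and upper convex functions}), compactness of orbits in finite dimension, and Proposition \ref{elementary proposition}.
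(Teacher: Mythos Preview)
Your proposal is correct and follows essentially the same route as the paper's proof: part $(a)$ via $(A3)$ applied to each affine piece plus compactness of the orbit; part $(b)$ via Proposition~\ref{elementary proposition} on the sets $A$ and $B$, with the commutativity conclusion obtained by switching to the subdifferential representation (\ref{subdifferential representation}) --- exactly as the paper does, and you correctly flag this switch as the crux; parts $(c)$ and $(d)$ as immediate consequences of (\ref{lower and upper convex functions}). There is nothing to add.
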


\begin{proof}
\noindent $(a)$ Let $q\in \lambda(\V)$.
Then, the set  $\{x\in V:\lambda(x)=q\}$ is nonempty, closed and bounded (as $\lambda$ is continuous and norm preserving)
in $\V$. As $\V$ is  finite dimensional, this set is compact in $\V$. By the continuity of  $h$ we see that
$\max\Big \{h(x):\lambda(x)=q\Big \}$ exists.  Now, the inequality $h(x)\leq h^*(\lambda(x))$ implies that
$$\max\Big \{h(x):\lambda(x)=q\Big \}\leq h^*(q).$$
We recall the representations
$h(x)=\sup_{i\in I}\big [ \langle c_i,x\rangle+\alpha_i\big ]$ on $\V$
and $h^*(w):=\sup_{i\in I}\big [ \langle \lambda(c_i),w\rangle+\alpha_i\big ]$
on $\W$.
By $(A3)$ in Definition \ref{ftvn}, for every $i\in I$, there is an $x_i\in \V$ such that
$\lambda(x_i)=q$ and
$\langle \lambda(c_i),q\rangle=\langle c_i,x_i\rangle.$ Consequently,
$$\langle \lambda(c_i),q\rangle+\alpha_i\leq h(x_i)\leq \max\Big \{h(x):\lambda(x)=q\Big\}.$$
Taking the supremum over $i$, we get
$$h^*(q)\leq \max\Big \{h(x):\lambda(x)=q\Big \}.$$
We thus have the equality $h^*(q)=\max\Big \{h(x):\lambda(x)=q\Big \}$. This gives the 
 finiteness of $h^*(q)$ and shows that $h^*(q)$ depends on the values of $h$ alone and not on the representation of $h$. \\
$(b)$
Consider the two problems stated in Item $(b)$. Define the sets
$$A:=\Big \{ L\Big (h(x),\Phi(x)\Big ):x\in E\Big \}\quad\mbox{and}\quad
B:=\Big \{ L\Big (h^*(q),\phi(q)\Big ):q\in \lambda(E)\Big \}.$$
Because $L$ is increasing in the first variable and (\ref{lower and upper convex functions}) holds, we see that every element of $A$ is less than or equal to some element of $B$. Moreover, by Item $(a)$, for each $q\in \lambda(E)$, $h^*(q)=h(x)$ for some $x\in E$ with $\lambda(x)=q$. Thus $B\subseteq A$. 
We now use Proposition \ref{elementary proposition} to see  
$\sup\,A=\sup\,B$ which gives the equality 
$$\overline{\alpha}:=\sup_{E}L\Big (h,\Phi\Big )=\sup_{\lambda(E)} L\Big (h^*,\phi\Big )=:\overline{\beta}.$$ 
We also see the attainment of  one supremum implying that of the other. 
To see the commutativity part in $(b)$,  
suppose that $\sup_{E}L\Big (h,\Phi\Big )$ is attained at $\barx\in E$.
Let
$\overline{q}:=\lambda(\overline{x})\in \lambda(E)$. Then, by (\ref{lower and upper convex functions}),
$$\overline{\alpha}=L\Big (h(\overline{x}),\Phi(\overline{x})\Big )\leq L\Big (h^*(\overline{q}),\phi(\overline{q})\Big ) 
\leq \overline{\beta}=\overline{\alpha}.$$
Because $L$ is strictly increasing in the 
first variable and $\Phi(\overline{x})=\phi(\overline{q})$, we get
$$h(\overline{x})=h^*(\overline{q}).$$
Now, noting that on $\lambda(\V)$,  $h^*$ is independent of the representation of $h$, we consider the subdifferential representation (\ref{subdifferential representation}):
$$h(x)=\sup_{(c,\alpha)\in \Omega}\big [\langle c,x\rangle+\alpha\big ].$$
Then, for all $q\in \lambda(\V)$, 
$$h^*(q)=\sup_{(c,\alpha)\in \Omega}\big [ \langle \lambda(c),q\rangle+\alpha\big ].$$
Now, take any $c\in \partial h(\overline{x})$ and let $\alpha:=h(\overline{x})-\langle c,\overline{x}\rangle.$ Then, $(c,\alpha)\in \Omega$ and
$$h(\overline{x})=\langle c,\overline{x}\rangle+\alpha\leq \langle \lambda(c),\lambda(\overline{x})\rangle+\alpha\leq h^*(\overline{q})=h(\overline{x}).$$
Hence, $\langle c,\overline{x}\rangle =\langle \lambda(c),\lambda(\overline{x})\rangle$, proving the commutativity of $c$ and $\overline{x}$.\\
Finally, as  $L$ is increasing in the first variable, Items $(c)$ and $(d)$  are immediate from (\ref{lower and upper convex functions}).
\end{proof}

\gap

We highlight one special case.

\begin{corollary} \label{convex sup corollary}
{\it Suppose $(\V,\W,\lambda)$ is a FTvN system where $\V$ is finite dimensional. Let $E$ be a spectral set and $\Phi$ be a spectral function on $\V$.
Suppose  $h:\V\rightarrow \R$ is convex and $\barx$ is  an optimizer of the problem $\max_{E} (h+\Phi).$ Then, $\barx$ 
commutes with every element in the subdifferential of $h$ at $\overline{x}$.
}
\end{corollary}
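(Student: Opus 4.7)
The plan is to deduce this directly from Theorem \ref{convex theorem}, Item $(b)$, by a judicious choice of $L$. Specifically, I would take $L:\R\times \R\rightarrow \R$ defined by $L(t,s):=t+s$. This $L$ is clearly strictly increasing in the first variable (and also in the second), so it satisfies the standing hypothesis of the theorem. With this choice, the combination $L(h,\Phi)$ is exactly $h+\Phi$, so the problem $\max_{E}(h+\Phi)$ coincides with $\sup_{E}L(h,\Phi)$.

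Next, I would observe that the hypothesis of the corollary, namely that $\barx$ is an optimizer of $\max_{E}(h+\Phi)$, is precisely the attainment hypothesis in the supremum version of Theorem \ref{convex theorem}$(b)$ applied to this $L$. Invoking the conclusion of that theorem, we obtain that $\barx$ commutes with every element of $\partial h(\barx)$ in the FTvN system $(\V,\W,\lambda)$, which is the claim.

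I do not anticipate any substantive obstacle here, since the corollary is a straightforward specialization. The only minor point worth checking is that the intervals on which $L$ is defined can be taken as all of $\R$ so that the products $f(E)\times \Phi(E)$ (and the analogous range on $\W$) lie in the domain of $L$; this is automatic because $L(t,s)=t+s$ is defined everywhere on $\R\times \R$, and all the finiteness issues (such as $h^*$ being finite on $\lambda(\V)$) have already been handled in Item $(a)$ of Theorem \ref{convex theorem}. Thus the corollary follows immediately.
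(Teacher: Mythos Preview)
Your proposal is correct and is exactly the specialization the paper intends: the corollary is presented immediately after Theorem \ref{convex theorem} as ``one special case,'' obtained by taking $L(t,s)=t+s$. There is nothing to add.
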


\noindent{\bf Remarks.} 
Given the convex function $h$, the construction of $h_*$ is unsatisfactory for two reasons: First, when $h$ is the distance function $g$ considered in the previous section, $h_*$ may be different from $g_*$. Second, unlike for the linear and distance functions, the equality
$h_*(q)=\min\{h(x):\lambda(x)=q\}$ may not hold, see the example below. We can remedy this at this expense of losing convexity by defining:
$$h_{**}(q):=\min\{h(x):\lambda(x)=q\}\quad (q\in \lambda(\V)).$$
Then, one can show that $\inf_{E}L(h,\Phi)=\inf_{\lambda(E)}L(h_{**},\phi)$. We omit the details.
\\

\begin{example}
 Let $(\V,\W,\lambda)=(\R^2,\R^2,\lambda)$, where for any $q\in \R^2$, $\lambda(q)=q^\downarrow$ (the decreasing rearrangement of $q$, see Section 4.1 for details). Here, $\wl(q)=-[(-q)^\downarrow)]= q^\uparrow$ (the increasing rearrangement of $q$).
Now, let $c_1=(1,0)$ and $c_2=(-1,0)$ so that $c_1^\uparrow=(0,1)$ and $c_2^\uparrow=(-1,0)$. Consider the convex function
$$h(x):=\max\Big \{\langle c_1,x\rangle, \langle c_2,x\rangle\Big \}=|x_1|,$$
where $x=(x_1,x_2)\in \R^2$.
We have, for $w=(w_1,w_2)\in \R^2$,
$$h_*(w)=\max\Big \{\langle c_1^\uparrow,w\rangle, \langle c_2^\uparrow,w\rangle\Big \}=\max\Big\{w_2,-w_1\Big \}$$ and
$$h_{**}(q)=\min\{h(x):\lambda(x)=q\}=\min\{|q_1|,|q_2|\}\quad (q\in \lambda(\V)).$$
With $q=(1,-1)$, we have $\{x\in \R^2:\lambda(x)=q\}=\{(1,-1),(-1,1)\}$; so,
$\min\Big \{h(x):\lambda(x)=q\Big \}=1.$ However,
$h_*(q)=-1.$ This shows that the inequality in Item $(c)$ of the above theorem can be strict. 
We also note that $\min_{\R^2}h=0$, while $\inf_{\lambda(\R^2)}h_*=-\infty.$ We also observe that $h_{**}$ is nonconvex.  
\end{example}

\gap

We end this section by stating a result that is similar to Corollary \ref{convex sup corollary}, but dealing with the 
minimum of a convex function.

\begin{proposition}
{\it Suppose $(\V,\W,\lambda)$ is a FTvN system, where $\V$ is finite dimensional. Let 
 $E$ be a convex spectral set in  $\V$.
Suppose  $h:\V\rightarrow \R$ is convex and $\barx$ is  an optimizer of the problem $\min_{E} h.$ Then, $\barx$
commutes with $-c$ for some  element $c$ in the subdifferential of $h$ at $\overline{x}$.
}
\end{proposition}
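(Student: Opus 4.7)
The plan is to reduce this proposition to the variational inequality result in Theorem \ref{vi result}. The bridge is the standard first-order necessary condition for minimizing a convex function over a convex set.

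First, I would invoke the optimality condition. Since $h:\V\rightarrow \R$ is convex and finite valued on the finite dimensional space $\V$, it is continuous on all of $\V$, so the Moreau--Rockafellar sum rule applies to $h+\delta_E$ without any qualification, where $\delta_E$ is the indicator function of the convex set $E$. Hence $\barx\in \arg\min_{E}h$ is equivalent to
$$0\in \partial(h+\delta_E)(\barx)=\partial h(\barx)+N_E(\barx),$$
where $N_E(\barx)$ denotes the normal cone of $E$ at $\barx$. This yields some $c\in \partial h(\barx)$ with $-c\in N_E(\barx)$, which rewrites as
$$\langle c,x-\barx\rangle \geq 0\quad \mbox{for all }x\in E.$$

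Second, I would recognize this as a variational inequality. Define the constant map $G:\V\rightarrow \V$ by $G(y):=c$ for all $y\in \V$. The inequality above says precisely that $\barx$ solves VI$(G,E)$. Since $E$ is a spectral set, Theorem \ref{vi result} applies and immediately yields that $\barx$ commutes with $-G(\barx)=-c$ in the FTvN system $(\V,\W,\lambda)$, which is the desired conclusion.

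The main (and essentially only) delicate point is the validity of the optimality condition in the form $0\in \partial h(\barx)+N_E(\barx)$; this is classical and requires no constraint qualification here because $h$ has full effective domain and is continuous everywhere on $\V$. Everything else is just packaging: the constant-map trick turns a subdifferential containment into a variational inequality so that the commutation principle of Theorem \ref{vi result} can be invoked.
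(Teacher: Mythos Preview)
Your proof is correct and follows essentially the same approach as the paper's: both apply the subdifferential sum rule to $h+\delta_E$ (the paper writes $\partial\lchi(\barx)$ for what you call $N_E(\barx)$) to obtain $c\in\partial h(\barx)$ with $\langle c,x-\barx\rangle\geq 0$ for all $x\in E$, and then invoke the commutation principle. The only cosmetic difference is that you cite Theorem~\ref{vi result} via a constant-map VI, whereas the paper cites Corollary~\ref{linear sup-inf corollary} directly; since Theorem~\ref{vi result} is itself proved by that corollary, the arguments are identical in substance.
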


\begin{proof} Let $\lchi$ denote the indicator function of $E$ (so it takes the value zero on $E$ and infinity outside of $E$). Then, $\barx$ is  a (global) optimizer of the problem $\min (h+\lchi)$ and so
$$0\in \partial\,(h+\lchi)(\overline{x})=\partial\,h(\overline{x})+\partial\,\lchi(\overline{x}),$$
where the equality comes from the subdifferential sum formula (\cite{rockafellar}, Theorem 23.8).
Hence, there is a $c\in \partial\,h(\overline{x})$ such that $-c\in \partial\,\lchi(\overline{x}).$ 
This $c$ will have the property that $$\langle c,x-\overline{x}\rangle \geq 0\,\,\mbox{for all}\,\,x\in E,$$ that is,
$\overline{x}$ is a minimizer of the problem $\min\{\langle c,x\rangle:x\in E\}$. By Corollary \ref{linear sup-inf corollary} with $\phi=0$, $\overline{x}$ commutes with $-c$.
 \end{proof}

%%%%%%%%%%%%%%%%%%%%%%%%%%%%%%%%%%%%%%%%%%%%%%%%%%%%%%%%%%%%%%%%%%%%%%%%%%%%%%%%%%%%%%
\section{Euclidean Jordan algebras}
In this section, we show that every Euclidean Jordan algebra is a FTvN system and illustrate our previous results. We  start with some preliminaries. The Euclidean $n$-space $\Rn$ carries the usual inner product. 
For any $q\in \Rn$, we let $q^\downarrow$ denote the decreasing rearrangement of $q$ (that is,
$q^\downarrow_1\geq q^\downarrow_2\geq \cdots\geq q^\downarrow_n)$; for any $Q\subseteq \Rn$, we let $Q^\downarrow:=\{q^\downarrow:q\in Q\}$. The symbol  $\Sigma_n$ denotes the set of all permutation matrices on $\Rn$. For a set $S$ in $\Rn$, we write $\Sigma_n(S):=\{\sigma(s):\sigma\in \Sigma_n,\,s\in S\}$. We say that a set $Q$ in $\Rn$ is {\it permutation invariant} if $\sigma(Q)=Q$ for all $\sigma\in \Sigma_n$. (The word {\it symmetric} is also used in some literature.)\\

Let 
 $(\V,\circ, \langle\cdot,\cdot\rangle)$ denote a  {\it Euclidean Jordan algebra of rank $n$} \cite{faraut-koranyi}, 
where  $x\circ y$  and $\langle x, y\rangle$ denote, respectively,  
the Jordan product and inner product of two elements $x$ and $y$. It is known \cite{faraut-koranyi} 
that any Euclidean Jordan algebra is a direct product/sum
of simple Euclidean Jordan algebras and every simple Euclidean Jordan algebra is isomorphic one of five algebras,
three of which are the algebras of $n\times n$ real/complex/quaternion Hermitian matrices. The other two are: the algebra of $3\times 3$ octonion Hermitian matrices and the Jordan spin algebra.
By the spectral theorem \cite{faraut-koranyi}, every element $x$ in $\V$ has a decomposition
$x=q_1e_2+q_2e_2+\cdots+q_ne_n$, where $q_1,q_2,\ldots, q_n$ are  the
eigenvalues of $x$ and $\{e_1,e_2,\ldots, e_n\}$ is a Jordan frame. (The eigenvalues remain the same in any such representation.) Defining the sum of eigenvalues of $x$ as the trace of $x$, we note (the known fact) that the trace inner product $tr(x\circ y)$ is also compatible with the Jordan product. {\it Henceforth, we assume that the inner product in $\V$ is  this trace inner product}, that is,
$\langle x,y\rangle=tr(x\circ y).$
Working with this inner product allows us to say that each Jordan frame is orthonormal. 

For any $x\in \V$, we  let $\lambda(x):=(\lambda_1(x),\lambda_2(x),\ldots, \lambda_n(x))$ denote the vector of eigenvalues of $x$ written in the decreasing order. 
Then $\lambda:\V\rightarrow \Rn$ denotes the {\it eigenvalue map}. 
Given a Jordan frame ${\cal E}=\{e_1,e_2,\ldots, e_n\}$ in $\V$, we fix its enumeration/listing
and  define for any $q=(q_1,q_2,\ldots,q_n)\in \Rn$,
$$q*{\cal E}:=\sum_{i=1}^{n}q_ie_i.$$
We note that 
\begin{equation}\label{eigenvalue image}
\lambda(q*{\cal E})=q^\downarrow.
\end{equation}
A set $E$ in $\V$ is said to be a {\it spectral set} if it is of the form $\lambda^{-1}(Q)$ for some $Q\subseteq \Rn$. We state the following simple (easily verifiable) result.
\begin{proposition} \label{easy prop}
{\it Let $E=\lambda^{-1}(Q)$ for some $Q\subseteq \Rn$. Then the following statements hold:
\begin{itemize}
\item [$(i)$] $x\in E\Leftrightarrow \lambda(x)\in \lambda(E)$.
\item [$(ii)$] $\lambda(E)=Q\cap Q^\downarrow$.
\item [$(iii)$] $\lambda^{-1}(Q)=\lambda^{-1}(Q\cap Q^\downarrow)=\lambda^{-1}\Big (\Sigma_n(Q\cap Q^\downarrow)\Big )$.
\end{itemize}
}
\end{proposition}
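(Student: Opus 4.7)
The plan is to exploit one simple structural fact throughout: every vector of the form $\lambda(x)$ lies in $\Rn^\downarrow$, that is, $\lambda(x) = \lambda(x)^\downarrow$, and conversely, by the spectral theorem together with equation (\ref{eigenvalue image}), every $w \in \Rn$ written in decreasing order is realized as $\lambda(x)$ for some $x \in \V$ (take any Jordan frame $\mathcal{E}$ and set $x = w * \mathcal{E}$). These two observations reduce each item to a short set-theoretic manipulation.

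For $(i)$, the forward inclusion is immediate from the definition of the image. For the reverse, if $\lambda(x) \in \lambda(E)$ then $\lambda(x) = \lambda(y)$ for some $y \in E = \lambda^{-1}(Q)$; since $\lambda(y) \in Q$, this forces $\lambda(x) \in Q$, hence $x \in E$. For $(ii)$, to see $\lambda(E) \subseteq Q \cap Q^\downarrow$, take $w = \lambda(x)$ with $x \in E$: then $w \in Q$ by definition and $w = w^\downarrow$, so $w \in Q^\downarrow$ as well. Conversely, pick $w \in Q \cap Q^\downarrow$; membership in $Q^\downarrow$ means $w$ is already in decreasing order, so choosing any Jordan frame $\mathcal{E}$ and setting $x := w * \mathcal{E}$ gives $\lambda(x) = w^\downarrow = w \in Q$ by (\ref{eigenvalue image}), so $x \in E$ and $w \in \lambda(E)$.

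For $(iii)$, I would establish the chain of inclusions
\[
\lambda^{-1}(Q) \subseteq \lambda^{-1}(Q \cap Q^\downarrow) \subseteq \lambda^{-1}\bigl(\Sigma_n(Q \cap Q^\downarrow)\bigr) \subseteq \lambda^{-1}(Q).
\]
The middle inclusion is trivial since $Q \cap Q^\downarrow \subseteq \Sigma_n(Q \cap Q^\downarrow)$. For the first, if $x \in \lambda^{-1}(Q)$ then $\lambda(x) \in Q$, and since $\lambda(x) = \lambda(x)^\downarrow$, also $\lambda(x) \in Q^\downarrow$. For the closing inclusion, if $\lambda(x) = \sigma(s)$ for some $\sigma \in \Sigma_n$ and $s \in Q \cap Q^\downarrow$, I would apply decreasing rearrangement to both sides: the left side is unchanged because $\lambda(x)$ is already sorted, and the right side collapses to $s^\downarrow = s$ since $s \in Q^\downarrow$ is in decreasing order, yielding $\lambda(x) = s \in Q$.

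There is no real obstacle here; the only point that requires care is tracking how the identity $\lambda(x) = \lambda(x)^\downarrow$ is used to pass between $Q$, $Q^\downarrow$, and $\Sigma_n(Q \cap Q^\downarrow)$ without confusion. Everything else is bookkeeping on inclusions, and the existence half of $(ii)$ is the only step that invokes the spectral theorem via (\ref{eigenvalue image}).
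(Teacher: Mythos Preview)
Your proof is correct. The paper itself does not prove this proposition at all; it merely introduces it as a ``simple (easily verifiable) result'' and states it without argument. Your write-up supplies exactly the natural verification the paper leaves to the reader, and each step is sound: the forward/backward parts of $(i)$, both inclusions in $(ii)$, and the cyclic chain of inclusions in $(iii)$ all go through as you describe, resting on the two facts that $\lambda(x)$ is always decreasingly ordered and that every decreasingly ordered vector is $\lambda(x)$ for some $x$ via (\ref{eigenvalue image}).
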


Because of the third item above, {\it we can always write a spectral set as the $\lambda$-inverse image of permutation invariant set.}
A function $\Phi:\V\rightarrow \R$ is a spectral function if it is of the form $\Phi=\phi\,\circ \lambda$ for some $\phi:\Rn\rightarrow \R$. Note that we can always rewrite such a function as $\Phi=\phi_1\,\circ \lambda$, where $\phi_1:\Rn\rightarrow \R$ is permutation invariant, that is, $\phi_1(\sigma(q))=\phi_1(q)$ for all $\sigma\in \Sigma_n$ and $q\in \Rn$. 
In the case of a simple algebra, spectral sets and functions are precisely those that are invariant under automorphisms of $\V$ \cite{jeong-gowda-spectral set}. (An automorphism of $\V$ is a linear isomorphism of $\V$ that preserves the Jordan product.)\\

We say that elements $x$ and $y$ {\it operator commute} in $\V$ if there is a
Jordan frame ${\cal E}$ in $\V$ such that $$x=q*{\cal E}\quad\mbox{and}\quad y=p*{\cal E}$$
for some $q,p\in \Rn$. It is well-known that this is equivalent to the commutativity of the linear operators $L_x$ and $L_y$, where
$L_x(z)=x\circ z$, etc. We say that $x$ and $y$ {\it strongly operator commute}
(or said to be `simultaneously  order diagonalizable' \cite{lim et al} or said to have `similar joint decomposition' \cite{baes}) in $\V$ 
if there is a Jordan frame ${\cal E}$ such that
$$x=\lambda(x)*{\cal E}\quad\mbox{and}\quad y=\lambda(y)*{\cal E}.$$

The following result extends the Hardy-Littlewood-P\'{o}lya rearrangement inequality in $\Rn$ \cite{marshall-olkin} and Fan-Theobald trace inequality for real/complex Hermitian matrices \cite{fan,theobald} to (general) Euclidean Jordan algebras. One way of proving it to show that the result holds in a simple Euclidean Jordan algebra (see \cite{lim et al,gowda-tao}) and then use  the above mentioned Hardy-Littlewood-P\'{o}lya rearrangement inequality. For a direct proof, see \cite{baes}.

\begin{theorem}\label{baes}
{\it Let $\V$ be a Euclidean Jordan algebra carrying the trace inner product. Then, for  $x,y\in \V$, we have
$$\langle x,y\rangle \leq \langle \lambda(x),\lambda(y)\rangle$$ with equality if and only if $x$ and $y$ 
strongly operator commute.
}
\end{theorem}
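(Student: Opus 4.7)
My plan is to reduce to the simple case, exploit the fact that the Gram matrix of two Jordan frames is doubly stochastic, and then appeal to the Hardy-Littlewood-P\'olya (HLP) rearrangement inequality. Then I will handle the combination of components via HLP a second time, and finally address the equality condition.

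First I would write $\V = \V_1 \times \cdots \times \V_k$ as a direct product of simple Euclidean Jordan algebras, decompose $x = (x_1,\ldots,x_k)$ and $y = (y_1,\ldots,y_k)$, and recall that $\langle x,y\rangle = \sum_i \langle x_i,y_i\rangle$, while $\lambda(x)$ is the decreasing rearrangement of the concatenation of the $\lambda(x_i)$'s (similarly for $y$). Thus it suffices to prove (i) the simple-algebra inequality $\langle x_i, y_i\rangle \leq \langle \lambda(x_i), \lambda(y_i)\rangle$ in each $\V_i$, and (ii) the rearrangement step $\sum_i \langle \lambda(x_i), \lambda(y_i)\rangle \leq \langle \lambda(x), \lambda(y)\rangle$, which is the classical HLP inequality applied to the concatenated tuples.

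For (i), I would take spectral decompositions $x = \sum_i \lambda_i(x) e_i$ and $y = \sum_j \lambda_j(y) f_j$ in the simple algebra, so that
$$
\langle x,y\rangle \;=\; \sum_{i,j} \lambda_i(x)\,\lambda_j(y)\, \langle e_i, f_j\rangle \;=\; \lambda(x)^{\T} A\, \lambda(y),
$$
where $A = (a_{ij})$ with $a_{ij} = \langle e_i, f_j\rangle$. The key lemma I would verify is that $A$ is doubly stochastic: nonnegativity of $a_{ij}$ follows from the fact that in a formally real Jordan algebra the inner product of two idempotents is nonnegative (using $\langle e_i, f_j\rangle = \tr(e_i \circ f_j)$ and the positivity of squares), and the row/column sums equal $1$ because each Jordan frame sums to the unit $e$ and $\tr(e_i) = \tr(f_j) = 1$ in a simple algebra with the trace inner product. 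By the Birkhoff--von Neumann theorem, $A$ is a convex combination of permutation matrices, and HLP then yields $\lambda(x)^{\T} A\, \lambda(y) \leq \langle \lambda(x), \lambda(y)\rangle$ because both vectors are already in decreasing order.

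For the equality characterization, one direction is immediate: if $x$ and $y$ strongly operator commute via a common Jordan frame $\mathcal E$, then $\langle x,y\rangle = \sum_i \lambda_i(x)\lambda_i(y) = \langle \lambda(x),\lambda(y)\rangle$. The converse is the main obstacle: I would trace backward through both HLP applications. Equality in the simple-algebra step forces $A$ to be supported on permutations that are optimal for the sorted pairing, and within each eigenvalue block of $x_i$ (respectively $y_i$) I can then repartition/merge the primitive idempotents via the Peirce decomposition to align the two frames into a single frame that diagonalizes both $x_i$ and $y_i$ in order. Equality in the cross-component HLP step forces the optimal pairing of eigenvalues from different $\V_i$'s to respect an index rearrangement, which lets me splice the component frames into a single Jordan frame $\mathcal E$ on $\V$ realizing $x = \lambda(x)\ast \mathcal E$ and $y = \lambda(y)\ast \mathcal E$. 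The delicate bookkeeping in the presence of repeated eigenvalues (where the aligning permutations are not unique) is where I expect the bulk of the work to lie; this can be handled by a standard perturbation argument or by directly invoking the Peirce decomposition to regroup primitive idempotents within each eigenspace.
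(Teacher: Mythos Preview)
Your approach is essentially the one the paper indicates (but does not carry out in detail): the paper does not prove this theorem in-text but rather sketches ``show that the result holds in a simple Euclidean Jordan algebra \ldots\ and then use the Hardy--Littlewood--P\'olya rearrangement inequality,'' citing \cite{lim et al,gowda-tao} for the simple case and \cite{baes} for a direct proof. Your doubly-stochastic Gram-matrix argument for the simple case is exactly the standard route in those references, and your two-layer use of HLP (within each simple factor, then across factors) matches the paper's outline; the only part that genuinely requires care beyond your sketch is the equality analysis when eigenvalues repeat, which you correctly flag as the main obstacle.
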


\gap

This leads to the following result.

\begin{theorem}\label{eja is ftvn}
{\it Consider a Euclidean Jordan algebra of rank $n$ carrying trace inner product and let $\lambda:\V\rightarrow \Rn$ denote the eigenvalue map. Then,  the triple $(\V,\Rn,\lambda)$ is a FTvN system.
}
\end{theorem}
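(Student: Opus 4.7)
The plan is to verify the three axioms (A1), (A2), (A3) of Definition \ref{ftvn} directly from the spectral theorem for Euclidean Jordan algebras together with Theorem \ref{baes}. Axiom (A2) is precisely the inequality stated in Theorem \ref{baes}, so no additional work is required there.

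For (A1), I would invoke the spectral theorem to write any $x \in \V$ as $x = \sum_{i=1}^n \lambda_i(x) e_i$ for some Jordan frame $\{e_1,\ldots,e_n\}$. Under the trace inner product, a Jordan frame is orthonormal (each $e_i$ is a primitive idempotent with $\operatorname{tr}(e_i) = 1$, and $e_i \circ e_j = 0$ for $i \ne j$), so a direct computation yields
\[
\|x\|^2 = \operatorname{tr}(x \circ x) = \sum_{i=1}^n \lambda_i(x)^2 = \|\lambda(x)\|^2,
\]
giving (A1).

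For (A3), fix $c \in \V$ and $q \in \lambda(\V)$. By the spectral theorem applied to $c$, choose a Jordan frame $\mathcal{E} = \{e_1,\ldots,e_n\}$ with $c = \lambda(c) * \mathcal{E}$, i.e.\ with the eigenvalues of $c$ enumerated in decreasing order along $\mathcal{E}$. Since $q \in \lambda(\V)$, by the convention defining $\lambda$ its entries are already in decreasing order, so $q = q^\downarrow$. Define $x := q * \mathcal{E} = \sum_i q_i e_i$. Equation (\ref{eigenvalue image}) then gives $\lambda(x) = q^\downarrow = q$, and the orthonormality of $\mathcal{E}$ gives
\[
\langle c, x \rangle = \operatorname{tr}(c \circ x) = \sum_{i=1}^n \lambda_i(c)\, q_i = \langle \lambda(c), q \rangle = \langle \lambda(c), \lambda(x) \rangle,
\]
which is exactly (A3).

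There is no real obstacle: the only subtle point is recognising that $\lambda(\V)$ consists of vectors already sorted in decreasing order, so that the frame $\mathcal{E}$ diagonalising $c$ in decreasing order can be reused to build an $x$ whose eigenvalues are precisely $q$ in the prescribed order. This simultaneous decomposition of $c$ and $x$ on the same frame $\mathcal{E}$ is exactly the strong operator commutativity that Theorem \ref{baes} identifies as the equality case of (A2), and it delivers (A3) at once.
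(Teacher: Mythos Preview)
Your proposal is correct and follows essentially the same argument as the paper: verify $(A1)$ via the spectral decomposition and orthonormality of Jordan frames under the trace inner product, invoke Theorem~\ref{baes} for $(A2)$, and for $(A3)$ diagonalise $c$ in decreasing order on a frame $\mathcal{E}$ and set $x = q * \mathcal{E}$, using that $q \in \lambda(\V)$ is already sorted. Your write-up is in fact slightly more explicit than the paper's (e.g.\ spelling out why the frame is orthonormal and citing~(\ref{eigenvalue image})), but the ideas are identical.
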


\begin{proof}
 We verify conditions $(A1)-(A3)$ in Definition \ref{ftvn}. For any $x$, consider the 
spectral decomposition $x=q_1e_2+q_2e_2+\cdots+q_ne_n$, where $q_1,q_2,\ldots, q_n$ are  the
eigenvalues of $x$ and $\{e_1,e_2,\ldots, e_n\}$ is a Jordan frame.  By our assumption that $\V$ carries the trace inner product, the Jordan frame is orthonormal. Hence $||x||^2=\sum_{i=1}^{n}|q_i|^2=||\lambda(x)||^2$. This verifies $(A1)$. Condition $(A2)$ follows from Theorem \ref{baes}. To see $(A3)$, let $c\in \V$ and $q\in \lambda(\V)$. We write  the spectral decomposition of $c$ as $c=\lambda(c)*{\cal E}$ for some Jordan frame ${\cal E}$. Now, as the components of $q$ are decreasing, letting $x:=q*{\cal E}$ we see that $\lambda(x)=q$.
Since ${\cal E}$ is orthonormal, $\langle c,x\rangle =\langle \lambda(c),\lambda(x)\rangle$ and so, $(A3)$ is verified.
\end{proof}

\gap

The following result shows that strong operator commutativity in the Euclidean Jordan algebra $\V$ is equivalent to commutativity in the FTvN system $(\V, \Rn,\lambda)$.

\begin{proposition}\label{strong operator commutativity in eja}
{\it For elements $x$ and $y$ in  a Euclidean Jordan algebra $\V$ with trace inner product, the following are equivalent:
\begin{itemize}
\item [$(i)$] $x$ and $y$ strongly operator commute in $\V$.
\item [$(ii)$] $\lambda(x+y)=\lambda(x)+\lambda(y)$.
\item [$(iii)$] $\langle x,y\rangle=\langle \lambda(x),\lambda(y)\rangle$, that is, $x$ and $y$ commute in the FTvN system $(\V, \Rn,\lambda)$.
\end{itemize}
}
\end{proposition}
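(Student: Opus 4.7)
The plan is to chain two equivalences already at our disposal. First, by Theorem \ref{eja is ftvn}, the triple $(\V,\Rn,\lambda)$ is a FTvN system, so Proposition \ref{commutativity proposition} applies directly and yields the equivalence $(ii)\Leftrightarrow (iii)$: the equality $\lambda(x+y)=\lambda(x)+\lambda(y)$ is equivalent to commutativity in the FTvN system $(\V,\Rn,\lambda)$, which by definition is the statement $\langle x,y\rangle=\langle \lambda(x),\lambda(y)\rangle$.

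For the equivalence $(i)\Leftrightarrow (iii)$, the second half of Theorem \ref{baes} asserts precisely that equality $\langle x,y\rangle=\langle \lambda(x),\lambda(y)\rangle$ holds in the Fan--Theobald inequality if and only if $x$ and $y$ strongly operator commute in $\V$. Combining this with the equivalence above gives all three implications in a cycle, completing the proof.

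The only substantive content is that both of the cited results must be invoked; no direct Jordan-algebra manipulation is needed here. The one subtlety worth noting is that the definition of ``commute'' in a general FTvN system (Definition \ref{commute}) is formulated purely in terms of inner products, whereas strong operator commutativity in $\V$ is a structural statement about the existence of a common Jordan frame; Theorem \ref{baes} is exactly the bridge that translates between the two. Since everything has already been established, the proof reduces to a short citation argument.
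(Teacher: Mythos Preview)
Your proof is correct and follows essentially the same approach as the paper: both use Proposition \ref{commutativity proposition} (via Theorem \ref{eja is ftvn}) for $(ii)\Leftrightarrow(iii)$ and Theorem \ref{baes} for $(i)\Leftrightarrow(iii)$. The only cosmetic difference is that the paper additionally gives a direct one-line argument for $(i)\Rightarrow(ii)$ using the common Jordan frame, which is redundant once the other two equivalences are in place.
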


\begin{proof}
If $(i)$ holds, then we can write 
$x=\lambda(x)*{\cal E}$ and $y=\lambda(y)*{\cal E}$ for some Jordan frame ${\cal E}$. Then, $x+y=[\lambda(x)+\lambda(y)]*{\cal E}$ and (as components of $\lambda(x)+\lambda(y)$ are in  decreasing order) $\lambda(x+y)=\lambda(x)+\lambda(y)$. 
Thus, $(i)\Rightarrow (ii)$. \\
As $(\V, \Rn,\lambda)$ is a FTvN system, the equivalence of $(ii)$ and $(iii)$ comes from Proposition \ref{commutativity proposition}.
\\Finally, the equivalence of $(iii)$ and $(i)$ follows from Theorem \ref{baes}.
\end{proof}

We now describe two consequences of Theorem \ref{eja is ftvn}. Thanks to the spectral theorem, it is easy to see that an element $u$ in $\V$ is   an {\it idempotent}, that is $u^2=u$, if and only if zero and one are the only possible eigenvalues of $u$. For any natural number $k$, $1\leq k\leq n$, consider an idempotent $u$
 having $k$ nonzero eigenvalues (in which case, we say that $u$ has rank $k$). Then $\lambda(u)=(1,1,\ldots, 1,0,0,\ldots, 0)$ in $\Rn$ and
the $\lambda$-orbit $[u]$ equals $\mathcal{J}^k(\V)$, the set of all idempotents of rank $k$ in $\V$. For such a $u$, the statement
$\max\{\langle c,x\rangle:x\in [u]\}=\langle \lambda(c),\lambda(u)\rangle$ {\it extends (\ref{moldovan result}) from simple Euclidean Jordan algebras to general Euclidean Jordan algebras.
} \\
For the second consequence,  consider (\ref{sublinear}) and replace $c$ by the above $u$ to get the inequality
\begin{equation}\label{k inequality}
\sum_{i=1}^{k}\lambda_i(x+y)\leq \sum_{i=1}^{k}\lambda_i(x) +\sum_{i=1}^{k}\lambda_i(y).
\end{equation}
When $k=n$,
\begin{equation}\label{n equality}
\sum_{i=1}^{n}\lambda_i(x+y)=\langle x+y,e\rangle=\langle x,e\rangle+\langle y,e\rangle=\sum_{i=1}^{n}\lambda_i(x)+\sum_{i=1}^{n}\lambda_i(y),
\end{equation}
where $e$ denotes the unit element in $\V$. These two statements together say (by definition) that $\lambda(x+y)$ is {\it majorized} by $\lambda(x)+\lambda(y)$ \cite{marshall-olkin}.  While such a statement is known for simple Euclidean Jordan algebras (\cite{moldovan}, Theorem 19) and the result for general algebras  can be proved by elementary means, for lack of explicit reference, we  record this fact below using the standard notation for majorization. As we shall see in the section on hyperbolic polynomials, this is a particular case of a far reaching generalization due to Gurvits. 

\begin{theorem}\label{majorization theorem}
{\it In any Euclidean Jordan algebra $\V$, for any two elements $x$ and $y$,
$$\lambda(x+y)\prec \lambda(x)+\lambda(y).$$
}\end{theorem}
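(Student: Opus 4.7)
The plan is to read off the majorization directly from the two facts (\ref{k inequality}) and (\ref{n equality}) recorded in the lead-in to the theorem. Recall that, for $a,b\in \Rn$, one has $a \prec b$ precisely when $\sum_{i=1}^{k} a_i^\downarrow \leq \sum_{i=1}^{k} b_i^\downarrow$ for $1\leq k\leq n-1$, together with $\sum_{i=1}^{n} a_i = \sum_{i=1}^{n} b_i$, where $a^\downarrow$ denotes the decreasing rearrangement of $a$.

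The first step is to observe that both vectors appearing in the asserted majorization are already in decreasing order. The vector $\lambda(x+y)$ is so by the very definition of the eigenvalue map $\lambda$. For $\lambda(x)+\lambda(y)$, the coordinate-wise sum of two decreasing vectors is again decreasing, so $(\lambda(x)+\lambda(y))^\downarrow = \lambda(x)+\lambda(y)$. With these identifications in place, the partial-sum inequality required by $\prec$ is exactly (\ref{k inequality}) for $1\leq k\leq n-1$, and the equality of total sums at $k=n$ is exactly (\ref{n equality}). This yields $\lambda(x+y) \prec \lambda(x)+\lambda(y)$, as claimed.

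I do not anticipate any real obstacle. The substantive content is already packaged into the sublinearity inequality (\ref{sublinear}) specialized at a rank-$k$ idempotent $u$ (whose eigenvalue vector $(1,\ldots,1,0,\ldots,0)$ turns $\langle \lambda(c),\lambda(\cdot)\rangle$ into $\sum_{i=1}^{k}\lambda_i(\cdot)$), and into the trace identity $\tr(x+y) = \tr(x)+\tr(y) = \sum_i \lambda_i(x) + \sum_i \lambda_i(y)$. The only step needing a line of explanation is the ordering observation, which is what allows the partial-sum statements to be read directly as the majorization condition without any rearrangement.
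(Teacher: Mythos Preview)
Your proposal is correct and follows essentially the same approach as the paper: the argument in the paper amounts precisely to invoking (\ref{k inequality}) for the partial-sum inequalities and (\ref{n equality}) for the trace identity, then reading off majorization by definition. Your additional remark that both $\lambda(x+y)$ and $\lambda(x)+\lambda(y)$ are already decreasing---so no rearrangement is needed---is a nice explicit observation that the paper leaves implicit.
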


%%%%%%%%%%%%%%%%%%%%%%%%%%%%%%%%%%%%%%%%%%%%%%%%%%%%%%%%%%%%%%%%%%%%%%%%%%%%
\subsection{Some specialized results in Euclidean Jordan algebras}

Since every Euclidean Jordan algebra can now be regarded as a FTvN system, all the results of Section 3 could be 
stated for Euclidean Jordan algebras. 
Instead of repeating these, we now state some specialized results.

First consider the Euclidean Jordan algebra $\Rn$ (with the usual inner product and componentwise product). 
In this setting, for any $q\in \Rn$, $\lambda(q)=q^\downarrow.$ Also,  
as there is only one Jordan frame in $\Rn$ (up to permutation, namely, the 
standard coordinate basis),  any two elements in $\Rn$ operator commute, and {\it two vectors $p$ and $q$ in $\Rn$ strongly operator commute if and only if for some permutation matrix $\sigma$, $p=\sigma(p^\downarrow)$ and $q=\sigma(q^\downarrow).$} 
This simple observation will allow us to construct an example of an inner product space satisfying conditions $(A1)$ and $(A2)$ of Definition \ref{ftvn}, but not $(A3)$: In the Euclidean Jordan algebra (FTvN system) $\R^3$, consider the (sub)space $\Z$ generated by vectors $p=(3,2,1)$ and $q=(-1,0,0)$ and let $\mu$ denote the restriction of $\lambda$ to this subspace. Then, in the triple $(\Z,\Rn,\mu)$, conditions $(A1)$ and $(A2)$ hold but not $(A3)$. 
This example will also show that a subspace of an FTvN system need not be an FTvN system.\\

We now specialize  (\ref{sup-inf equality for linear functions}) for $\V=\Rn$, $\Phi=0$,
 and $f(x)=\langle p,x\rangle$ where $p\in \Rn$.
Let $Q\subseteq \Rn$ so that  $E:=\lambda^{-1}(Q)=\Sigma_n(Q\cap Q^\downarrow)$  and $\lambda(E)=Q\cap Q^\downarrow$. Then,
\begin{equation}\label{sup  in rn}
\sup\Big \{\langle p,q\rangle: q\in \Sigma_n(Q\cap Q^\downarrow)\Big \}=
\sup\Big \{\langle p^\downarrow,q\rangle: q\in Q\cap Q^\downarrow\Big \}.
\end{equation}
In particular, when $Q$ is permutation invariant, we have  $Q\cap Q^\downarrow =Q^\downarrow$ and 
$\Sigma_n(Q\cap Q^\downarrow)=Q$. So 
\begin{equation}\label{linear sup for pi set}
\sup\Big \{\langle p,q\rangle: q\in Q\Big \}=
\sup\Big \{\langle p^\downarrow,q\rangle: q\in Q^\downarrow\Big \}
\quad (Q\,\,\mbox{permutation invariant}).
\end{equation}
 
We can now combine (\ref{sup-inf equality for linear  functions})  with (\ref{sup in rn}) to get a statement in a Euclidean Jordan algebra $\V$: If $Q\subseteq \Rn$ and $E=\lambda^{-1}(Q)$ in $\V$, then for any $c\in \V$,
\begin{equation} \label{sup in eja}
\sup\Big \{\langle c,x\rangle: x\in \lambda^{-1}(Q)\Big \}=
\sup\Big \{\langle \lambda(c),q\rangle: q\in Q\cap Q^\downarrow\Big \};
\end{equation}
Moreover, when $Q$ is permutation invariant, thanks to (\ref{linear sup for pi set}),
\begin{equation}\label{three sups}
\sup\Big \{\langle c,x\rangle: x\in \lambda^{-1}(Q)\Big \}=
\sup\Big \{\langle \lambda(c),q\rangle: q\in Q^\downarrow\Big \}=\sup\Big \{\langle \lambda(c),q\rangle: q\in Q\Big \}.
\end{equation}
Analogous statements can be made for the infimum in place of the supremum.

Similarly, using (\ref{sup-inf equality for distance functions}), we have (in the setting of a Euclidean Jordan algebra $\V$),
\begin{equation} 
\inf\Big \{||c-x||: x\in \lambda^{-1}(Q)\Big \}=
\inf\Big \{||\lambda(c)-q||: q\in Q\cap Q^\downarrow\Big \},
\end{equation}
and when $Q$ is permutation invariant,
\begin{equation}\label{three infs} 
\inf\Big \{||c-x||: x\in \lambda^{-1}(Q)\Big \}=
\inf\Big \{||\lambda(c)-q||: q\in Q^\downarrow\Big \}=\inf\Big \{||\lambda(c)-q||: q\in Q\Big \}.
\end{equation}

As an illustration, consider the algebra $\Sn$ ($\Hn$). Here, two matrices $X$ and $Y$ operator commute if and only if $XY=YX$ or, 
equivalently, there exist an orthogonal (respectively, unitary) matrix $U$ and real diagonal matrices $D_1$ and $D_2$ such that $X=UD_1U^*$ and $Y=UD_2U^*$.
They strongly operator commute if and only if the above spectral representations hold when the diagonal vectors of 
$D_1$ and $D_2$ are, respectively, $\lambda(X)$ and $\lambda(Y)$. 
To see an explicit example in this setting, 
consider the problem mentioned in the Introduction: Find
$$\sup\big \{ \langle C,X\rangle: X\succeq 0,\, 1\leq \lambda_{max}(X)\leq 2\Big \},$$
where $C,X\in \Sn$, $X\succeq 0$ means that $X$ is positive semidefinite, and 
$\lambda_{max}(X)$ denotes the maximum eigenvalue of $X$. 
Let $Q:=\{q\in \Rn: q\geq 0,\,\,1\leq \max(q)\leq 2\}$, where $\max(q)$ denotes the maximum of the components of $q$. Clearly, $Q$ is permutation invariant and 
$\lambda^{-1}(Q)=\{X\succeq 0,\, 1\leq \lambda_{max}(X)\leq 2\}$. 
Now, (\ref{three sups}) applied to $\Sn$ and $Q$ gives 
$$\sup\Big \{ \langle C,X\rangle: X\in \lambda^{-1}(Q)\Big\}=\sup\Big \{
\langle \lambda(C),q\rangle:q\in Q^\downarrow\Big \}=\sup\Big \{
\langle \lambda(C),q\rangle:q\in Q\Big \}.$$
Here, the  set $Q^\downarrow=\{q=(q_1,q_2,\ldots, q_n)\in \Rn:\,q_1\geq q_2\geq \cdots\geq q_n\geq 0\,\,\mbox{and}\,\,1\leq q_1\leq 2\}$ is polyhedral and compact. So we have the attainment of supremum in both the problems and, moreover, knowing the components of $\lambda(C)$, we can compute this maximum.

\gap

To illustrate (\ref{three infs}), consider a Euclidean Jordan algebra $\V$ and 
let $E:=\lambda^{-1}(Q)$, where $Q$ is  a closed permutation invariant set in $\Rn$. Then, $E$ is a closed spectral set in $\V$, $\lambda(E)=Q\cap Q^\downarrow =Q^\downarrow$ is a closed set in $\Rn$, and 
for any $c\in \V$,
$$
\min\Big \{||c-x||: x\in \lambda^{-1}(Q) \Big \}=
\min\Big \{||\lambda(c)-q||: q\in Q^\downarrow\Big \}=\min\Big \{||\lambda(c)-q||: q\in Q\Big \}.$$
Such a result appears in Proposition 1.11 in \cite{sossa} (or, Proposition 10 in \cite{ramirez et al}). In a similar vein, we specialize (\ref{hausdorff equality}) to  permutation invariant sets $Q$ and $P$ in $\Rn$:
$$d_H\Big (\lambda^{-1}(Q),\lambda^{-1}(P)\Big )=d_H\Big (Q^\downarrow, P^\downarrow\Big )=d_H\Big (Q,P\Big ).$$ 
(It is interesting to observe that this, together with Proposition 1.1 in \cite{gowda-spectral connectedness}, shows that the multivalued map $\lambda^{-1}$ behaves like a linear isometry on the collection of all convex permutation invariant sets.)
\\

Our next result deals with the linear image of a spectral set. 
Suppose $Q$ is a permutation invariant set in $\Rn$. As observed in \cite{baes,jeong-gowda-spectral set,jeong-gowda-spectral cone,gowda-spectral connectedness}, many properties of $Q$ carry over to the spectral set
$\lambda^{-1}(Q)$. In particular,
{\it 
\begin{itemize}
\item [$(i)$] If $Q$ is compact/convex, then $\lambda^{-1}(Q)$ is compact/convex, see \cite{baes},
\item [$(ii)$] If $Q$ is connected, then  $\lambda^{-1}(Q)$ is connected, see \cite{gowda-spectral connectedness},
\item [$(iii)$] If $\V$ is simple and $Q\cap Q^\downarrow$ is connected, then $\lambda^{-1}(Q)$ is connected, see \cite{gowda-spectral connectedness}.
\end{itemize}
}  
\begin{theorem}
{\it Let $\V$ be a Euclidean Jordan algebra of rank $n$ with trace inner 
product. 
Suppose $Q$ is a compact permutation invariant subset of $\Rn$ and one of the following conditions holds:
$(i)$ $Q$ is connected or $(ii)$ $\V$ is simple and $Q\cap Q^\downarrow$ is connected. 
Then, for any $c\in \V$, 
$$\Big \{\langle c,x\rangle:x\in \lambda^{-1}(Q)\Big \}=[\delta,\Delta], $$ where 
$\delta=\langle\, \wl(c),\lambda(a)\rangle$ for some $a\in \lambda^{-1}(Q)$ that strongly operator commutes with $-c$ and
$\Delta=\langle \lambda(c),\lambda(b)\rangle$ for some $b\in \lambda^{-1}(Q)$ that strongly operator commutes with $c$.
}
\end{theorem}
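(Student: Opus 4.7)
The plan is to show that the image $\{\ip{c}{x}:x\in E\}$, where $E:=\lambda^{-1}(Q)$, is both compact and connected in $\R$, hence a closed bounded interval $[\delta,\Delta]$, and then identify the endpoints together with the commutativity structure via the optimization results of Section 3 and the characterization of commutativity from Proposition \ref{strong operator commutativity in eja}.

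First I would establish that $E$ is compact. Since $Q$ is compact (in particular bounded) and $\lambda$ is norm preserving by $(A1)$, every $x\in E$ satisfies $\norm{x}=\norm{\lambda(x)}\le \sup_{q\in Q}\norm{q}<\infty$, so $E$ is bounded; and since $\lambda$ is (Lipschitz) continuous by Proposition \ref{fts-basic properties}$(c)$, the set $E=\lambda^{-1}(Q)$ is closed. As $\V$ is finite dimensional, $E$ is compact. For connectedness, I would invoke the three listed consequences of the Baes/Jeong--Gowda spectral set results cited immediately before the theorem: under hypothesis $(i)$, $\lambda^{-1}(Q)$ is connected because $Q$ is connected; under hypothesis $(ii)$, $\V$ is simple and $Q\cap Q^\downarrow$ is connected, so again $\lambda^{-1}(Q)$ is connected. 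In either case, the continuous image $\{\ip{c}{x}:x\in E\}$ of the compact connected set $E$ under the linear functional $x\mapsto \ip{c}{x}$ is a compact connected subset of $\R$, that is, a closed bounded interval $[\delta,\Delta]$.

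Next I would identify $\Delta$ and $\delta$ using Corollary \ref{linear sup-inf corollary} with $\phi=0$, which yields
\[
\Delta=\sup_{x\in E}\ip{c}{x}=\sup_{q\in \lambda(E)}\ip{\lambda(c)}{q},\qquad
\delta=\inf_{x\in E}\ip{c}{x}=\inf_{q\in \lambda(E)}\ip{\wl(c)}{q}.
\]
By compactness of $E$, the supremum is attained at some $b\in E$ and the infimum at some $a\in E$. The attainment statement of Corollary \ref{linear sup-inf corollary} gives that $b$ commutes with $c$ and $a$ commutes with $-c$ in the FTvN system $(\V,\Rn,\lambda)$; by Proposition \ref{strong operator commutativity in eja}, these are exactly the assertions that $b$ strongly operator commutes with $c$ and $a$ strongly operator commutes with $-c$ in $\V$. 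Consequently $\Delta=\ip{\lambda(c)}{\lambda(b)}$ and $\delta=\ip{\wl(c)}{\lambda(a)}$ as required.

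I do not anticipate a serious obstacle; the theorem is essentially a careful packaging of machinery already developed. The only point that deserves a moment of verification is that under hypothesis $(ii)$ the cited connectedness conclusion is about $\lambda^{-1}(Q)$ itself rather than $\lambda^{-1}(Q\cap Q^\downarrow)$, but these two sets coincide by Proposition \ref{easy prop}$(iii)$, so there is no gap.
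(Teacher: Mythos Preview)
Your proposal is correct and follows essentially the same approach as the paper's proof: establish compactness and connectedness of $\lambda^{-1}(Q)$ (the paper simply cites the listed results, whereas you additionally spell out compactness via the norm-preserving and continuity properties of $\lambda$), conclude that the continuous linear image is a closed interval, and then invoke Corollary \ref{linear sup-inf corollary} together with Proposition \ref{strong operator commutativity in eja} to identify the endpoints and the commutativity structure. Your closing remark about $\lambda^{-1}(Q)=\lambda^{-1}(Q\cap Q^\downarrow)$ via Proposition \ref{easy prop}$(iii)$ is a helpful clarification that the paper leaves implicit.
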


\begin{proof}
 By results stated above, $\lambda^{-1}(Q)$ is compact and connected in $\V$. By the continuity of the function $x\rightarrow \langle c,x\rangle$, 
$\Big \{\langle c,x\rangle:x\in \lambda^{-1}(Q)\Big \}=[\delta,\Delta], $ where
$\delta$ and $\Delta$ are, respectively, the minimum and the maximum of
$\langle c,x\rangle$ over $\lambda^{-1}(Q)$. By our previous results, 
they must be of the form 
 $\delta=\langle\, \wl(c),\lambda(a)\rangle$ for some $a\in \lambda^{-1}(Q)$ that strongly operator commutes with $-c$ and
$\Delta=\langle \lambda(c),\lambda(b)\rangle$ for some $b\in \lambda^{-1}(Q)$ that strongly operator commutes with $c$.
\end{proof}

The following is a simple consequence of the above result. It extends a similar result of Fan, see Corollary 1.6 and Theorem 1.5 in
\cite{tam}.

\begin{corollary}
{\it Consider a simple algebra $\V$ with trace inner product. Then, for any 
$u,c\in \V$, 
\begin{equation}\label{fan type result}
\Big \{ \langle c,x\rangle: x\in [u]\Big \}=[\delta,\Delta],
\end{equation}
where $\delta=\langle\, \wl(c),\lambda(u)\rangle$ and $\Delta=\langle \lambda(c),\lambda(u)\rangle.$
}
\end{corollary}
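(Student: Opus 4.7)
The plan is to derive this corollary as a direct specialization of the preceding theorem with a cleverly chosen set $Q$. Specifically, I would take $Q:=\Sigma_n\bigl(\{\lambda(u)\}\bigr)$, the set of all permutations of the vector $\lambda(u)\in\Rn$. This $Q$ is finite, hence compact, and permutation invariant by construction. Moreover, since every permutation of $\lambda(u)$ has the same decreasing rearrangement $\lambda(u)$ itself, one has $Q^\downarrow=\{\lambda(u)\}$ and therefore
\[
Q\cap Q^\downarrow=\{\lambda(u)\},
\]
which is a singleton and so is connected. Because $\V$ is assumed simple, hypothesis $(ii)$ of the preceding theorem applies.

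Next I would verify that $\lambda^{-1}(Q)=[u]$. Indeed, $x\in\lambda^{-1}(Q)$ means $\lambda(x)$ is a permutation of $\lambda(u)$; but since $\lambda$ always takes values in the cone of decreasingly ordered vectors, the only such permutation that $\lambda(x)$ can equal is $\lambda(u)$ itself. Hence $\lambda(x)=\lambda(u)$, that is, $x\in[u]$, and the reverse inclusion is trivial.

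Applying the preceding theorem to this choice of $Q$ then yields
\[
\bigl\{\langle c,x\rangle:x\in[u]\bigr\}=[\delta,\Delta],
\]
where the extremal values are attained at certain $a,b\in[u]$ and are given by $\delta=\langle\wl(c),\lambda(a)\rangle$ and $\Delta=\langle\lambda(c),\lambda(b)\rangle$ together with corresponding strong operator commutativity conditions. The last simplification is immediate: since $a,b\in[u]$ we have $\lambda(a)=\lambda(b)=\lambda(u)$, and therefore $\delta=\langle\wl(c),\lambda(u)\rangle$ and $\Delta=\langle\lambda(c),\lambda(u)\rangle$, as claimed. I do not anticipate any genuine obstacle; the whole content of the corollary is packaged inside the previous theorem, and the only small check to make is that the connectedness hypothesis holds in the degenerate case of a single orbit, which it does because a singleton is connected.
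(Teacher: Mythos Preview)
Your proposal is correct and follows essentially the same approach as the paper: apply the preceding theorem with a set $Q$ whose $\lambda$-preimage is the orbit $[u]$, and invoke hypothesis $(ii)$ via simplicity of $\V$ and connectedness of the singleton $Q\cap Q^\downarrow=\{\lambda(u)\}$. The paper writes simply $Q:=\{\lambda(u)\}$, whereas you take the permutation-invariant hull $Q:=\Sigma_n(\{\lambda(u)\})$; your choice is arguably more careful since the theorem's hypotheses ask for $Q$ permutation invariant, but the two choices give the same $\lambda^{-1}(Q)=[u]$ and the same $Q\cap Q^\downarrow$, so the arguments are identical in substance.
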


\begin{proof}
 We let $Q:=\{\lambda(u)\}$. Then, condition $(ii)$ in the above theorem applies.
\end{proof}

We conclude this section with some remarks about Theorems  \ref{vi result} and \ref{differentiable opt problem in ftvn}. 
Specialized to a Euclidean Jordan algebra $\V$, Theorem \ref{vi result}
 says that if $E$ is a spectral set in $\V$ and $a$ solves the variational inequality problem VI$(G,E)$, then $a$ and $-G(a)$ strongly operator commute. This, in particular, implies that $a$ and $G(a)$ operator commute, thus yielding  
a result of Ram\'{i}rez, Seeger, and Sossa (\cite{ramirez et al}, Proposition 8).

Theorem \ref{differentiable opt problem in ftvn} stated in the setting of $\V$ gives the strong operator commutativity of 
$a$ and $-h^\prime(a)$. This, in particular, gives the operator commutativity of $a$ and $h^\prime(a)$. 
We note that if  one is concerned with just the operator commutativity of $a$ and $h^{\prime}(a)$, then,
the following result -- where no convexity assumptions are made -- can be used.
This result extends Theorem 1.2 in \cite{gowda-jeong} with a similar/modified proof. Here, weak spectrality refers to invariance under automorphisms of the $\V$, see \cite{gowda-jeong}. 

\begin{theorem}\label{ramirez et al}
{\it Let $\V$ be a Euclidean Jordan algebra  and suppose that $E$ is a (weakly) spectral set in $\V$ and $\Phi$ is
 a (weakly) spectral function on $\V$. Let $L$ (defined on a product of appropriate intervals in $\R$) be strictly increasing in the first variable and $h:\V\rightarrow \R$ be Fr\'{e}chet differentiable. If $a$ is a local optimizer of the problem $\min_{E} L(h,\Phi)$ or $\max_{E}L(h,\Phi)$, then $a$ and $h^{\prime}(a)$ operator commute in $\V$.
}
\end{theorem}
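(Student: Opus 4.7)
The plan is to follow the scheme of Theorem 1.2 in \cite{gowda-jeong}, modified only by the use of the strict monotonicity of $L$ in its first variable to reduce the optimization of $L(h,\Phi)$ to an optimization of $h$ alone. The core idea is to differentiate along one-parameter subgroups of $\Aut(\V)$ and to exploit the weak spectrality of $E$ and $\Phi$.

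First, I would fix an arbitrary derivation $D\in\operatorname{Der}(\V)$ (the Lie algebra of $\Aut(\V)$) and form the smooth curve $\gamma(t):=\exp(tD)(a)$, which lies in the orbit $\Aut(\V)\cdot a$ because each $\exp(tD)$ is a Jordan automorphism of $\V$. Weak spectrality of $E$ forces $\gamma(t)\in E$ for every $t$, and weak spectrality of $\Phi$ forces $\Phi(\gamma(t))=\Phi(a)$ for every $t$. Since $a=\gamma(0)$ is a local optimizer of $L(h,\Phi)$ on $E$, the scalar function $t\mapsto L\bigl(h(\gamma(t)),\,\Phi(a)\bigr)$ has a local optimum at $t=0$, and strict monotonicity of $L$ in its first argument (with the second held fixed) propagates this to a local optimum of $t\mapsto h(\gamma(t))$ at $t=0$. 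The Fr\'{e}chet differentiability of $h$ at $a$ together with the chain rule then yields
\[
\langle h'(a),\,D(a)\rangle \;=\; 0 \qquad \text{for every } D\in\operatorname{Der}(\V).
\]

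The remaining step is to upgrade this normality condition to operator commutativity of $a$ and $h'(a)$. I would use the Jordan-algebraic structural fact that in a Euclidean Jordan algebra $\operatorname{Der}(\V)$ is spanned by the inner commutators $[L_x,L_y]=L_xL_y-L_yL_x$ with $x,y\in\V$, so the displayed identity becomes $\langle h'(a),\,[L_x,L_y](a)\rangle=0$ for all $x,y\in\V$; that is, $h'(a)$ lies in the normal space to the orbit $\Aut(\V)\cdot a$ at the point $a$. The plan is then to invoke the identification—carried out in \cite{gowda-jeong} via Peirce decomposition with respect to a Jordan frame that diagonalizes $a$—of this normal space with the operator centralizer $\{b\in\V:\,L_aL_b=L_bL_a\}$ of $a$, which immediately yields operator commutativity of $a$ and $h'(a)$.

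The main technical obstacle is this last identification: transferring the infinitesimal condition $\langle h'(a),\,[L_x,L_y](a)\rangle=0$ for all $x,y\in\V$ to the algebraic relation $L_aL_{h'(a)}=L_{h'(a)}L_a$. This is where the fine structure of $\V$ intervenes, via Jordan frames and the Peirce decomposition with respect to $a$: inner-commutator derivations produce tangent vectors concentrated in the off-diagonal Peirce blocks of $a$, whereas elements of the operator centralizer of $a$ sit in the diagonal Peirce blocks, and these two subspaces are mutually orthogonal complements. Notably this part of the argument is purely algebraic and makes no use of convexity of $h$, $E$, or $\Phi$, which is exactly what permits Theorem \ref{ramirez et al} to drop the convexity hypotheses required in Theorem \ref{differentiable opt problem in ftvn}.
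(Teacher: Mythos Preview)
Your proposal is correct and matches the paper's indicated approach: the paper does not spell out a proof but states that the result ``extends Theorem 1.2 in \cite{gowda-jeong} with a similar/modified proof,'' and your sketch is exactly that modification---use the strict monotonicity of $L$ in the first variable to pass from a local optimum of $t\mapsto L(h(\gamma(t)),\Phi(a))$ to one of $t\mapsto h(\gamma(t))$, then invoke the derivation/orbit argument and the Peirce-theoretic identification of the normal space with the operator centralizer, precisely as in \cite{gowda-jeong}.
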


The following example shows that operator commutativity cannot be replaced by strong operator commutativity.

\gap

\begin{example} 
In the Euclidean Jordan algebra $\R^2$, let $E=\{(1,0),\,(0,1)\}$. For the function
$h(x,y):=\frac{1}{2}x^2-x+x(y^2+y)$, we have $h(1,0)=-\frac{1}{2}$ and $h(0,1)=0$. 
Also, $h^{\prime}(x,y)=(x-1+y^2+y,2xy+x)$. So, $h^{\prime}(1,0)=(0,1)$ and $h^{\prime}(0,1)=(1,0)$. We note that the elements $(1,0)$ and $(0,1)$ operator 
commute in $\R^2$, but not strongly. Thus, if  $a$ denotes either a minimizer or a maximizer of $h$ on $E$, then $a$ and $h^\prime (a)$ do not strongly operator commute.
\end{example}
 
%%%%%%%%%%%%%%%%%%%%%%%%%%%%%%%%%%%%%%%%%%%%%%%%%%%%%%%
\section{FTvN systems induced by hyperbolic polynomials}
Hyperbolic polynomials were introduced by G\aa rding \cite{garding} in connection with partial differential equations.
They have become  important in optimization due to their connection to interior point methods \cite{guler,renegar} and convex analysis \cite{bauschke et al}.  

Let $\V$ be a finite dimensional real vector space. With respect to some coordinate system on $\V$, let $p:\V\rightarrow R$ be a nonconstant polynomial, homogeneous of degree $n$, where $n$ is a natural number. We say that {\it $p$ is hyperbolic with respect to some $e\in \V$ if $p(e)\neq 0$ and for all $x\in \V$, the univariate polynomial
$t\rightarrow p(te-x)$ has only real roots.} 
We fix such a $p$ and consider, to each $x\in \V$, the vector $\lambda(x)$ in $\Rn$ whose entries are the  of roots  of $p(te-x)$ written in the decreasing order. Then the map $\lambda:\V\rightarrow \Rn$ has many interesting properties \cite{bauschke et al}. Assuming that $p$ is {\it complete}, that is, $\lambda(x)=0\Rightarrow x=0,$
one can define a norm on $\V$ by $||x||:=||\lambda(x)||$ (the latter norm is the Euclidean norm on $\Rn$) and an inner product on $\V$ by $\langle x,y\rangle:=\frac{1}{4}[||x+y||^2-||x-y||^2]$, see \cite{bauschke et al}. Then, for all $x,y\in \V$,
$$\langle x,y\rangle \leq \langle \lambda(x),\lambda(y)\rangle,$$
see \cite{bauschke et al}, Prop. 4.4.
In connection with the triple $(\V,\Rn,\lambda)$,  we have the following result.

\begin{proposition} (Proposition 5.3, \cite{bauschke et al})
{\it Assume that $p$ is complete and consider the associated norm/inner product on $\V$. Then, the following are equivalent:
\begin{itemize}
\item [$(i)$] $p$ is `isometric', that is, for all $y,z\in \V$, there is an $x\in \V$ such that $\lambda(x)=\lambda(z)$ and 
$\lambda(x+y)=\lambda(x)+\lambda(y).$
\item [$(ii)$] $\max_{\{x:\lambda(x)=\lambda(u)\}}\langle c,x\rangle=\langle \lambda(c),\lambda(u)$ for all $c,u\in \V$.
\end{itemize}
}\end{proposition}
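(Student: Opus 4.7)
The plan is to derive both implications directly from the two facts already in hand: norm preservation $\|\lambda(x)\|=\|x\|$ (by construction of the associated norm) and the FTvN inequality $\langle x,y\rangle\le\langle \lambda(x),\lambda(y)\rangle$ from Proposition 4.4 of \cite{bauschke et al}. The central identity driving everything is
\[
\|\lambda(x)+\lambda(y)\|^{2}-\|\lambda(x+y)\|^{2}\;=\;2\bigl[\langle \lambda(x),\lambda(y)\rangle-\langle x,y\rangle\bigr],
\]
which one obtains by expanding both sides and applying $\|x+y\|=\|\lambda(x+y)\|$, $\|x\|=\|\lambda(x)\|$, $\|y\|=\|\lambda(y)\|$. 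This bridges the ``equality in FTvN'' $\langle x,y\rangle=\langle \lambda(x),\lambda(y)\rangle$ and the ``norm identity'' $\|\lambda(x+y)\|=\|\lambda(x)+\lambda(y)\|$, and back.

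For $(i)\Rightarrow(ii)$, fix $c,u\in\V$. The FTvN inequality already gives $\langle c,x\rangle\le\langle \lambda(c),\lambda(x)\rangle=\langle \lambda(c),\lambda(u)\rangle$ whenever $\lambda(x)=\lambda(u)$, so the supremum is bounded above by $\langle \lambda(c),\lambda(u)\rangle$. Applying (i) with $y=c$, $z=u$ produces an $x$ on the $\lambda$-orbit of $u$ satisfying $\lambda(x+c)=\lambda(x)+\lambda(c)$. Squaring the norms of both sides and invoking $\|x+c\|=\|\lambda(x+c)\|$, all $\|\cdot\|^{2}$ terms cancel and one is left with $\langle c,x\rangle=\langle \lambda(c),\lambda(x)\rangle=\langle \lambda(c),\lambda(u)\rangle$, so the bound is attained.

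For $(ii)\Rightarrow(i)$, fix $y,z\in\V$ and apply (ii) with $c=y$, $u=z$. Since $\V$ is finite dimensional, the orbit $\{x:\lambda(x)=\lambda(z)\}$ is bounded (by norm preservation) and closed (since $\lambda$ is Lipschitz, cf.\ the Lipschitz estimate in \cite{bauschke et al}), hence compact, so the continuous linear functional $\langle y,\cdot\rangle$ attains its supremum at some $x$; thus $\langle y,x\rangle=\langle \lambda(y),\lambda(x)\rangle$. The central identity above then gives $\|\lambda(x+y)\|=\|\lambda(x)+\lambda(y)\|$. To upgrade this norm identity to the desired vector identity, apply the FTvN inequality twice:
\[
\langle x+y,x\rangle\le\langle \lambda(x+y),\lambda(x)\rangle,\qquad \langle x+y,y\rangle\le\langle \lambda(x+y),\lambda(y)\rangle.
\]
Adding these and using $\|x+y\|^{2}=\|\lambda(x+y)\|^{2}=\|\lambda(x)+\lambda(y)\|^{2}$ on the left, together with the Cauchy--Schwarz bound $\langle \lambda(x+y),\lambda(x)+\lambda(y)\rangle\le\|\lambda(x+y)\|\,\|\lambda(x)+\lambda(y)\|$ on the right, forces equality throughout; the saturation of Cauchy--Schwarz combined with $\|\lambda(x+y)\|=\|\lambda(x)+\lambda(y)\|$ then yields $\lambda(x+y)=\lambda(x)+\lambda(y)$ as vectors in $\Rn$.

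The main obstacle is the last step: boosting the scalar identity $\|\lambda(x+y)\|=\|\lambda(x)+\lambda(y)\|$ to equality of the two vectors. The Cauchy--Schwarz saturation argument is the cleanest route, but one must handle the degenerate case $\lambda(x+y)=0$ separately; by completeness of $p$ this forces $x+y=0$, hence $\lambda(x)+\lambda(y)$ has norm zero too, and the identity holds trivially. The rest of the argument is essentially bookkeeping with the two norm/inner product identities.
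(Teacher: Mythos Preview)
The paper does not supply a proof of this proposition; it is quoted verbatim from \cite{bauschke et al} and then immediately used to conclude that complete isometric hyperbolic polynomials yield FTvN systems. So there is no ``paper's own proof'' to compare against directly.

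That said, your argument is correct, and it is worth noting that it is essentially the same argument the paper gives in Section~2 for general FTvN systems, specialized to this setting. Your ``central identity'' and the implication $(i)\Rightarrow(ii)$ match the equivalence $(a)\Leftrightarrow(c)\Leftrightarrow(d)$ in the paper's Proposition~2.5 (the commutativity proposition), and your Cauchy--Schwarz saturation step upgrading $\|\lambda(x+y)\|=\|\lambda(x)+\lambda(y)\|$ to the vector identity $\lambda(x+y)=\lambda(x)+\lambda(y)$ is exactly the argument the paper gives in the proof of Theorem~2.3 (inequality~(\ref{CS})) combined with the $(c)\Rightarrow(d)$ step of Proposition~2.5. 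One small remark: in $(ii)\Rightarrow(i)$ you argue compactness of the orbit to secure attainment, but since the hypothesis $(ii)$ is already phrased with ``$\max$'' rather than ``$\sup$'', attainment is built into the assumption and the compactness digression is unnecessary (though harmless). Your handling of the degenerate case $\lambda(x+y)=0$ is also clean and matches the norm-equality reasoning.
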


Given that $\lambda$ is norm preserving and the inequality 
$\langle x,y\rangle \leq \langle \lambda(x),\lambda(y)\rangle$ holds for all $x,y\in \V$, Item $(ii)$ above proves (see Item $(a)$ in Proposition \ref{equivalence of linear and distance functions}) the following.

\begin{theorem}
{\it 
Assume that $p$ is complete and consider the induced triple $(\V,\Rn,\lambda)$. Then,
$p$ is isometric if and only if $(\V,\Rn,\lambda)$ is a FTvN system.
}
\end{theorem}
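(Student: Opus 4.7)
The plan is to reduce the theorem to the verification of condition $(A3)$ in Definition \ref{ftvn}, since the other two conditions $(A1)$ and $(A2)$ are already in hand from the preceding discussion: $(A1)$ holds by the very definition of the norm on $\V$ induced by a complete hyperbolic $p$ (namely $\|x\|=\|\lambda(x)\|$), and $(A2)$ is the cited Proposition 4.4 of \cite{bauschke et al}. Thus $(\V,\Rn,\lambda)$ is a FTvN system if and only if $(A3)$ holds, and the task becomes to show that $(A3)$ is equivalent to the isometric property $(i)$ in the proposition stated just above.

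For the forward direction, I would assume $p$ is isometric and verify $(A3)$ as follows. Given $c\in\V$ and $q\in\lambda(\V)$, pick any $z\in\V$ with $\lambda(z)=q$ and apply $(i)$ with $y:=c$ and that $z$; this produces an $x\in\V$ with $\lambda(x)=q$ and $\lambda(x+c)=\lambda(x)+\lambda(c)$. Squaring norms, using $(A1)$ to expand $\|x+c\|^2=\|x\|^2+\|c\|^2+2\ip{x}{c}$ and $\|\lambda(x)+\lambda(c)\|^2=\|\lambda(x)\|^2+\|\lambda(c)\|^2+2\ip{\lambda(x)}{\lambda(c)}$, the two equalities collapse to $\ip{c}{x}=\ip{\lambda(c)}{\lambda(x)}$, which is precisely $(A3)$. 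No invocation of Proposition \ref{commutativity proposition} is strictly needed here, only the polarization identity and $(A1)$.

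For the reverse direction, assume $(\V,\Rn,\lambda)$ is a FTvN system and verify the isometric property. Given $y,z\in\V$, apply $(A3)$ with $c:=y$ and $q:=\lambda(z)$, yielding an $x$ with $\lambda(x)=\lambda(z)$ and $\ip{x}{y}=\ip{\lambda(x)}{\lambda(y)}$. Since $(\V,\Rn,\lambda)$ is an FTvN system, Proposition \ref{commutativity proposition} applies and the commutativity identity $(a)$ forces $(d)$, namely $\lambda(x+y)=\lambda(x)+\lambda(y)$. This gives exactly the content of $(i)$.

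The only point requiring care, and the main conceptual obstacle, is that in the reverse direction the step from the scalar identity $\|\lambda(x+y)\|=\|\lambda(x)+\lambda(y)\|$ to the vector identity $\lambda(x+y)=\lambda(x)+\lambda(y)$ is genuinely nontrivial: it relies on the equality case of Cauchy--Schwarz combined with the sublinear bound $\ip{\lambda(x+y)}{\lambda(x)+\lambda(y)}\ge \|\lambda(x+y)\|^2$ from the sublinearity theorem, so this half of the proof must go through Proposition \ref{commutativity proposition} rather than through polarization alone. I would therefore explicitly flag at this point that the machinery of Section 2 (sublinearity plus the equivalences $(a)\Leftrightarrow(d)$) is what makes the reverse direction work, even though the forward direction is a direct computation.
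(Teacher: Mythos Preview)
Your argument is correct. The paper's own proof is a single sentence: it invokes Proposition 5.3 of \cite{bauschke et al} (stated just above the theorem) to get the equivalence of ``isometric'' with the max formula $(ii)$, and then notes that, once $(A1)$ and $(A2)$ are in hand, the max formula $(ii)$ is literally the defining property of a FTvN system (cf.\ Proposition \ref{equivalence of linear and distance functions}).

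Your route is slightly different in that you bypass the cited equivalence from \cite{bauschke et al} and instead argue the equivalence of ``isometric'' with $(A3)$ directly using the paper's own Section 2 machinery: polarization for the forward direction, and Proposition \ref{commutativity proposition} (the equivalence $(a)\Leftrightarrow(d)$) for the reverse. This is exactly the content captured abstractly in Proposition \ref{items equivalent to A3}(a), so your argument is self-contained within the paper and does not depend on the details of the proof in \cite{bauschke et al}. The trade-off is minor: the paper's one-line proof is shorter but outsources the work, while yours makes explicit that the isometric property is nothing more than $(A3)$ rewritten via commutativity.
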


We observe that the condition $\lambda(x+y)=\lambda(x)+\lambda(y)$ that appears in Item $(i)$ of the previous proposition is simply a restatement of the commutativity condition $\langle x,y\rangle=\langle \lambda(x),\lambda(y)\rangle$.
There are numerous examples of complete isometric hyperbolic polynomials, see \cite{bauschke et al}. We provide one more example.
\\
\begin{example} 
Consider a Euclidean Jordan algebra $\V$ of rank $n$ with the trace inner product. 
Then, for any $x\in \V$, the determinant of $x$ (written $\det x$) is the product of eigenvalues of $x$ \cite{faraut-koranyi}. 
Let $p(x):=\det x$. With $e$ denoting the unit element in $\V$, the roots of the univariate polynomial $t\rightarrow p(te-x)$ are precisely the eigenvalues of $x$. We have already seen (in the previous section) that 
$(\V,\Rn,\lambda)$ is a FTvN system. Hence the corresponding $p$ is isometric.
As $\lambda(x)=0\Rightarrow x=0$, this $p$ is also complete.
\end{example}

\noindent {\bf Remarks.} The problem of characterizing the `isometric' property of a complete hyperbolic polynomial seems open. 
However, the commutativity condition $\lambda(x+y)=\lambda(x)+\lambda(y)$ can be described as follows.  Based on the validity of the Lax conjecture \cite{lewis-lax}, Gurvits \cite{gurvits} has shown that in the canonical setting of $\V=\Rn$ and $p(e)=1$, 
for any two elements $x,y$, there exist real symmetric $n\times n$ matrices $A$ and $B$ such that 
$$\lambda(tx+sy)=\lambda(tA+sB),$$
for all $t,s\in \R$, where the right-hand side denotes the eigenvalue vector of a symmetric matrix. This result, in addition to showing  Lidskii type (majorization) results in the setting of  hyperbolic polynomials (\cite{gurvits}, Corollary 1.3) 
hence in all Euclidean Jordan algebras, also yields a description of commutativity: 
$$\lambda(x+y)=\lambda(x)+\lambda(y)\Longleftrightarrow \lambda(A+B)
=\lambda(A)+\lambda(B)\Longleftrightarrow A\,\mbox{and}\,B\,\mbox{strongly operator commute in}\,\Sn.$$ 
%%%%%%%%%%%%%%%%%%%%%%%%%%%%%%%%%%%%%%%%%%%%%%%%%%%%%%%
\section{Normal decomposition systems and Eaton triples}
Normal decomposition systems were introduced by Lewis \cite{lewis} to 
 unify various results of convex analysis. 
One key assumption in the definition of normal decomposition system (see below) is an inequality that is similar to 
the one that appears in Theorem \ref{baes}. Another feature is the use of orthogonal transformations 
in place of Jordan frames.

\begin{definition} \cite{lewis}\label{definition: nds}
{\it Let $\V$ be a real  inner product space, $\G$ be a closed subgroup of the orthogonal group of $\V$, and 
$\gamma : \V \rightarrow \V$ be a map satisfying the following conditions:
\begin{itemize}
        \item [$(a)$] $\gamma$ is $\G$-invariant, that is, $\gamma(Ax) = \gamma(x)$ for all $x \in \V$ and $A \in \G$.
        \item [$(b)$] For each $x\in \V$, there exists $A\in \G$ such that $x=A\gamma(x)$. 
        \item [$(c)$] For all $x,y\in \V$, we have $\langle x,y\rangle \leq \langle \gamma(x),\gamma(y)\rangle$.
\end{itemize}
Then, $(\V,\, \G,\, \gamma)$ is called a {\it normal decomposition system}. 
}\end{definition}
 
\gap

Items $(a)$ and $(b)$ in the above definition show that $\gamma^2=\gamma$ and $||\gamma(x)||=||x||$ for all $x$. 
We state a few relevant properties.

\begin{proposition} [\cite{lewis}, Proposition 2.3 and Theorem 2.4] \label{prop: lewis}
{\it Let $(\V,\G,\gamma)$ be a normal decomposition system. Then,
\begin{itemize}
\item [$(i)$] For any two elements $x$ and $y$ in $\V$, we have
        $$ \max_{A \in \G}\, \langle Ax,y\rangle = \langle \gamma(x),\gamma(y)\rangle. $$
        Also, $\langle x,y\rangle = \langle \gamma(x),\gamma(y)\rangle $  holds for two elements $x$ and $y$
if and only if there exists an $A \in \G$ such that $x = A\gamma(x)$ and $y = A\gamma(y)$.
\item [$(ii)$] The range of $\gamma$, denoted by  $F$, is a closed convex cone in $\V$.
\end{itemize}
}
\end{proposition}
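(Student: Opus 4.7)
The plan is to handle the two parts separately. For the max formula in part (i), I would first establish the upper bound by combining conditions (c) and (a): for any $A\in\G$, we have $\langle Ax,y\rangle \leq \langle \gamma(Ax),\gamma(y)\rangle = \langle \gamma(x),\gamma(y)\rangle$. For attainment, I would invoke condition (b) to pick $A_1,A_2\in\G$ with $x=A_1\gamma(x)$ and $y=A_2\gamma(y)$, then take $A:=A_2A_1^{-1}\in\G$, giving $Ax=A_2\gamma(x)$ and $\langle Ax,y\rangle = \langle A_2\gamma(x),A_2\gamma(y)\rangle = \langle \gamma(x),\gamma(y)\rangle$ by orthogonality of $A_2$. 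This step is routine once the structure is in place; it mirrors the argument for $(A3)$ in a FTvN system.

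The equality characterization is where the real obstacle sits. The backward direction $x=A\gamma(x)$, $y=A\gamma(y)\Rightarrow \langle x,y\rangle=\langle\gamma(x),\gamma(y)\rangle$ is immediate from orthogonality. The delicate point is the forward direction: condition (b) yields, for each of $x$ and $y$ individually, some element of $\G$ carrying its $\gamma$-image to it, but the two group elements need not coincide. My plan is to fix one choice $y=A_2\gamma(y)$ from (b), replace $x$ by $z:=A_2^{-1}x$ (noting $\gamma(z)=\gamma(x)$ by (a)), and observe that the hypothesis translates into $\langle z,\gamma(y)\rangle=\langle\gamma(z),\gamma(y)\rangle$. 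I would then apply (b) to $z$ to write $z=A_3\gamma(z)$ and try to show that one can upgrade $A_3$ to an element of $\G$ that stabilizes $\gamma(y)$, so that $A:=A_2A_3$ works for both. The hard part is precisely this matching step: exploiting the equality case of (c) to adjust $A_3$ within the stabilizer of $\gamma(y)$. If a direct construction proves elusive, I would fall back on a compactness argument (in the finite-dimensional setting $\G$ is compact) applied to the continuous function $A\mapsto\langle Ax,y\rangle$ on $\G$.

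For part (ii), I would first show $F=\gamma(\V)$ is a cone by proving $\gamma(\alpha x)=\alpha\gamma(x)$ for $\alpha\geq 0$; expanding $\|\gamma(\alpha x)-\alpha\gamma(x)\|^2$ and using norm preservation together with (c) applied to the pair $(\alpha x,x)$ forces this square to be nonpositive, exactly as in Proposition \ref{fts-basic properties}(a). For convexity, take $u=\gamma(x)$ and $v=\gamma(y)$ in $F$; using (c) and the idempotency $\gamma\circ\gamma=\gamma$ (which follows from (a) with any $A$ fixing $\gamma(x)$), I get $\langle c,u+v\rangle\leq \langle\gamma(c),u+v\rangle$ for every $c\in\V$. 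Setting $c=u+v$ and combining with the Cauchy--Schwarz inequality together with $\|\gamma(u+v)\|=\|u+v\|$ forces $\gamma(u+v)=u+v$, whence $u+v\in F$. Closedness follows from Lipschitz continuity of $\gamma$ (a consequence of (c) via the same trick as in Proposition \ref{fts-basic properties}(c)) plus finite-dimensional compactness: a convergent sequence $\gamma(x_n)\to w$ forces $\{x_n\}$ to be norm-bounded, hence (along a subsequence) convergent to some $x$ with $\gamma(x)=w$; in the general setting one passes to the completion as indicated at the end of Section~2.
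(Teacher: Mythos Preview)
The paper does not prove this proposition at all: it is quoted verbatim from Lewis \cite{lewis} (Proposition~2.3 and Theorem~2.4) and used as a black box. So there is no ``paper's own proof'' to compare against; you are supplying what the paper deliberately outsources.

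On your sketch itself: the max formula in (i) and the backward direction of the equality characterization are handled correctly, and your argument for (ii) is essentially sound (for closedness you can argue more simply: $\gamma$ is Lipschitz and $\gamma^2=\gamma$, so if $\gamma(x_n)\to w$ in $\V$ then $\gamma(w)=\lim\gamma(\gamma(x_n))=\lim\gamma(x_n)=w$, no subsequence extraction needed).

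The genuine gap is exactly where you flag it: the forward direction of the equality characterization. Your reduction to the situation $\langle z,\gamma(y)\rangle=\langle\gamma(z),\gamma(y)\rangle$ with $\gamma(y)\in F$ is fine, but the step ``upgrade $A_3$ to an element of $\G$ that stabilizes $\gamma(y)$'' is the entire content of Lewis's Theorem~2.4, and you have not supplied an argument. Your compactness fallback does not help here: compactness of $\G$ (in finite dimensions) tells you the maximum of $A\mapsto\langle Ax,y\rangle$ is attained, which you already proved constructively; it says nothing about an \emph{arbitrary} pair $(x,y)$ achieving equality. Lewis's actual proof is more delicate---one route is to study $\gamma(y+t\gamma(x))$ for large $t$ and exploit the cone structure of $F$ together with the equality hypothesis to force a common $A$---and this idea is absent from your plan. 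As written, the proposal for (i) is incomplete at precisely the step Lewis isolates as a separate theorem.
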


\gap 

{\it Eaton triples} were introduced and studied in \cite{eaton-perlman,eaton1,eaton2} 
from the perspective of  majorization techniques in probability. They were also extensively studied in the papers of Tam and Niezgoda, see the references. 

\begin{definition} \label{definition: eaton triple}
{\it Let $\V$ be a finite dimensional real  inner product space, $\G$ be a closed subgroup of the orthogonal group of $\V$, and
$F$ be  a closed convex cone in $\V$ satisfying  the following conditions:
\begin{itemize}
        \item [$(a)$] $Orb(x)\cap F\neq \emptyset$ for all $x\in \V$, where $Orb(x):=\{Ax:A\in \G\}$.
        \item [$(b)$] $\langle x,Ay\rangle \leq \langle x,y\rangle$ for all $x,y\in F$ and $A\in \G$.
\end{itemize}
Then, $(\V,\, \G,\, F)$ is called an {\it Eaton triple}.
}\end{definition}

It has been shown (see \cite{niezgoda-group majorization}, page 14) that  in an Eaton triple $(V,\, \G,\, F)$, $Orb(x)\cap F$ consists of exactly one element for each 
$x\in \V$. Defining $\gamma:\V\rightarrow \V$ such that $Orb(x)\cap F=\{\gamma(x)\}$, it has been observed 
that $(\V,\G, \gamma)$ is a normal decomposition system.
Also, given a finite dimensional normal decomposition system $(\V,\G, \gamma)$, with $F:=\gamma(\V)$, 
$(V,\, \G,\, F)$ becomes an Eaton triple. Thus, {\it finite dimensional normal decomposition systems are equivalent to Eaton triples} \cite{lewis,lewis3,niezgoda-commutation}.
 
While both appear in various matrix and Lie algebraic
settings \cite{tam}, in this paper, we state our results (only) for normal decomposition systems. 

\begin{theorem}
{\it Let $(\V,\G,\gamma)$ be a normal decomposition system and $\W:=span(\gamma(\V))$. Then,  $(\V,\W,\gamma)$ is a
 FTvN system.}
\end{theorem}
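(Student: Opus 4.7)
The plan is to verify axioms $(A1)$, $(A2)$, $(A3)$ of Definition~\ref{ftvn} directly, drawing on Definition~\ref{definition: nds} and Proposition~\ref{prop: lewis}. First I would note that $\W := \text{span}(\gamma(\V))$ is a linear subspace of $\V$ and hence a real inner product space with the inherited inner product, and $\gamma$ indeed maps $\V$ into $\W$. Axiom $(A2)$ is immediate: it is exactly condition $(c)$ of Definition~\ref{definition: nds}.

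For axiom $(A1)$, fix $x \in \V$. By condition $(b)$ of Definition~\ref{definition: nds}, there exists $A \in \G$ with $x = A\gamma(x)$. Since $A$ lies in the orthogonal group of $\V$, applying $A$ preserves the norm, giving $\|x\| = \|A\gamma(x)\| = \|\gamma(x)\|$. A useful byproduct of $(a)$ and $(b)$ is that $\gamma$ is idempotent: indeed, $\gamma(x) = \gamma(A\gamma(x)) = \gamma(\gamma(x))$ by $\G$-invariance. Equivalently, $\gamma(q) = q$ for every $q \in \gamma(\V)$.

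The heart of the argument is $(A3)$, and it is also the step I expect to be the main obstacle since it is the only axiom not essentially built into the definition of a normal decomposition system. Given $c \in \V$ and $q \in \gamma(\V)$, the strategy is to produce $x$ in the $\gamma$-orbit of $q$ that is ``aligned'' with $c$ by a single group element. Using condition $(b)$ of Definition~\ref{definition: nds}, choose $A \in \G$ so that $c = A\gamma(c)$, and set
\[
x := Aq.
\]
Then by $\G$-invariance of $\gamma$ and the idempotence noted above, $\gamma(x) = \gamma(Aq) = \gamma(q) = q$, so $x$ lies in the desired $\gamma$-orbit. Moreover, the same $A$ satisfies $x = Aq = A\gamma(x)$, so the pair $(c,x)$ meets the simultaneous decomposition criterion in the equality part of Proposition~\ref{prop: lewis}$(i)$. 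Invoking that criterion yields $\langle c, x \rangle = \langle \gamma(c), \gamma(x) \rangle$, which verifies $(A3)$ and completes the proof.
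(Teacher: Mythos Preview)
Your proof is correct and follows essentially the same outline as the paper: verify $(A1)$ from orthogonality of $\G$, $(A2)$ from condition~$(c)$, and $(A3)$ via Proposition~\ref{prop: lewis}. The one noteworthy difference is in the construction for $(A3)$. The paper writes $q=\gamma(u)$ and invokes the \emph{max} statement in Proposition~\ref{prop: lewis}$(i)$ to produce $A\in\G$ with $\langle c,Au\rangle=\langle\gamma(c),\gamma(u)\rangle$, then sets $x=Au$. You instead start from the decomposition $c=A\gamma(c)$ guaranteed by condition~$(b)$, set $x:=Aq$, and appeal to the equality criterion in Proposition~\ref{prop: lewis}$(i)$. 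Your route is in fact slightly more elementary: once you have $c=A\gamma(c)$ and $x=Aq=A\gamma(x)$ with the same orthogonal $A$, the equality $\langle c,x\rangle=\langle A\gamma(c),A\gamma(x)\rangle=\langle\gamma(c),\gamma(x)\rangle$ follows immediately from orthogonality, so you do not really need Proposition~\ref{prop: lewis} at all for this direction. The paper's route, by contrast, relies on the nontrivial attainment of the maximum.
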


\begin{proof}
We verify conditions $(A1)-(A3)$ in Definition \ref{ftvn}. The norm preserving property of $\gamma$ comes from Item $(b)$ in 
Definition \ref{definition: nds}. Thus, condition $(A1)$ holds. Because of Item $(c)$ in Definition \ref{definition: nds}, we have $(A2)$. Suppose $c\in \V$ and $q\in \gamma(\V)$. Let $q=\gamma(u)$. From Item $(i)$ in Proposition \ref{prop: lewis}, we have, for some $A\in \G$,
$\langle c,Au\rangle=\langle \gamma(c),\gamma(u)\rangle.$ Letting $x=Au$, we observe that $\gamma(x)=\gamma(Au)=\gamma(u)=q$ and so $\langle c,x\rangle=\langle \gamma(c),q\rangle$. This verifies condition $(A3)$.
\end{proof}

\gap

Now, let $(\V,\G,\gamma)$ be a normal decomposition system. Using the notation of Section 2,  
for any $u\in \V$,
$$[u]=\{x:\gamma(x)=\gamma(u)\}=\{Au: A\in {\cal G}\}=:\mbox{Orb}(u).$$
In view of the remarks made after Definition \ref{spectral set}, we see that  in the 
FTvN system $(\V,\W,\gamma)$, a set $E$ is spectral if and only if it is $\G$-invariant, that is, for all $A\in \G$, $A(E)\subseteq E$. Similarly, a function $\Phi:\V\rightarrow \R$ is spectral if and only  if it is $\G$-invariant, that is, for all $A\in \G$ and $x\in \V$, $\Phi(Ax)=\Phi(x)$. 
(In some literature, $\G$-invariant functions are called orbital functions \cite{tam-hill}.)\\

We recall the concept of  commutativity in a normal decomposition system.

\begin{definition} \cite{gowda-jeong} In a normal decomposition system $(\V,\G, \gamma)$, we say that $x$ and $y$ {\it commute } if there exists an $A \in \G$ such that $x = A\gamma(x)$ and $y = A\gamma(y)$.
\end{definition}

Analogous to Proposition \ref{strong operator commutativity in eja}, we have the following.

\begin{proposition}\label{commutativity in nds}
{\it For elements $x$ and $y$ in  a normal decomposition system $(\V, \G,\gamma)$, the following are equivalent:
\begin{itemize}
\item [$(i)$] $x$ and $y$ commute in the normal decomposition system $(\V, \G,\gamma)$.
\item [$(ii)$] $\gamma(x+y)=\gamma(x)+\gamma(y)$.
\item [$(iii)$] $\langle x,y\rangle=\langle \gamma(x),\gamma(y)\rangle$, that is, $x$ and $y$ commute in the FTvN system $(\V, \W,\gamma)$.
\end{itemize}
}
\end{proposition}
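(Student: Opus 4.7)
The plan is to route everything through the fact, already established in the preceding theorem, that $(\V,\W,\gamma)$ is a FTvN system, and thereby reduce the proposition to two earlier results: Proposition \ref{commutativity proposition} (the intrinsic characterizations of commutativity in a FTvN system) and Proposition \ref{prop: lewis} (the Lewis equality-case characterization).

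First, I would note that $(iii)$ is literally the definition (Definition \ref{commute}) of commutativity in the FTvN system $(\V,\W,\gamma)$. Since that triple is indeed a FTvN system, Proposition \ref{commutativity proposition} gives at once the equivalence of $(iii)$ with the identity $\gamma(x+y)=\gamma(x)+\gamma(y)$, which is exactly $(ii)$. This disposes of $(ii)\Leftrightarrow (iii)$ with no further work.

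The remaining step is $(i)\Leftrightarrow (iii)$. For this I would simply invoke the second half of Item $(i)$ of Proposition \ref{prop: lewis}: the equality $\langle x,y\rangle=\langle \gamma(x),\gamma(y)\rangle$ holds if and only if there exists $A\in \G$ with $x=A\gamma(x)$ and $y=A\gamma(y)$, which is precisely the definition of commutativity in a normal decomposition system.

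Since both implications are immediate from results cited verbatim, there is essentially no obstacle — the only care needed is to make explicit that the $\gamma$ appearing in the FTvN system is the same $\gamma$ as in the normal decomposition system, so that the two notions of commutativity coincide on the nose. The proof is thus a two-line chain: $(i)\Leftrightarrow (iii)$ by Proposition \ref{prop: lewis}(i), and $(iii)\Leftrightarrow (ii)$ by Proposition \ref{commutativity proposition} applied to the FTvN system $(\V,\W,\gamma)$.
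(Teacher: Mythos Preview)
Your proof is correct and uses the same two ingredients as the paper: Proposition~\ref{prop: lewis} for $(i)\Leftrightarrow(iii)$ and Proposition~\ref{commutativity proposition} for $(ii)\Leftrightarrow(iii)$. The paper additionally supplies a direct argument for $(i)\Rightarrow(ii)$ (writing $x=A\gamma(x)$, $y=A\gamma(y)$, using that $\gamma(\V)$ is a convex cone and that $\gamma^2=\gamma$), but as you implicitly observe, this step is redundant once the other two equivalences are in hand.
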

  
\begin{proof}
When $(i)$ holds, we can write
$x=A\gamma(x)$ and $y=A\gamma(y)$ for some $A\in \G$. 
Letting $z=\gamma(x)+\gamma(y)$, we have 
$x+y=A(\gamma(x)+\gamma(y))=Az$.  
Since  the range of $\gamma$ is a closed convex cone 
(see item $(ii)$ in Proposition \ref{prop: lewis}), we can write $z=\gamma(u)$ for some $u\in \V$. 
Then, $x+y=A\gamma(u)$.  As $\gamma^2=\gamma$, we have  
$\gamma(x+y)=\gamma(A\gamma(u))=\gamma(\gamma(u))=\gamma(u)=z=\gamma(x)+\gamma(y)$. This proves the implication 
$(i)\Rightarrow (ii)$. \\
The equivalence of $(ii)$ and $(iii)$ comes from Proposition 
\ref{commutativity proposition}.\\
Finally, the equivalence of $(iii)$ and $(i)$ follows from Proposition \ref{prop: lewis}.
\end{proof}

\gap

We now specialize some results of Section 3. Consider a normal decomposition system $(\V,\G,\gamma)$,  let $F:=\gamma(\V)$ 
(which is a closed convex cone) and $W:=span(F)$ ($=F-F$). Consider any $Q\subseteq \V$. As $\gamma^2=\gamma$, 
$$Q\cap F=Q\cap \gamma(Q),\,\,\gamma^{-1}(Q)=\gamma^{-1}(Q\cap F)\,\,\mbox{and}\,\,\gamma(\gamma^{-1}(Q))=Q\cap\gamma(Q).$$
From Corollary \ref{linear sup-inf corollary} with  $E=\gamma^{-1}(Q)$, $\Phi=0$, and $c\in \V$, we get
$$
\sup\Big \{\langle c,x\rangle: x\in \gamma^{-1}(Q)\Big \}=\sup\Big \{\langle \gamma(c),q\rangle: q\in Q\cap\gamma(Q)\Big \}.
$$
Moreover, attainment of supremum in one problem implies that in the other and the maximum value is given by $\langle \gamma(c),\gamma(a)\rangle$ for some $a\in E$ that commutes with $c$ in $(\V,\G,\gamma)$.
\\
Similarly, from Corollary \ref{distance sup-inf corollary}, we get 
$$\inf\Big \{||c-x||: x\in \gamma^{-1}(Q)\Big \}=\inf\Big \{||\gamma(c)-q||: q\in Q\cap\gamma(Q)\Big \}.$$
Moreover, attainment of infimum in one problem implies the attainment in the other, and the minimum value is given by
$\langle \wg(c),\gamma(a)\rangle$ for some $a\in E$ that commutes with $-c$ in $(\V,\G,\gamma)$.

Specialized, we can now recover the results of von Neumann, Chu and Driessel, Tam mentioned in the Introduction. 

\gap

The papers by Lewis, Eaton, Eaton and Perlman, Lim et al, Niezgoda,  and Tam (see the References) contain numerous examples of normal decomposition systems (Eaton triples) 
related to matrices,  Lie and Euclidean Jordan algebras. We specifically note that
the space $M_n$ of $n\times n$ complex matrices \cite{lewis} and any simple Euclidean Jordan algebra \cite{lim et al} are examples of normal decomposition systems.
We now provide an example of a FTvN system that is neither a  normal decomposition system nor a Euclidean Jordan algebra.
Let $\V$ be any real inner product space whose dimension is more than one. On $\V$, let  $S:\V\rightarrow \V$ be linear and
inner product preserving (so it is an  isometry, but need not be surjective).
We assume that $S$ is  different from the identity transformation.
Then, with $\gamma(x):=Sx$, the triple $(\V,\V,\gamma)$ is a FTvN system. We claim that this is not a normal decomposition system. Suppose, if possible,  $(\V,\V,\gamma)$ is a normal decomposition system so that there is a closed subgroup ${\cal G}$  of the group of orthogonal transformations on $\V$ satisfying Definition \ref{definition: nds}. Then $\gamma(Ax)=\gamma(x)$ for all $x\in \V$ and $A\in {\cal G}$; so, $SAx=Sx$ for all $x$. As $S$ is injective, $Ax=x$ for all $x\in \V$ implying that
 ${\cal G}$ consists only of the identity transformation. But then, by  condition $(b)$ in Definition \ref{definition: nds}, $x=\gamma(x)=Sx$. As $S$ is not the identity transformation, we have a contradiction. Thus, $(\V,\V,\gamma)$ is not a normal decomposition system.
Specializing, let $\V=\R^2$ and $S:\R^2\rightarrow \R^2$ be rotation through $90^\circ$ about the origin.
Then $(\R^2,\R^2,S)$ is a FTvN system. If this were of the form $(\V,\Rn,\lambda)$ for some Euclidean Jordan algebra, then, $\V=\R^2$, $n=2$, and $\lambda=S$. But in this setting, $\lambda^2=\lambda$. This implies that  $S^2= S$, which is false. Hence, $(\R^2,\R^2,S)$ is a FTvN system which is neither a normal decomposition system nor a Euclidean Jordan algebra.

%%%%%%%%%%%%%%%%%%%%%%%%%%%%%%%%%%%%%%%%%%%%%%%%%%%%%%%%%%%%%%%%%%%%%%%%%%%%%%%%%%%

%%%%%%%%%%%%%%%%%%%%%%%%%%%%%%%%%%%%%%%%%%%%%%%%%%%%%%%%%%%%%%%%%%%%%%%%%
\end{document}